\documentclass[pdflatex,sn-mathphys-num]{sn-jnl}

\usepackage{graphicx}
\usepackage{multirow}%
\usepackage{amsmath,amssymb,amsfonts}%
\usepackage{amsthm}%
\usepackage{mathrsfs}%
\usepackage[title]{appendix}%
\usepackage{xcolor}%
\usepackage{textcomp}%
\usepackage{manyfoot}%
\usepackage{booktabs}%
\usepackage{algorithm}%
\usepackage{algorithmicx}%
\usepackage{algpseudocode}%
\usepackage{listings}%
\usepackage{float} 
\usepackage{cleveref}
\usepackage[labelformat=simple]{subcaption} 
\usepackage{placeins}
\crefname{figure}{Figure}{Figure}
\Crefname{figure}{Figure}{Figure}

\newcommand{\vect}[1]{\boldsymbol{#1}}
\newcommand{\lambar}{\bar{\lambda}}
\newcommand{\phibar}{\bar{\phi}}
\newcommand{\abs}[1]{\lvert #1 \rvert}
\newcommand{\norm}[1]{\left\lVert#1\right\rVert}
\newcommand{\flow}{F}
\newcommand{\koop}{{K}_\flow}
\newcommand{\inner}[2]{\langle #1, #2 \rangle}

\newcommand{\vx}{\vect{x}}
\newcommand{\vy}{\vect{y}}
\newcommand{\vw}{\vect{w}}
\newcommand{\vv}{\vect{v}}
\newcommand{\vq}{\vect{q}}

\theoremstyle{thmstyleone}%
\newtheorem{prop}{Proposition}
\theoremstyle{definition}
\newtheorem{definition}{Definition}
\newtheorem{example}{Example}

\newtheorem{remark}{Remark}

\begin{document}

\title[On the algebra of Koopman eigenfunctions 
and on some of their infinities]{On the algebra of Koopman eigenfunctions \\
and on some of their infinities}


\author[1,2,3]{\fnm{Zahra} \sur{Monfared}}\email{zahra.monfared@iwr.uni-heidelberg.de}

\author[3]{\fnm{Saksham} \sur{Malhotra}}\email{saksham2196@gmail.com}

\author[3]{\fnm{Sekiya} \sur{Hajime}}\email{hajime.sekiya@fernuni-hagen.de}

\author[4]{\fnm{Ioannis} \sur{Kevrekidis}}\email{yannisk@jhu.edu}

\author*[3]{\fnm{Felix} \sur{Dietrich}}\email{felix.dietrich@tum.de}

\affil[1]{Interdisciplinary Center for Scientific Computing, University of Heidelberg, Germany}

\affil[2]{Department of Mathematics and Computer Science, University of Heidelberg, Germany}

\affil[3]{School of Computation, Information and Technology, Technical University of Munich, Germany \& MDSI \& MCML}

\affil[4]{Departments of Chemical and Biomolecular Engineering and of Applied Mathematics and Statistics, Johns Hopkins University, Baltimore, USA}


\abstract{For continuous-time dynamical systems with reversible trajectories, the nowhere-vanishing eigenfunctions of the Koopman operator of the system form a multiplicative group.
Here, we exploit this property to accelerate the systematic numerical computation of the eigenspaces of the operator. Given a small set of (so-called ``principal'') eigenfunctions that are approximated conventionally, 
we can obtain a much larger set by constructing polynomials of the principal eigenfunctions.
This enriches the set, and thus allows us to more accurately represent application-specific observables. 
Often, eigenfunctions exhibit localized singularities (e.g. in simple, one-dimensional problems with multiple steady states) or extended ones (e.g. in simple, two-dimensional problems possessing a limit cycle, or a separatrix); 
we discuss eigenfunction matching/continuation across such singularities.
By handling eigenfunction singularities and enabling their continuation, our approach supports learning consistent global representations from locally sampled data. This is particularly relevant for multistable systems and applications with sparse or fragmented measurements.}

\keywords{Nonlinear dynamical systems, Koopman operator, Koopman eigenfunctions, Singularities, Data-driven modeling.}



\maketitle

\section{Introduction}
Much research on Koopman operator approximation for dynamical systems focuses on single basins of attraction around fixed points~\cite{mauroy-2013,mauroy2020koopman,kaiser2021data}, something that is exploited, at least locally, in the celebrated Hartman-Grobman theorem~\cite{hartman-1960,grobman-1959}; for more recent results extending this to the entire basin of attraction, see \cite{mezic-2019b,kvalheim-2025}.
This is not surprising, because the spectrum of the Koopman operator around hyperbolic fixed points can be characterized using the relation to the local linearization around the fixed point~\cite{applied-koopmanism:2012}.
However, even seemingly simple systems in one dimension with multiple steady states have complicated Koopman eigenfunction structure. Consider, for example, the following system defined through an ordinary differential equation with a polynomial vector field,
\begin{equation}\label{eq:motivating example system}
    \dot{x}=(x-a)(x-b)(x-c),\ x\in\mathbb{R}.
\end{equation}
Figure~\ref{fig:figure1 1d system} shows the three steady states of the system (\Cref{eq:motivating example system} for the case $a=-1,b=0,c=3$):
It also shows the vector field itself, as well as three Koopman eigenfunctions, whose selection we now discuss.
\begin{figure}[ht]
    \centering
    \includegraphics[width=.9\linewidth]{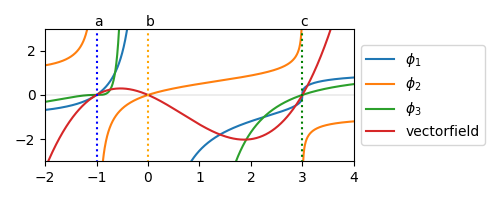}
    \caption{Three Koopman eigenfunctions $\phi_1,\phi_2,\phi_3$ for a system with vector field $\dot{x}=(x-a)(x-b)(x-c)$, with $a=-1$, $b=0$, and $c=3$. The color of the dashed lines at the steady states indicates where the correspondingly colored eigenfunction is zero.}
    \label{fig:figure1 1d system}
\end{figure}
Let us concentrate at the interval between $a$ and $b$. Using the linearization around steady state $a$, and solving the Koopman PDE $\langle\nabla \phi , \dot{x}\rangle=\lambda \phi$, results in the blue eigenfunction $\phi_1$; we can pin, without loss of generality, the derivative of $\phi_1$ at $x=a$ to an arbitrary value; this is just a scaling to select a single eigenfunction among all acceptable scalings.
This eigenfunction helps us describe trajectories of the dynamical system between $a$ and $b$.
The same trajectories between $a$ and $b$ can also be described using the eigenfunction $\phi_2$, similarly constructed by focusing on the steady state $b$ and its linearization eigenvalue. Crucially, the two eigenfunctions are simple transformations of each other: $\phi_1 = \phi_2 ^{\lambda_1/\lambda_2}$. 
This is not surprising, as we will discuss below (\Cref{prop: multiplicative_prop}): Powers of Koopman eigenfunctions are also Koopman eigenfunctions (a well-known fact, see~\cite{applied-koopmanism:2012}).

The second important point to notice is that $\phi_1$ asymptotes to infinity at $b$, while $\phi_2$ asymptotes to infinity at $a$; so, the power relation between the two should only be valid in the open interval $(a,b)$, where they are both finite (for details we refer to the concept of ``open''~\cite{mezic-2019b} or, similarly, ``primary'' eigenfunctions~\cite{bollt-2021a}).
Clearly, $\phi_1$ is also defined in $(-\infty, a)$, while $\phi_2$ is also defined in $(b,c)$. Using the power relation $\phi_1 = \phi_2 ^{\lambda_1/\lambda_2}$ we can now extend (in the spirit of analytic continuation) $\phi_2$ even to the left of $a$, and $\phi_1$ even to the right of $b$, up to $c$, where $\phi_2$ asymptotes to infinity. 
The same process allows us to realize that $\phi_3$ (computed based on the eigenvalue of the steady state $c$ and the solution of the Koopman PDE in the interval $(b,c)$) can be transformed in that same interval to $\phi_2$, using the ratio of $\lambda_2$ and $\lambda_3$ as the requisite power. 
And, finally, the same extension argument can be used to extend each of these eigenfunctions over the entire real line, with the possible exception of the singularities at the various steady states.

In a sense, we can talk about \textit{a single, unique Koopman eigenfunction over the entire domain}. The power transformation we discussed generates the other ones, modulo the (occasional) point-wise singularities.
Both the items we discussed, the power transformation and the singularities, will naturally play a role in computing the eigenfunctions; that is the subject of the paper.
In the first part of the paper we focus on the power relation between eigenfunctions (separating it from the singularities) and only discuss \Cref{prop: multiplicative_prop} and how it can be used to generate new eigenfunctions.


The paper is organized as follows.
In \Cref{sec:intro to computation}, we motivate how the group structure of the Koopman eigenfunctions can be used to systematize and speed up eigencomputations.
In \Cref{sec:related work}, we provide an overview of the Koopman operator framework and review various existing approximation algorithms. \Cref{sec:mathematical framework} focuses on the methodology for extending the computed set of eigenfunctions, with a detailed analysis presented in  \Cref{sec:algorithms for operator eigenfunctions}.
Then, in \Cref{sec:numerical examples}, we illustrate our approach through several numerical examples, showcasing the construction of an expanded set of eigenfunctions. We also discuss how to approximate eigenfunctions with singularities, which is necessary for, e.g., systems with transients to multiple attractors. We further discuss how to consistently extend them across the singularities and how, when possible, to transform between them despite the singularities.
Finally, in \Cref{sec:discussion}, we explore potential extensions and use cases of our work.

\section{Computation of Koopman eigenfunctions}\label{sec:intro to computation}

Numerical computation of the spectral decomposition of the Koopman operator from observation data is an important yet challenging problem~\cite{edmd:2015,li-2017a,schmid-2022,colbrook-2024a}.
The main goal of Koopman operator approximation methods for dynamical systems with pure point spectrum is the accurate representation of a sufficiently large set of eigenfunctions. In the ideal case, the observables of interest lie in the span of these eigenfunctions. Then, prediction of future values of these observables is trivial and only amounts to multiplication.

The notion of ``principal'' eigenfunctions has been proposed~\cite{mezic-2017,bollt-2021a}:
a minimal set of functions, such that the entire eigenspace of the operator can be constructed by systematically combining them (e.g., taking integer-valued powers).

A prominent approximation algorithm, mainly for large-scale linear systems, is the Dynamic Mode Decomposition~\cite{dmd-schmid:2010} (DMD). Its extension~\cite{edmd:2015} 
(EDMD) approximates the spectral properties from the action of the operator on a larger set of functions. Many variants of these algorithms are available, see for example the recent work for ergodic systems~\cite{mpdmd:2023}.
Yet, most of these algorithms only employ the linearity of the operator in order to approximate a matrix representation of it: its own approximate Koopman matrix. 
This means that the chosen dictionary must be expressive enough to represent many Koopman eigenfunctions, so that the action of the operator on the dictionary does not deviate too much from the finite-dimensional dictionary subspace.

In this paper, we propose to employ another property of the Koopman operator, which is not common to all linear operators: the group structure of its eigenfunctions~\cite{mohr-2014}.
Given any sufficiently well-approximated eigenfunction of the operator, (even negative) integer powers of the function are also eigenfunctions. This is true even if these powers cannot be represented in the original dictionary. It is therefore possible to derive eigenfunctions that cannot be well approximated  using the initial Koopman matrix representation. As a motivating example (from~\cite{kaiser2021data}),
consider a nonlinear ODE system of the form
\begin{align}\label{nonlinear-sys}
      \begin{cases}   
        \dot{x_1} = a x_1, \\[1ex]
        \dot{x_2} = b \big(x_2 - x_1^2 \big).
    \end{cases}
\end{align} 
By an appropriate choice of observable functions, this system can be embedded in a higher-dimensional space where the dynamics become linear, although for a general nonlinear system it is not always possible to achieve linearization in finite dimensions. For instance, for $\vy=(y_1, y_2, y_3) = (x_1, x_2, x_1^2)$, \eqref{nonlinear-sys} can be represented as a 3D linear system that remains closed under the action of the Koopman operator. 
However, to better illustrate our point with this example, we expand it using more observable functions $\vy=(y_1, y_2, y_3, y_4, y_5) = (x_1, x_2, x_1^2, x_1 \, x_2, x_1^
3)$ for which system \eqref{nonlinear-sys} will be transformed into a 5D linear system:
\begin{align}\label{5dlinear-sys}
      \begin{cases}   
        \dot{y_1} = a y_1 \\[1ex]
        \dot{y_2} = b \big(y_2 - y_3 \big)
        \\[1ex]
        \dot{y_3} = 2 a y_3 
        \\[1ex]
        \dot{y_4} = (a+b)y_4 - b y_5 
        \\[1ex]
        \dot{y_5} = 3a y_5
    \end{cases}.  
\end{align} 
 The flow map $\flow^{\Delta t}:\mathcal{M} \longrightarrow \mathcal{M} $ is given by 
\begin{align}
 \flow^{\Delta t} (\vy_0) \, = \, \exp{\bigg(\begin{pmatrix}
a & 0 & 0 & 0 & 0 \\
0 & b & -b & 0 & 0 \\
0 & 0 & 2a & 0 & 0 \\
0 & 0 & 0 & a+b & -b \\
0 & 0 & 0 & 0 & 3a
\end{pmatrix} \Delta t \bigg)} \vy_0   \, = \,  \exp{\big(A \, \Delta t \big)} \vy_0  ,
\end{align}
where $ \vy_0 = \vy(0)$. 
The matrix $A$ represents the finite-dimensional Koopman generator associated with the chosen observable vector $\vy$. 
Indeed, the observables evolve according to the linear system
\begin{align}
\dot{\vy}=A \, \vy,
\end{align}
The discrete-time Koopman operator over a sampling interval $\Delta t$ is then given by
\begin{align}
K = e^{A\Delta t}.
\end{align}
If $\vw$ is a left eigenvector of $A$ satisfying
\begin{align}
\vw^T \, A=\lambda \vw^T,
\end{align}
then the function
\begin{align}
\phi(\vy)=\langle \vy, \vw \rangle = \vw^T \, \vy
\end{align}
satisfies
\begin{align}
 \frac{d}{dt}\phi(\vy)=  \langle\nabla \phi , \dot{\vy}\rangle = 
 \vw^T  A \vy  = \lambda\phi(\vy).
\end{align}
Therefore, $\phi(\vy)$ is a Koopman eigenfunction associated with the eigenvalue $\lambda$. \\

The left eigenvectors of the matrix $A$ are
\begin{align}\nonumber
 \vw_1 &\, = \, (1 \, \, \,  0 \, \, \, 0 \, \, \, 0 \, \, \, 0)^T \\\nonumber
 \vw_2 &\, = \, (0 \, \, \, 0 \, \, \, 1 \, \, \, 0 \, \, \, 0)^T  \\\nonumber
 \vw_3 &\, = \, (0 \, \, \, 0 \, \, \, 0 \, \, \, 0 \, \, \, 1)^T \\\nonumber
 \vw_4 &\, = \, (0 \, \, \, \, \frac{2a-b}{b} \, \, \, \, 1 \, \, \, 0 \, \, \, 0)^T \\
 \vw_5 &\, = \, (0 \, \, \, 0 \, \, \, 0 \, \, \, \, \frac{2a-b}{b} \, \, \, \, 1)^T,
\end{align}
and hence the eigenfunctions of the associated Koopman operator are
\begin{align}\nonumber
  \phi_1(\vy) &\, = \, \langle \vy, \vw_1 \rangle = y_1, 
  \\[1ex]\nonumber
  \phi_2(\vy) &\, = \, \langle \vy, \vw_2 \rangle = y_3 = y_1^2 = \phi_1^2(\vy)
  \\[1ex]\nonumber
  \phi_3(\vy) &\, = \, \langle \vy, \vw_3 \rangle = y_5 = y_1^3  = \phi_1^3(\vy)
  \\[1ex]\nonumber
  \phi_4(\vy) &\, = \, \langle \vy, \vw_4 \rangle = \frac{2a-b}{b} y_2 + y_3 = \frac{2a-b}{b} y_2 + y_1^2
  \\[1ex]
  \phi_5(\vy) &\, = \, \langle \vy, \vw_5 \rangle = \frac{2a-b}{b} y_4 + y_5 =  \frac{2a-b}{b} y_1 y_2 + y_1^3.
\end{align}

Thus, the DMD algorithm creates a $5 \times 5$ Koopman matrix where the eigenfunction $\phi_2$ is the square of $\phi_1$ and $\phi_3$ is the cube of $\phi_1$. However, the squares of the functions $\phi_2, \phi_3, \phi_4,\phi_5$ are also eigenfunctions of the Koopman operator; yet, they cannot be derived from the given Koopman matrix (see~\Cref{figure:schematic figure 2}). Similarly, higher powers of the eigenfunctions cannot be derived from this matrix either. 
\begin{figure}[ht!]
    \centering
    \includegraphics[width=.9\textwidth]{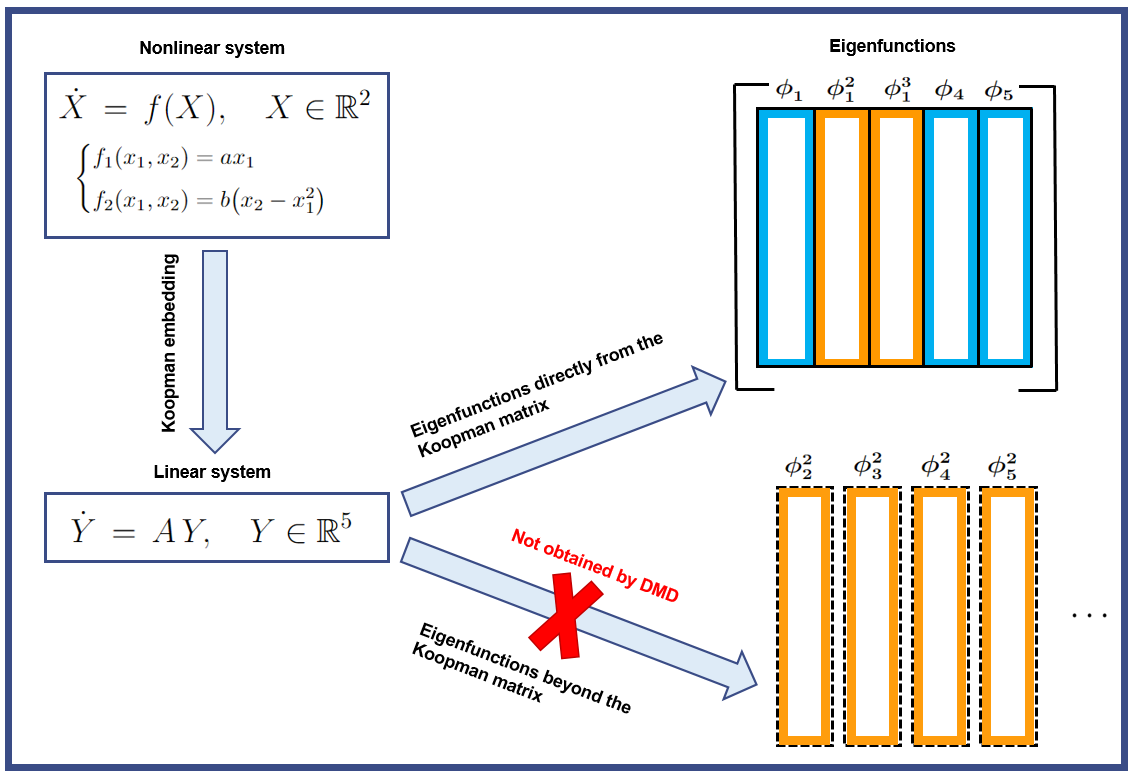} 
    \caption{
    \label{figure:schematic figure 2}Our approach allows for the computation of eigenfunctions that cannot be derived directly from the given Koopman matrix we started with. For system \eqref{nonlinear-sys}, the eigenfunctions $\phi_1, \phi_2, \phi_3, \phi_4, \phi_5$ can be obtained from the Koopman matrix $A$. While the second and third powers of $\phi_1$ are also eigenfunctions that can be derived from this matrix, the powers of $\phi_2, \phi_3, \phi_4$, and $\phi_5$ cannot be obtained directly from it. Therefore, our approach enables the derivation of additional eigenfunctions by utilizing higher powers ($p \geq 4$) of $\phi_1$ and by considering powers $p \geq 2$ of $\phi_2, \phi_3, \phi_4$, and $\phi_5$. Blue lines represent eigenfunctions obtained from the Koopman matrix, while orange lines correspond to integer powers of eigenfunctions. Solid lines indicate eigenfunctions derived directly from the Koopman matrix, whereas dashed lines denote those that are not.}
\end{figure}
\section{Related work}\label{sec:related work}

We now briefly discuss research related to the algebra of Koopman eigenfunctions, and their extension across singularities. The Koopman operator is often studied in the ergodic setting, which we introduce in \Cref{sec:ergodic system}. The considerations on singular eigenfunctions concern the approach to attractors and not the attractors themselves (on which the dynamics can be ergodic), and so we require a more general setting.
In general, a dynamical system is defined by a set $\mathcal{M}$ called the state space or the phase space and a map $\flow: \mathcal{M} \to \mathcal{M}$ called the flow (or evolution function).
Typically, one considers the case where $\mathcal{M}$ is a measurable space, with a $\sigma$-algebra $\mathfrak{B}$, and $\flow$ is $\mathfrak{B}$-measurable (for more information see \cite{applied-koopmanism:2012}).

\subsection{Koopman operator and its multiplicative property}
Let $\mathcal{M}$ be a $\mathfrak{B}$-measurable set and $\mathcal{F}=\left\lbrace f:\mathcal{M}\to\mathbb{C}\right\rbrace$ a space of measurable, complex-valued functions. 
\begin{definition}(Koopman operator (discrete-time));
Consider a discrete-time dynamical system defined by the nonlinear (non-singular) map $F: \mathcal{M} \to \mathcal{M}$ s.t. $\vx({n+1})=F(\vx(n)),\ n\in\mathbb{N}$, $\vx(1)\in \mathcal{M}$. The Koopman operator $\koop:\mathcal{F}\to\mathcal{F}$ associated with
the map $F: \mathcal{M} \to \mathcal{M}$ acts on observables $g\in\mathcal{F}$ and is defined through the composition
\begin{align}
[\koop g](\vx(n))=(g\circ F)(\vx(n)).    
\end{align}
The composition on the left is a linear operation, so $\koop$ is a linear operator on $\mathcal{F}$ and can be spectrally decomposed. 
\end{definition} 
\begin{definition}(Koopman operator (continuous-time));
Consider a continuous-time dynamical system $ \dot{\vx}=F(\vx), \, \vx \in \mathcal{M}$, described by the one-parameter family of (non-singular) flow maps $F^t: \mathcal{M} \to \mathcal{M}, t \in \mathbb{R}^+$. The family of Koopman operators $\mathrm{}{K}^t_{F^t}:\mathcal{F}\to\mathcal{F}$ associated with the family of flow maps $F^t$ acts on observables $g\in \mathcal{F}$ and is defined as
\begin{align}
[\mathrm{}{K}^t_{F^t} \, g](\vx)=(g\circ F^t)(\vx).    
\end{align}
\end{definition} 
\begin{definition}(Koopman eigenfunction and eigenvalue (discrete-time));
An eigenfunction of the Koopman operator $\koop$ associated with the map $F: \mathcal{M} \to \mathcal{M}$ is a nonzero observable $\phi_k \in \mathcal{F} \setminus \{0 \}$ such that 
\begin{align}
\koop \phi_k \, = \, \phi_k \circ F \, = \, \lambda_k \phi_k,    
\end{align}
where $\lambda_k \in \mathbb{C}$ is the corresponding eigenvalue.
\end{definition} 
\begin{definition}(Koopman eigenfunction and eigenvalue (continuous-time));
An eigenfunction of the Koopman operator $\mathrm{}{K}^t_{F^t}$ associated with the semigroup of flow maps $(F^t)_{t \geq 0}$ is a nonzero observable $\phi_k \in \mathcal{F} \setminus \{0 \}$ such that 
\begin{align}
\mathrm{}{K}^t_{F^t} \phi_k \, = \, \phi_k \circ F^t \, = \, e^{\lambda_k t} \phi_k \hspace{.4cm} \forall t \geq 0,   
\end{align}
where $\lambda_k \in \mathbb{C}$ and $e^{\lambda_k t}$ is the corresponding eigenvalue.\\[1ex]
If the semigroup of operators $\mathrm{}{K}^t_{F^t}$ is strongly continuous, Koopman eigenfunctions and eigenvalues are characterized by the relation $\mathcal{L}\phi_k \, = \, \lambda_k \phi_k$. In the case where the semigroup $(F^t)_{t \geq 0}$ arises from the flow of the system $\dot{\vx}=F(\vx)$, they are obtained by solving the following equation \cite{mauroy2020koopman}
\begin{align}
 F \cdot \nabla \phi_{k} =\lambda_{k} \phi_{k}.  
\end{align}
\end{definition} 
A key property of the Koopman operator, central to this work, is its multiplicative structure:
\begin{prop}\label{prop: multiplicative_prop}Products and (integer) powers of eigenfunctions are also eigenfunctions of $\koop$, if the combination is in $\mathcal{F}$:
 $ \koop[\phi_{k_1}^{m_1}\phi_{k_2}^{m_2}]=\lambda_{k_1}^{m_1}\lambda_{k_2}^{m_2}[\phi_{k_1}^{m_1}\phi_{k_2}^{m_2}],$ with $k_1,k_2 \in \mathbb{N}$ and $m_1,m_2 \in \mathbb{Z}$.
 The same property holds for $\mathrm{}{K}^t_{F^t}$, corresponding to continuous-time systems.
\end{prop}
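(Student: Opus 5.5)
The proof is a direct computation from the definition of $\koop$ as a composition operator. The key observation is that composition with $F$ commutes with pointwise multiplication and with taking pointwise powers: for any functions $f,g$ and integer $m$ we have $(f g)\circ F = (f\circ F)(g\circ F)$ and $f^m\circ F = (f\circ F)^m$, the latter for negative $m$ wherever $f$ and $f\circ F$ do not vanish. The plan is therefore to first establish that $\koop$ is multiplicative on products. Then I will evaluate it on $\phi_{k_1}^{m_1}\phi_{k_2}^{m_2}$ pointwise at an arbitrary $\vx\in\mathcal{M}$.

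Concretely, for $\vx\in\mathcal{M}$,
\begin{align*}
\koop[\phi_{k_1}^{m_1}\phi_{k_2}^{m_2}](\vx) &= \phi_{k_1}(F(\vx))^{m_1}\,\phi_{k_2}(F(\vx))^{m_2} \\
&= (\lambda_{k_1}\phi_{k_1}(\vx))^{m_1}(\lambda_{k_2}\phi_{k_2}(\vx))^{m_2} \\
&= \lambda_{k_1}^{m_1}\lambda_{k_2}^{m_2}\,\phi_{k_1}^{m_1}(\vx)\phi_{k_2}^{m_2}(\vx),
\end{align*}
using the eigenfunction relation $\phi_k\circ F=\lambda_k\phi_k$ in the second step. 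For $m_i\ge 0$ this is immediate. For $m_i<0$ the hypothesis that the combination lies in $\mathcal{F}$ means that $\phi_{k_i}$ is nonvanishing on the relevant set. Then $\phi_{k_i}\circ F=\lambda_{k_i}\phi_{k_i}$ forces $\lambda_{k_i}\neq 0$, because $\phi_{k_i}\circ F$ takes values of $\phi_{k_i}$ and hence is nonzero. This makes $\lambda_{k_i}^{m_i}$ well defined, and the inversion step is legitimate.

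For the continuous-time statement I repeat the same computation for each fixed $t\ge 0$ with $F^t$ in place of $F$ and $e^{\lambda_k t}$ in place of $\lambda_k$. This gives $\phi_{k_1}^{m_1}\phi_{k_2}^{m_2}\circ F^t = e^{(m_1\lambda_{k_1}+m_2\lambda_{k_2})t}\,\phi_{k_1}^{m_1}\phi_{k_2}^{m_2}$. Here $e^{\lambda t}\ne0$ automatically, so negative powers cause no trouble. Alternatively, in the generator form, the Leibniz rule gives
\[
F\cdot\nabla(\phi_{k_1}^{m_1}\phi_{k_2}^{m_2}) = (m_1\lambda_{k_1}+m_2\lambda_{k_2})\,\phi_{k_1}^{m_1}\phi_{k_2}^{m_2}
\]
wherever the functions are differentiable and nonzero.

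The only real obstacle is the bookkeeping for negative exponents. I must ensure the product is a well-defined, nonzero element of $\mathcal{F}$, and I must check that zeros of $\phi_{k_i}$ are mapped consistently by $F$. The relation $\phi\circ F=\lambda\phi$ shows that the zero set of $\phi$ is backward invariant, so $1/\phi$ is defined at $F(\vx)$ whenever it is defined at $\vx$. Taking the statement's hypothesis that the combination is in $\mathcal{F}$ as including nonvanishing, and noting that the product is nonzero whenever $\phi_{k_1},\phi_{k_2}$ are nonzero on a common set, completes the argument.
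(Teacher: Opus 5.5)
Your proposal is correct and is the same argument the paper gives: the paper says only that the result is a straightforward application of the definition $\koop g = g\circ F$, and your pointwise computation spells this out, including the continuous-time case with $e^{\lambda_k t}$. Your extra bookkeeping, namely that a nowhere-vanishing eigenfunction forces $\lambda_{k_i}\neq 0$ and that the product must not vanish identically to count as an eigenfunction, is a useful refinement the paper leaves implicit.
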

%
\begin{proof}
Straightforward application of the definition of the Koopman operator.
\end{proof}
 Note that \Cref{prop: multiplicative_prop} requires that the powers of the eigenfunctions must be an element of the function space $\mathcal{F}$. This means for negative $m_1$ or $m_2$ the associated eigenfunctions must not be zero on their domain.
In the paper, whenever $|m_1|>1$ or $|m_2|>1$, we will call the corresponding functions {\em monomial eigenfunctions} to distinguish them from {\em principal eigenfunctions}.

 This characteristic imposes {\em a lattice or group structure} on the Koopman operator's spectrum \cite{mauroy2020koopman, applied-koopmanism:2012, lee2023optimized}. Building on this property, in \cite{lee2023optimized} the authors introduced Multiplicative Dynamic Mode Decomposition (MultDMD) to incorporate the Koopman operator's multiplicative structure into its finite-dimensional approximation. They developed a specialized dictionary of basis functions and an efficient optimization algorithm to maintain this structure, ensuring that the computed eigenfunctions include powers. Instead, in this work, we consider an arbitrary dictionary and generate a large set of eigenfunctions by taking their powers, either by multiplying them with themselves or with other eigenfunctions. 
\subsection{Extending eigenfunctions beyond infinity}\label{sec-infinities}
To create the powers of eigenfunctions in order to generate a larger set of them, we need a small set of initial eigenfunctions.  However, approximating these initial eigenfunctions can be quite challenging in some cases. For instance, in certain systems, eigenfunctions may inherently asymptote to $\pm$ infinity on specific subsets of the entire space. 
This can occur, for example, for systems with multiple steady states or limit cycles~\cite{bollt2018matching,mauroy-2013},
where eigenfunctions may diverge at the boundaries between basins of attraction~\cite{bakker2020learning, dietrich2020koopman}. Therefore, it is crucial to develop effective techniques for extending  (analytically continuing) eigenfunctions ``beyond infinity''. 
We will explore how to extend eigenfunctions across singularities for different examples.

\subsection{Existing algorithms for the approximation of the Koopman operator}
Dynamic Mode Decomposition (DMD), which was originally introduced by Schmid and Sesterhenn \cite{dmd-schmid:2010},  can be seen as a variant of a standard Arnoldi method \cite{dmd-arnoldi:2009}. It can be used to approximate the Koopman operator for linear systems~\cite{mezic-2005}. The approximations can also be useful for nonlinear systems, such as fluid flows~\cite{mezic-2013}, if a sufficiently rich set of nonlinear observations of the state of nonlinear system is provided. The latter was formalized as Extended Dynamic Mode Decomposition (EDMD) by Williams, Kevrekidis, and Rowley \cite{edmd:2015}, through the choice of a proper truncated basis of the function space the operator acts on. Li et al.~\cite{li-2017a} use neural networks to construct a flexible, problem-dependent dictionary for this purpose. For measure-preserving dynamical systems, the mpEDMD algorithm as introduced by Colbrook \cite{mpdmd:2023} constrains the spectrum of the approximating matrix to lie on the unit circle.
\subsection{General eigensolvers: Power method and QR algorithm}
We briefly outline the \textbf{power method}, based on Stoer and Bulirsch \cite{intro-numerics:2010}. See Also \Cref{apx:algorithms}. 
Write the eigenpair of $A \in \mathbb{C}^{n\times n}$ as $(\lambda_1, \vv_1), (\lambda_2, \vv_2), \dots (\lambda_n, \vv_n)$ and assume $\lvert \lambda_1 \rvert > \lvert \lambda_2 \rvert \geq \dots \geq \lvert \lambda_n \rvert$. Then for $\vq = \sum_{i=1}^n c_i \vv_i \in \mathbb{C}^n$, it is easy to see that$$
\frac{1}{\lvert \lambda_1 \rvert^m} A^m \vq 
= \sum_{i=1}^n c_i \frac{A^m v_i}{\lvert \lambda_1 \rvert^m}
= \sum_{i=1}^n \left(\frac{\lambda_i}{\lvert \lambda_1 \rvert} \right)^m c_i \vv_i 
\to c_1 \vv_1 (m\to \infty).
$$
Note that $\left(\frac{\lambda_i}{\lvert \lambda_1 \rvert}\right)^m \to 0$ if $i\neq 1$ due to $\frac{\lvert \lambda_i \rvert}{\lvert \lambda_1 \rvert} < 1.$
Therefore, the convergence of this algorithm for any given $\vq$ is linear with respect to the ratio $\frac{\lvert \lambda_2 \rvert}{\lvert \lambda_1 \rvert}$.

The second important algorithm is the \textbf{QR algorithm}. For the initial matrix $A_1$, define a matrix $A_2$ as $A_2 = R_1 Q_1$ where $A_1 = Q_1 R_1$ is the QR decomposition computed by a standard algorithm. By definition, $A_2 = R_1 Q_1 = Q_1^* A_1 Q_1$, and as $Q$ is unitary, $A_1$ and $A_2$ are unitary similar. Thus, they have the same eigenvalues. By repeating this procedure,
$$A_{n+1} = P_n^* A_1 P_n, \text{ where } P_n = Q_1 Q_2 \dots Q_n.$$
Since $P_n$ is a product of unitary matrices, it is again a unitary matrix. Hence, $A_{n+1}$ and $A_1$ are again unitary similar and have the same eigenvalues. $A_{n}$ converges to an upper triangular matrix as $n\to \infty$. 
\section{Mathematical framework}\label{sec:mathematical framework}

\subsection{ Numerical algorithms for Koopman operator approximation}\label{sec:algorithms for operator eigenfunctions}
In this section, we discuss how the algebra of Koopman operator eigenfunctions can enable their computation, enhancing  traditional eigensolvers.

\subsubsection{Constructing eigenfunctions with integer exponents}

Once a single eigenfunction of the Koopman operator has been approximated, the multiplicative property from Proposition \ref{prop: multiplicative_prop} can be used. If $\phibar_1$ and $\phibar_2$ are eigenfunction approximations of the Koopman operator $\koop$ with eigenvalues $\lambar_1$ and $\lambar_2$, then
\begin{align}
    \phibar_{pq} = \phibar_1^p \phibar_2^q,\;\; p,q \in \mathbb{Z} 
\end{align}
is also an eigenfunction approximation of $\koop$ with eigenvalue $\lambar_{pq} = \lambar^p \lambar^q$.
It is interesting at this point to consider negative powers $p,q$. \Cref{fig:infinity_example} shows how a simple eigenfunction $\phi(x)=x$ will lead to a singular eigenfunction if the power $q=-1$ is used. The singular transformation $(\cdot)^q$ induced by taking this \textit{negative} power also provides the simplest demonstration of how continuations across infinity may be rationalized.
\begin{figure}[ht]
    \centering
    \includegraphics[width=0.8\linewidth]{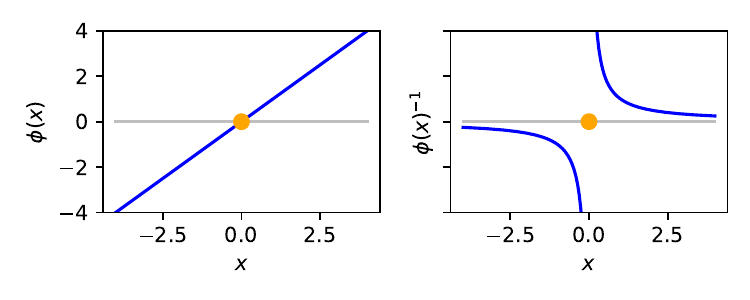}
    \caption{Left: eigenfunction $\phi(x)=x$ for a linear system $\dot{x}=-x.$ Right: inverse of the same eigenfunction, with singularity at the steady state marked in orange.}
    \label{fig:infinity_example}
\end{figure}

In the following, we assume access to data within the basin of attraction of a hyperbolic fixed point of a (possibly nonlinear) dynamical system. In later sections, we will discuss examples with multiple basins, and discuss the related numerical pathologies.
In the single basin case, assume we used EDMD with dictionary $\Psi$ to construct the approximating Koopman matrix $K \in \mathbb{R}^{d \times d}$, with left eigenvector $\{w_i\}_{i=1}^d$ and eigenvalues $\{\lambar_i\}_{i=1}^d$.
Given $i,j \in \{1, \dots, d\}$ and $p, q \in \mathbb{N}\cup\left\lbrace 0\right\rbrace$, the extended set of eigenfunctions is given by
\begin{equation}
    \phibar_{pq}(\vx) = (w_i^T \Psi(\vx))^p (w_j^T \Psi(\vx))^q,
\end{equation}
with corresponding eigenvalues $\lambar_{pq} = \lambar_i^p \lambar_j^q$.
For simplicity, we will focus our analysis only on extending powers of known eigenfunctions and not those of combinations of known eigenfunctions (i.e., we set $q=0$).

\subsubsection{Constructing eigenfunctions with real-valued exponents}

Up to now, we considered integer exponents $q,p$ in constructing new eigenfunctions. 
Yet, the procedure (also Proposition~\ref{prop: multiplicative_prop}) applies to the case of real-valued exponents, and we will take advantage of this later below.
The exponential of the complex number is defined by
$(e^x)^y := e^{y\log(e^x)}, x,y\in \mathbb{C},$
where the complex logarithm of non-zero complex number $z$, denoted as $\log(z)$, is defined by
$e^{\log(z)} := z \in \mathbb{C}.$
Note that if $z$ is given by the polar form $z = re^{i\theta}$ with $r > 0$ and $\theta \in \mathbb{R}$, then the complex logarithm is of the form
$$\log(z) = \ln(r) + i(\theta + 2\pi k), k\in \mathbb{Z},$$
where $\ln(\cdot)$ is the natural logarithm, i.e., $\ln(\cdot) := \log_e(\cdot)$.
Also remember that the principal value of the $\log(z)$ is defined as the logarithm whose imaginary part lies in the $(-\pi, \pi]$, i.e. the principal value is $\ln(r) + i\theta'$ such that $\theta' = \theta + 2\pi k \in (-\pi, \pi], k\in \mathbb{Z}$. In general, $\log(z)$ is taken to denote the principal value.

Now, assume there exists an eigenpair $(\lambda, \varphi)$ with $|\lambda|=1$, $\lambda\neq 1+0i$. Then, one can represent it as $\lambda = e^{ci}$ where $c \in (-\pi, \pi]$. For any $\lambda' = e^{di} \in \mathcal{S}^1$ with $d \in (-\pi, \pi]$, one can consider the power $p := \frac{d}{c} \in \mathbb{R}$ so that 
$$\lambda^p = e^{p\log(\lambda)} = e^{ipc} = e^{i\frac{d}{c}c} = e^{id} = \lambda'.$$
Thus, by \Cref{prop: multiplicative_prop}, $(\lambda', \varphi^p)$ is also an eigenpair.

\subsubsection{Matching eigenfunctions across steady states}
\Cref{prop: multiplicative_prop} provides the mathematical basis for understanding the nature of computationally identified, ``complicated'' monomial eigenfunctions and how they arise from multiplicative combinations of principal ones.
Using the logarithm helps reduce the multiplicative structure and identify the principal eigenfunctions that synthesize it.
This is because for all $\vx\in\mathcal{M}$ where $\phi_{k_1}(\vx)\neq 0$ and $\phi_{k_2}(\vx) \neq 0$,
$$\log|\phi_{k_1}^{m_1}(\vx)\phi_{k_2}^{m_2}(\vx)|=m_1\log|\phi_{k_1}(\vx)|+m_2\log|\phi_{k_2}(\vx)|.$$

Now, consider a (finite) collection of known eigenfunctions $\left\lbrace \phi_k\right\rbrace$, with their domain in a neighborhood $B$ of a particular steady state of $\flow$, s.t. $|\phi_k(\vx)|>0\ \forall \vx\in B$. Note that this is generally the case, because eigenfunctions are typically non-zero away from steady states (unless they are identically zero on a large portion of the state space).

Then, we can systematically filter the collection by removing linear subspaces from it, only leaving the  ``principal spectrum''~\cite{mezic-2017}.
We will see below that computing logarithms will allow us to extend the domain of eigenfunctions far beyond the neighborhood $B$ of the steady state around which they were originally computed; Remarkably, this extension can even ``jump across'' {\em multiple} singularities.

\subsubsection{Algorithms to generate Koopman operator eigenfunctions}
Given a left eigenvalue $\lambda$ and eigenvector $\phi$ of the Koopman operator matrix, we can define an algorithm to find monomial eigenfunctions $\phi_p$ (i.e., $\phi_p \equiv \phi^p$) with prescribed trajectory error/upper bounds.
For a discrete system, there is no time-integration error, so only the error $\delta v$ due to eigenvector approximations plays a role. In this case, given $\epsilon > 0$, to get $E_{\flow G}(\phibar_p, \lambar_p) \leq \epsilon$, we require 
\begin{equation}
    \norm{\delta v} \leq \dfrac{\epsilon^p}{C_{\flow G}(p, \lambar)}.
\end{equation}
Using this relation, Algorithm \ref{alg: extend_discrete_system} shows how to take powers of eigenpairs of the Koopman operator for a discrete time system.

\subsubsection{Error metric for generated eigenfunctions in two dimensions}
Taking powers of eigenfunction approximations accumulates errors. Therefore, we need to define an error function for a monomial eigenfunction. To measure this error in examples with two spatial dimensions, we define a grid $G$ on our domain $\Omega \subseteq \mathbb{R}^2$
\begin{equation}
    \label{eq: grid_G}
    G = \big\{(a + nh, b + mh) \in  \Omega \, \lvert \, a,b \in \mathbb{R}; n, m \in \mathbb{Z}; h \in (0,1) \big\},
\end{equation} 
and a norm $\norm{\cdot}_G$ on the grid
\begin{equation}
    \norm{\phi}_G = \sqrt{\dfrac{1}{\abs{G}}\sum_{\vx \in G} (\phi(\vx))^2}.
\end{equation}
For systems with higher-dimensional states $\vx$, we extend this notion accordingly, to higher-dimensional  grids of equidistant nodes.
We introduce the \textit{trajectory metric} ($E_{\flow G}$) in \Cref{def: traj_error}.
\begin{definition}
\label{def: traj_error}
(Trajectory metric) Given an extended eigenfunction approximation $\phibar_p$ with an extended eigenvalue approximation $\lambar_p$ for the Koopman operator, the trajectory error of the eigenpair approximation $(\lambar_p, \phibar_p)$ is defined, for discrete systems, by
\begin{equation}
    \label{eq: traj_error_discrete}
     E_{\flow G}(\lambar_{p}, \phibar_{p}) =  \norm{ \phibar_{p}(\flow(\cdot) ) - \lambar_{p} \phibar_{p}(\cdot)}_G^{(1/p)},
\end{equation}
 and for continuous systems by
 \begin{equation}
    \label{eq: traj_error_continuous}
     E_{\flow G}(\lambar_{p}, \phibar_{p}) =  \norm{ \phibar_{p}(\flow^{\Delta t} (\cdot) ) - \lambar_{p} \phibar_{p}(\cdot)}_G^{(1/p)}.
\end{equation}
\end{definition}
\begin{algorithm}[htbp]
\caption{Computing extended eigenpairs for discrete system. Given $\norm{\delta w}$, left eigenpair $(\lambar, w_c)$ of the Koopman matrix $K$, $\Psi$ as the dictionary basis and desired trajectory error bound $\epsilon$}
\label{alg: extend_discrete_system}
\begin{algorithmic}[0]
    \State $p \gets 1$
    \While{$p \in \mathbb{N}$}
    \State  $C_{\flow G}(p, \lambar) \gets \norm{ \norm{\Psi(\flow(\cdot)) - \lambar\Psi(\cdot)} \sum_{i=0}^{p-1} \norm{\Psi(\flow(\cdot))}^{p-1-i} \norm{\Psi(\cdot)}^i \lambar^i }_G$
    \If{ \, $ \norm{\delta w} > \dfrac{\epsilon^p}{C_{\flow G}(p, \lambar)} $ \, }
        \State \textbf{break}
    \EndIf
     \State $\phibar_p \gets (w_c^T \Psi)^p$
     \State $\lambar_p = (\lambar)^p$
     \State $p \gets p + 1$
    \EndWhile
\end{algorithmic}
\end{algorithm}

For a continuous system, we also have an integration error in the flow calculation. Given $\epsilon > 0$, to obtain $E_{\flow G}(\phibar_p, \lambar_p) \leq \epsilon$, we require 
\begin{equation}
    \epsilon_G \leq \frac{1}{L}\bigg( \big(\epsilon^p + (\lambar M)^p\big)^{1/p} - \lambar M \bigg).
\end{equation}
Using this relation, Algorithm \ref{alg: extend_cont_system} outlines how to extend eigenpairs of the Koopman operator for a continuous time system.

\begin{algorithm}
\caption{Computing extended eigenpairs for a continuous time system. Given $\epsilon_G$, left eigenpair $(\lambar, w_c)$ of the Koopman matrix $K$ and $\psi$ as the dictionary basis and desired trajectory error bound $\epsilon$}
\label{alg: extend_cont_system}
\begin{algorithmic}[0]
    \State $p \gets 1$
    \While{$p \in \mathbb{N}$}
    \If{\, $ \epsilon_G > \dfrac{1}{L}((\epsilon^p + (\lambar M)^p)^{1/p} - \lambar M) $ \,}
        \State \textbf{break}
    \EndIf
     \State $\phibar_p \gets (w_c^T \psi)^p$
        \State $\lambar_p = (\lambar)^p$
        \State $p \gets p + 1$
    \EndWhile
\end{algorithmic}
\end{algorithm}

\subsubsection{Constructing all admissible eigenvectors for 
continuous time systems}
Given the principal eigenfunctions, \Cref{alg: cont_system_extend_with_bound} describes how to construct all monomial eigenfunctions and corresponding eigenvalues of a continuous time
system that have a bounded error metric.
\begin{algorithm}[ht]
\caption{(Iterative Koopman eigensolver) Algorithm for computing extending eigenvalues $\lambar_p^i$ and extended eigenfunctions $\phibar_p^i$ of a continuous time system, given integration error $\epsilon_G$, desired trajectory error $\epsilon$, the Koopman matrix $K$, $\Psi$ as the dictionary basis and constant $L$}
\label{alg: cont_system_extend_with_bound}
\begin{algorithmic}[0]
\State $i \gets 0$
\State $A \gets K$  
\While{$i < n$}
    \State $\lambar_i, v_i \gets \text{power iteration complex}(A)$
    \State $\lambar_i, w_i \gets \text{power iteration complex}(A^T)$

    \State $p \gets 1$  
    \While{true}
        \If{$ \epsilon_G > \dfrac{1}{L}\Big((\epsilon^p + (\lambar_i M)^p)^{1/p} - \lambar_i M\Big) $}
            \State \textbf{break}
        \EndIf

        \State $\phibar_p^i \gets (w_i^T \Psi)^p$
        \State $\lambar_p^i \gets (\lambar_i)^p$

        \State $p \gets p + 1$  

    \EndWhile
\vspace{.2cm}
    \State $w_i \gets \dfrac{w_i}{w_i^T v_i}$

    \State $A \gets A - \lambar_i v_i w_i^T$

    \If{$\mathrm{imag}(\lambar_i) > 10^{-6}$}

        \State $v_{i+1} \gets \bar{v}_i$
        \State $w_{i+1} \gets \bar{w}_i$
        \State $\lambar_{i+1} \gets \bar{\lambar}_i$
        \State $w_{i+1} \gets \dfrac{w_{i+1}}{w_{i+1}^T v_{i+1}}$

        \State $A \gets A - \lambar_{i+1} v_{i+1} w_{i+1}^T$ 

        \State $i \gets i + 1$

    \EndIf

    \State $i \gets i + 1$

\EndWhile

\end{algorithmic}
\end{algorithm}

\subsection{Theoretical analysis}\label{sec:analysis}

The two main sources of error in eigenfunction approximation are (a) the error in the eigenvector of the Koopman matrix $K$ due to the eigensolver, and (b) the error in the integration for the flow computation in continuous systems. Eigenvector error is present for both discrete and continuous system computations. We can estimate upper bounds for the trajectory error $E_{\flow G}$ with respect to these errors.

\subsubsection{Integration error in continuous systems}
For a continuous time system, the trajectory error will depend on the error introduced while integrating the system to get the flow $\flow^{\Delta t}$. Let
\begin{equation}
    \flow^{\Delta t} (\vx) - \vx^{\Delta t} = \varepsilon(\vx),
\end{equation}
where $\epsilon(\vx) \in \mathbb{R}^n$ is the integration error at $\vx$, $\vx^{\Delta t}$ is the accurate flow at $t = \Delta t$, and $\flow^t(\vx)$  is the computed flow.
We consider the upper bound of the trajectory error with respect to the quantity
\begin{equation}
    \label{eq:epsilon_G}
    \epsilon_G = \max_{\vx \in G} \norm{\varepsilon(\vx)}.
\end{equation}
Using the above, Proposition \ref{prop: upper_bound_continuous_prop} gives the upper bound for eigenfunction approximation $\phibar_p$ and eigenvalue approximation $\lambar_p$ with respect to the integration error $\epsilon_G$.

\begin{prop}
\label{prop: upper_bound_continuous_prop}
\begin{equation}
    E_{FG}(\bar{\phi}_p, \lambar_p) \leq \bigg(\big( \lambar^{\Delta t}  M + L\epsilon_G\big)^p -  (\lambar^{\Delta t} M)^{p}\bigg)^{(1/p)}.
\end{equation}
where $M = \max_{\vx \in G} \norm{\psi(\vx)}$ and $L$ is an upper bound on the spectral norm of the Jacobian of $\Psi$ with respect to the $l_2$ norm on the grid $G$, \begin{equation}
    \norm{J_\Psi (\vx)}_2 \leq L.
\end{equation}
\end{prop}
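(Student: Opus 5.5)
We interpret the statement in the setting where the only error is the integration error, so that $\phibar = w^T\Psi$ is treated as an exact eigenfunction of the true flow: $\phibar(\vx^{\Delta t}) = \lambar^{\Delta t}\phibar(\vx)$. Then $\phibar_p(\vx^{\Delta t}) = (\lambar^{\Delta t})^p\phibar_p(\vx)$ by \Cref{prop: multiplicative_prop}. The residual in \Cref{eq: traj_error_continuous} then reduces to $\phibar(\flow^{\Delta t}(\vx))^p - \phibar(\vx^{\Delta t})^p$. We bound this pointwise on $G$ and take the grid norm at the end. Since $\norm{\cdot}_G$ is an RMS average, a uniform pointwise bound on $G$ bounds it. We also normalise $\norm{w}\le 1$, so that $|\phibar(\vx)|\le\norm{\Psi(\vx)}\le M$.

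\textbf{Step 1 (Lipschitz perturbation).} By the mean value inequality and $\norm{J_\Psi}_2\le L$, we have
\[
|\phibar(\flow^{\Delta t}(\vx)) - \phibar(\vx^{\Delta t})| \le \norm{\Psi(\vx^{\Delta t}+\varepsilon(\vx)) - \Psi(\vx^{\Delta t})} \le L\norm{\varepsilon(\vx)} \le L\epsilon_G.
\]
Set $a = \phibar(\vx^{\Delta t}) = \lambar^{\Delta t}\phibar(\vx)$, so that $|a|\le \lambar^{\Delta t}M$, and set $b = \phibar(\flow^{\Delta t}(\vx)) - a$, so that $|b|\le L\epsilon_G$. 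This step needs the segment between the computed and exact flow points to lie where the Jacobian bound holds. We assume this as part of the hypothesis on $L$.

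\textbf{Step 2 (binomial bound).} Expand
\[
(a+b)^p - a^p = \sum_{k=1}^p \binom{p}{k} a^{p-k} b^k,
\]
so that
\[
|(a+b)^p - a^p| \le \sum_{k=1}^p \binom{p}{k}|a|^{p-k}|b|^k = (|a|+|b|)^p - |a|^p.
\]
The right-hand side is increasing in both $|a|$ and $|b|$. Inserting the bounds from Step 1 gives $(\lambar^{\Delta t}M + L\epsilon_G)^p - (\lambar^{\Delta t}M)^p$.

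\textbf{Step 3 (conclusion).} This pointwise bound is uniform over $G$, so it also bounds $\norm{\cdot}_G$. Taking the $1/p$-th root, which is monotone, yields the stated inequality.

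The main obstacle is making Step 1 honest. The statement implicitly treats $\lambar$ as real and positive, identifies $|a|$ with $\lambar^{\Delta t}M$, and ignores the eigenvector error. We would first state these as standing assumptions: an exact eigenpair of the exact flow, with $|\lambar^{\Delta t}|$ in place of $\lambar^{\Delta t}$, and normalised $w$. We would then remark that the eigenvector error $\delta v$ is treated separately, as in the discrete case.
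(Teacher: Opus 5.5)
Your proof is correct and follows the same route as the paper's. Both arguments expand $(a+b)^p-a^p$ binomially, bound the terms by Cauchy--Schwarz with $|a|\le \lambar^{\Delta t}M$ and $|b|\le L\norm{\varepsilon(\vx)}$, recombine into $(\lambar^{\Delta t}M+L\epsilon_G)^p-(\lambar^{\Delta t}M)^p$, and pass from the pointwise bound to the grid norm. The one difference is that you bound $b=\phibar(\flow^{\Delta t}(\vx))-\phibar(\vx^{\Delta t})$ exactly with the mean value inequality, whereas the paper replaces it by the first-order Taylor term $\varepsilon(\vx)^T\nabla\phibar(\vx^{\Delta t})$ (an ``$\approx$'' step), so your version turns the paper's heuristic into a genuine inequality; the price is the Jacobian bound along the segment from $\vx^{\Delta t}$ to $\flow^{\Delta t}(\vx)$, which you correctly flag and which the paper also tacitly needs at $\vx^{\Delta t}\notin G$.
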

\begin{proof} 
See \Cref{proof-prop: upper_bound_continuous_prop}
\end{proof}
\begin{remark} To keep the trajectory error below $\epsilon$ for a power $p$, we will require that the integration error $\epsilon_G$ has the bound
\begin{equation}
    \epsilon_G \leq \frac{1}{L} [(\epsilon^p + (\lambar^{\Delta t} M)^p)^{1/p} - \lambar^{\Delta t} M].
\end{equation}
\end{remark}

\begin{remark} To calculate $L$, we compute the maximum singular value of the matrix $J_\Psi(\vx)$ over the 
grid $G$ and take the maximum value
\begin{align}
    \begin{split}
         & \sigma_{\max}(\vx) = \sqrt{\lambda_{\max}\big(J_\Psi(\vx)^T J_\Psi(\vx)\big)}, \\
         & L = \max_{\vx \in G} \sigma_{\max}(\vx), \quad
         L \geq \|J_\Psi(\vx)\|_2 \;\; \forall \vx \in G.
    \end{split}
\end{align}
\end{remark}
 
\subsubsection{Eigenvector approximation error}

Let $\vw$ be a left eigenvector of the Koopman matrix $K \in \mathbb{R}^{d \times d}$. Consider a left eigenvector $\vw_c \in \mathbb{R}^d$ of $K$ computed by an eigensolver. Let
\begin{align*}
    \vw_c = \vw + \delta \vw,
\end{align*}
where $\delta \vw \in \mathbb{R}^d$ is the error vector introduced due to the eigensolver.

We can obtain an \textit{a posteriori} upper bound on the trajectory error in (\ref{eq: traj_error_discrete}) for the $p_{th}$-power eigenfunction approximation
\begin{align}
    \bar{\phi}_p(x) = (\vw_c^T \Psi(\vx))^p.
\end{align}
We assume that the true left eigenvector is normalized so that $\norm{\vw}=1$.
Assuming that $\lambar$ is the computed eigenvalue corresponding to $\vw_c$ and $\lambar_p = \lambar^p$, Proposition \ref{prop: upper_bound_discrete_prop} gives the error bound for the trajectory error with respect to the eigenvector error $\norm{\delta \vw}$.

\begin{prop}
\label{prop: upper_bound_discrete_prop}
\begin{align} 
    E_{FG}(\bar{\phi}_p, \lambar_p) \leq  C_{FG}(p, \lambar)^{(1/p)} \norm{\delta \vw}^{(1/p)},
\end{align}
where
\begin{equation}
    C_{FG}(p, \lambar)= \norm{ \norm{\Psi(F(\cdot)) - \lambar\Psi(\cdot)} \sum_{i=0}^{p-1} \norm{\Psi(F(\cdot))}^{p-1-i} \norm{\Psi(\cdot)}^i \lambar^i }_G.
\end{equation}
\begin{proof} 
See \Cref{proof-prop: upper_bound_discrete_prop}
\end{proof}

\end{prop}

\begin{remark} To keep the trajectory error below $\epsilon$ for a power $p$ we will require that the error in computed eigenvector $\delta \vw$ has norm such that
\begin{equation}
    \norm{\delta \vw} \leq \dfrac{\epsilon^p}{C_{FG}(p, \lambar)}.
\end{equation}
\end{remark}
%
\section{Computational experiments}\label{sec:numerical examples}
We now demonstrate the construction of additional (monomial) eigenfunctions in a series of computational experiments.
In \Cref{sec:example - linear system 2d}, we discuss how Koopman eigenfunctions of a linear system are constructed up to a certain accuracy.
Note that even though the system matrix of the linear system is finite-dimensional and thus only has a finite number of eigenvectors, the Koopman operator of the system still acts on an infinite-dimensional space and has an infinite number of eigenfunctions.
The benefit of analyzing it for a linear system is that all eigenfunctions are available analytically, so we can compute the approximation error of our numerical procedure to the ground truth.
In order {\em to validate the procedure for nonlinear systems}, in~\Cref{sec:example - nonlinear system 2d}, we construct an explicit, nonlinear example by transforming the state space of the linear system in \Cref{sec:example - linear system 2d} with a nonlinear diffeomorphism. This allows us to obtain analytic formulas for Koopman eigenfunctions even in this nonlinear setting.
\Cref{sec:example - separatrix} discusses what happens in general for systems with separatrices.
\Cref{sec:example - isochrons of limit cycles} shows the concept of isochrons for limit cycles.
\Cref{sec:example - saddle isochrons} shows how the concept of {\em isochrons} connects with/relates to the computation of eigenfunctions for systems with separatrices associated with saddle points.

\subsection{Example - linear system in 2D, non ergodic}\label{sec:example - linear system 2d}

Consider the continuous time linear system
\begin{equation}
    \label{eq: continuous_linear_system_example}
    \dot{\vect{\vx}} = A\vect{\vx},
\end{equation}
where $\vx \in \mathbb{R}^{2}$, $A \in \mathbb{R}^{2\times 2}$ with left eigenpairs $(\lambda_1, \vw_1)$ and $(\lambda_2, \vw_2)$.
Let the system be sampled with fixed sampling interval $\Delta t$. Then the Koopman eigenfunctions and eigenvalues of the system are given by
\begin{equation}
 \lambda_{p} = (e^{\lambda_1 \Delta t})^p,\ 
 \phi_{p}(\vx) = \big(\inner{\vw_1}{\vx}\big)^p,\ 
 \lambda_q = (e^{\lambda_2 \Delta t})^q,\ 
 \phi_q(\vx) = \big(\inner{\vect{w}_2}{\vx}\big)^q.
\end{equation}
\noindent
If $\lambar$ is a computed eigenvalue of the Koopman matrix $K$ and $\lambda$ is an eigenvalue of the Koopman generator of the continuous system, then $\lambar \approx e^{\lambda \Delta t}$ where $\Delta t$ is the temporal sampling interval for the system.

We can compare the numerically constructed monomial eigenfunctions with the true eigenfunctions of the system. Let $\phibar_p$ be a constructed, monomial eigenfunction and $\phi_p$ be the true eigenfunction. Then on a grid G as defined in (\ref{eq: grid_G}), the error given by $E_G$ is defined similarly to the discrete system:
\begin{align}
    & G_0 = \{\vect{x}\in G |\;  \abs{\phi_p} < \varepsilon\} \cup \{\vect{x} \in G |\; \abs{\phibar_p} < \varepsilon \}\\
    & c_{mode} = \textit{mode}\bigg(\dfrac{\phi_{p|G/G_0}}{\phibar_{p|G/G_0}}\bigg) \\
    & E_G(\phi_p, \phibar_p) = \norm{\phi_p - c_{mode}\phibar_p}_{G}^{1/p},
\end{align} 
where $\varepsilon$ is some tolerance.
\noindent
The trajectory error for the computed eigenpair $(\lambar_p, \phibar_p)$ is given by
\begin{equation}
    \label{eq:traj_error_cont}
  E_{FG}(\lambar_p, \phibar_p) :=  \norm{ \phibar_p(F^{\Delta t} (\cdot) ) - \lambar_p\phibar_p(\cdot)}_G^{1/p}.
\end{equation}
\noindent Consider the case
\begin{equation}
    A = \begin{bmatrix}
    -0.9 & 0.1\\
    0 & -0.8\\
    \end{bmatrix}.
\end{equation}
The matrix has left eigenpairs $(-0.9, [1, -1]^T)$ and $(-0.8, [0, 1]^T)$. The system is sampled with sampling interval $\Delta t$. Therefore, the true Koopman eigenfunctions of the system are given by
\begin{align}
     \phi_p(\vx) & = \bigg(\frac{x_1- x_2}{\sqrt{2}}\bigg)^p\\
     \phi_q(\vx) & = x_2^q,
\end{align}
with eigenvalues 
\begin{align}
    \lambda_p &= (e^{-0.9 \Delta t})^p\\
    \lambda_q &= (e^{-0.8 \Delta t})^q.
\end{align}

To approximate the eigenfunctions using DMD we collect 400 snapshot pairs (with the state variables as our Koopman observables), where the initial conditions are uniformly randomly distributed between $[-2,2] \times [-2,2]$, with temporal sampling interval $\Delta t=0.2$.
Figure \ref{fig: linear_continuous_DMD} shows the results of the DMD approximation. The out-of-sample prediction shows that the DMD approximation is able to predict trajectories accurately.
\begin{figure}[!htbp]
    \centering
    \includegraphics[scale=.55]{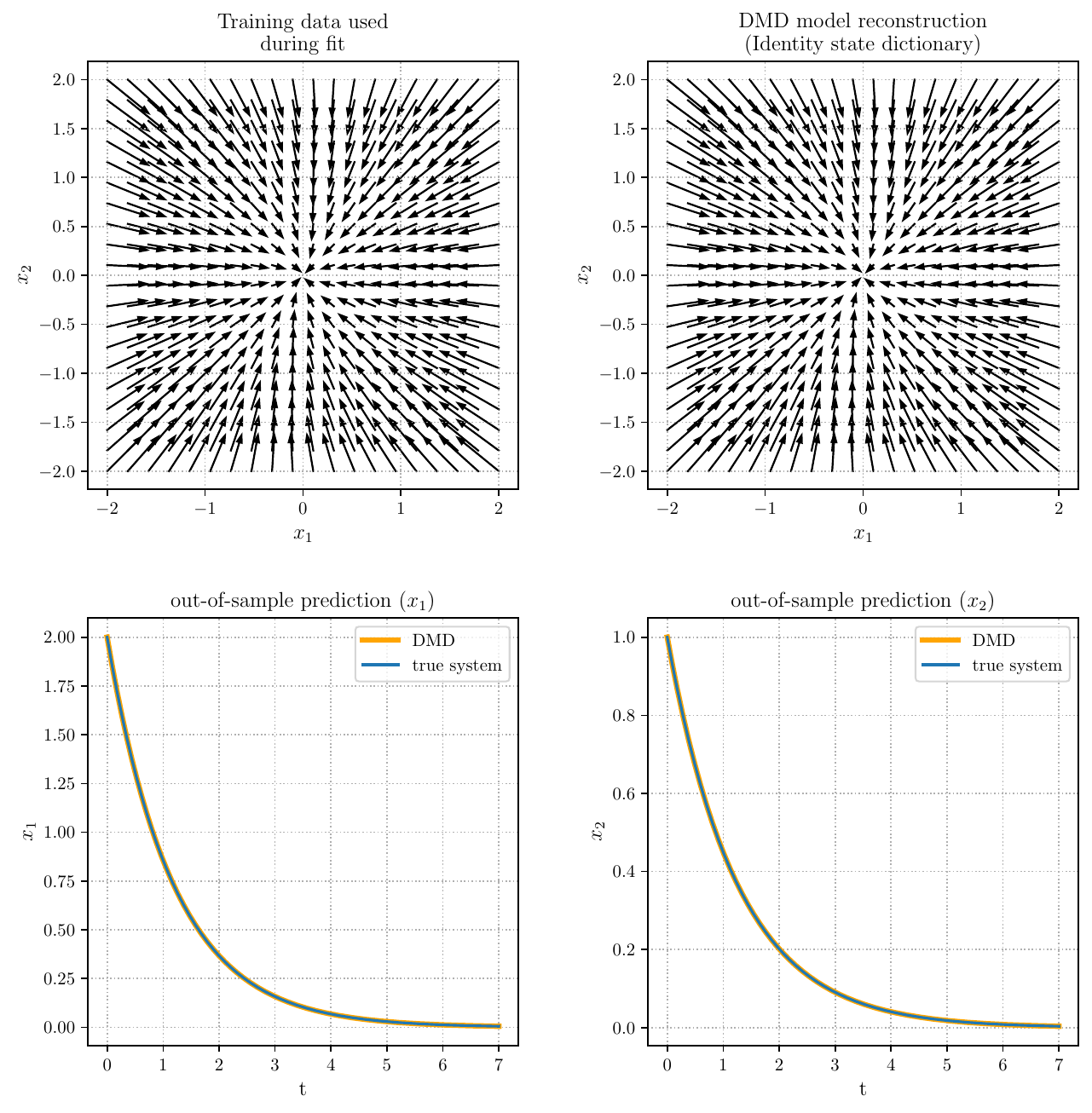}
    \caption{DMD model for the continuous linear system (\ref{eq: continuous_linear_system_example}). Top left shows the training data used for approximation, top right shows the reconstructed data, bottom shows the comparison between predicted trajectory and actual trajectory for a sample initial condition.}
    \label{fig: linear_continuous_DMD}
\end{figure}

We now create a grid $G$ with $a=-1, b=1, n=100, h=0.01$. On $G$, we calculate the flow approximated by the forward Euler method using step size $h=0.001$. Then, we calculate $\epsilon_G$ using (\ref{eq:epsilon_G}) and the true solution,
$
\vx^{\Delta t} = e^{A\Delta t}\vx,
$
and approximated solution using the forward Euler method,
\begin{equation*}
    \vx^{(0)} = \vx,\ 
    N  = \frac{\Delta t}{h},\ 
    \vx^{(i+1)}  = \vx^{(i)} + h A\vx^{(i)}, \ i=0,\dots,N,\ 
    T^{\Delta t}(\vx) = \vx^{(N)}.
\end{equation*}
We then compute the trajectory error for increasing powers, $p$ and $q$ and their upper bound. For DMD, as $\Psi = I$, upper bound, L for $\norm{J_\Psi(\vx)}_2 \leq L$ where L = 1. Figure \ref{fig: linear_continuous_DMD_traj_error_integration_bound} shows the trajectory error and upper bound with respect to the Euler integration error for extended eigenfunctions $\phibar_p$ and $\phibar_q$.

Assuming that flow $F^{\Delta t}(\vx) = e^{A\Delta t}\vx$ we can calculate the trajectory error with respect to eigenvector error by adding a random error vector $\delta v$ with $\norm{\delta v} = 10^{-6}$. Figure \ref{fig: linear_continuous_DMD_traj_error_bound_vector } shows the trajectory error and the upper bound of the trajectory error due to this error in the eigenvector for extended eigenfunctions $\phibar_p$ and $\phibar_q$.
\begin{figure}[!htbp]
    \centering
    \includegraphics[scale=.55]{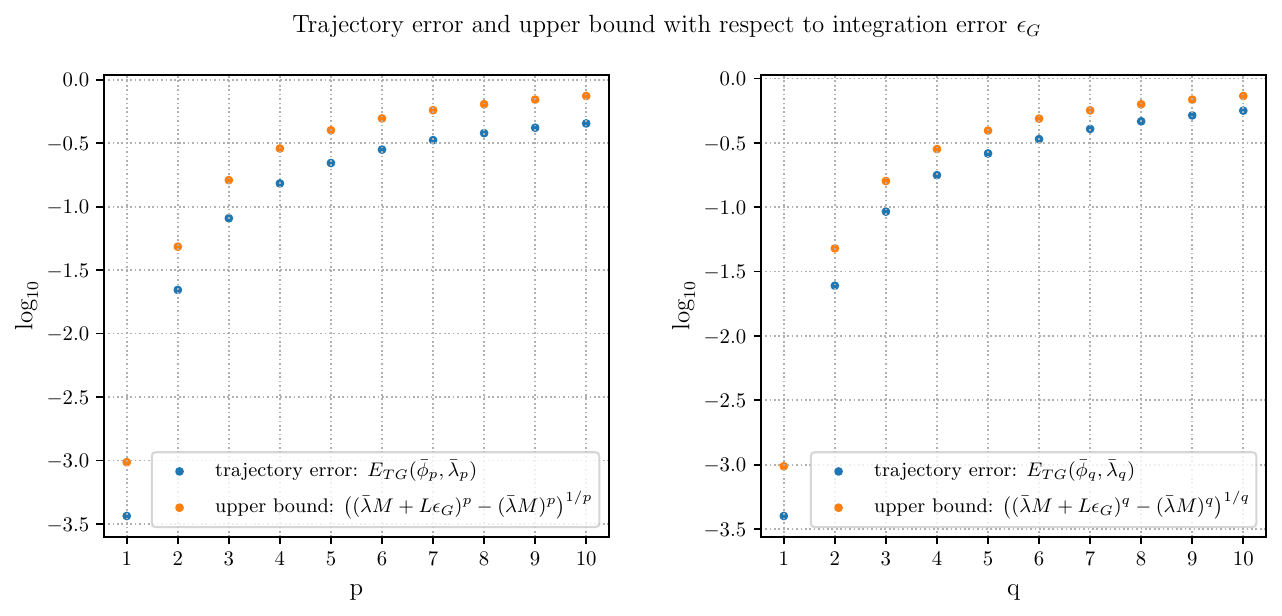}
    \caption{Integration error analysis for DMD eigenfunctions of continuous linear system (\ref{eq: continuous_linear_system_example}). The trajectory error for computed eigenfunctions (blue) and upper bound (orange) with respect to the Euler integration error $\epsilon_G$ for powers $p$ and $q$.}
    \label{fig: linear_continuous_DMD_traj_error_integration_bound}
\end{figure}

Finally, we employ Algorithm \ref{alg: extend_cont_system}, computing trajectory errors and upper bounds with respect to integration error to construct monomial eigenfunctions $\phibar_p$ and $\phibar_q$ with powers $p$ and $q$ such that the trajectory error stays below a desired upper bound $\epsilon = 0.2$. Figure \ref{fig: linear_continuous_DMD_algorithm } shows the results of Algorithm \ref{alg: extend_cont_system}  for a desired trajectory error upper bound $\epsilon = 0.1$. As seen in the figure, the powers $p$ and $q$ suggested by the algorithm are close to the actual powers up to which the eigenfunctions can be extended.
\begin{figure}[!htbp]
    \centering
    \includegraphics[scale=.55]{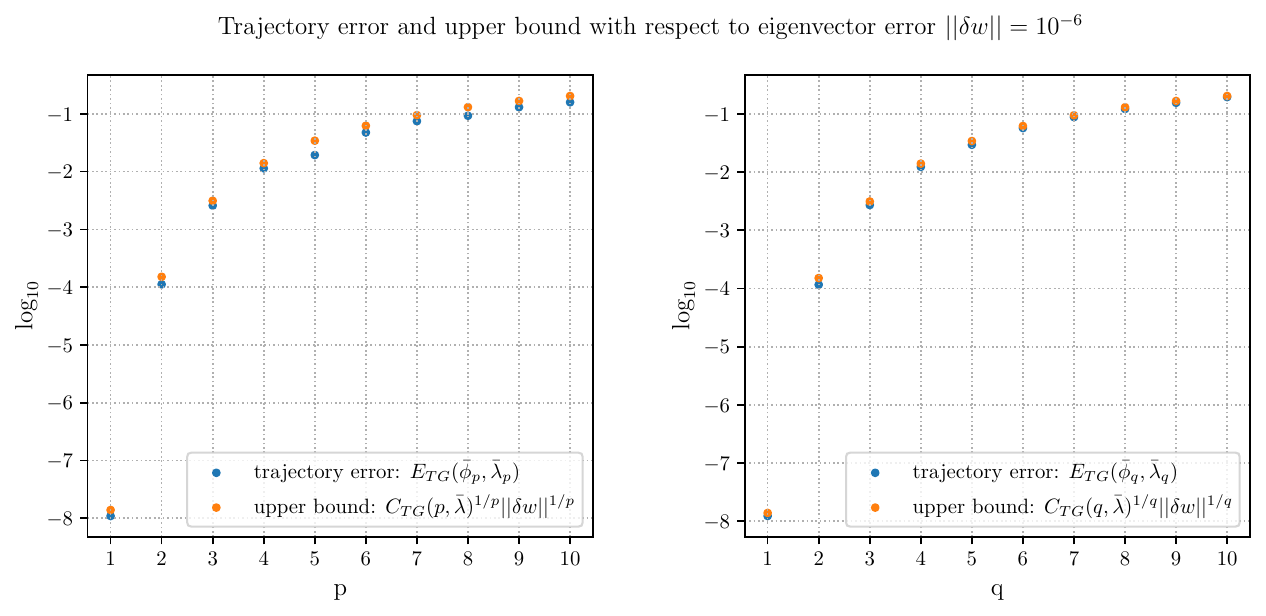}
    \caption{Eigenvector error analysis for DMD eigenfunctions of continuous linear system (\ref{eq: continuous_linear_system_example}). The trajectory error for computed eigenfunctions (blue) and upper bound (orange) with respect to the eigenvector error $\norm{\delta \vw} = 10^{-6}$ for powers $p$ and $q$.}
    \label{fig: linear_continuous_DMD_traj_error_bound_vector }
\end{figure}
\begin{figure}[!htbp]
    \centering
    \includegraphics[scale=.5]{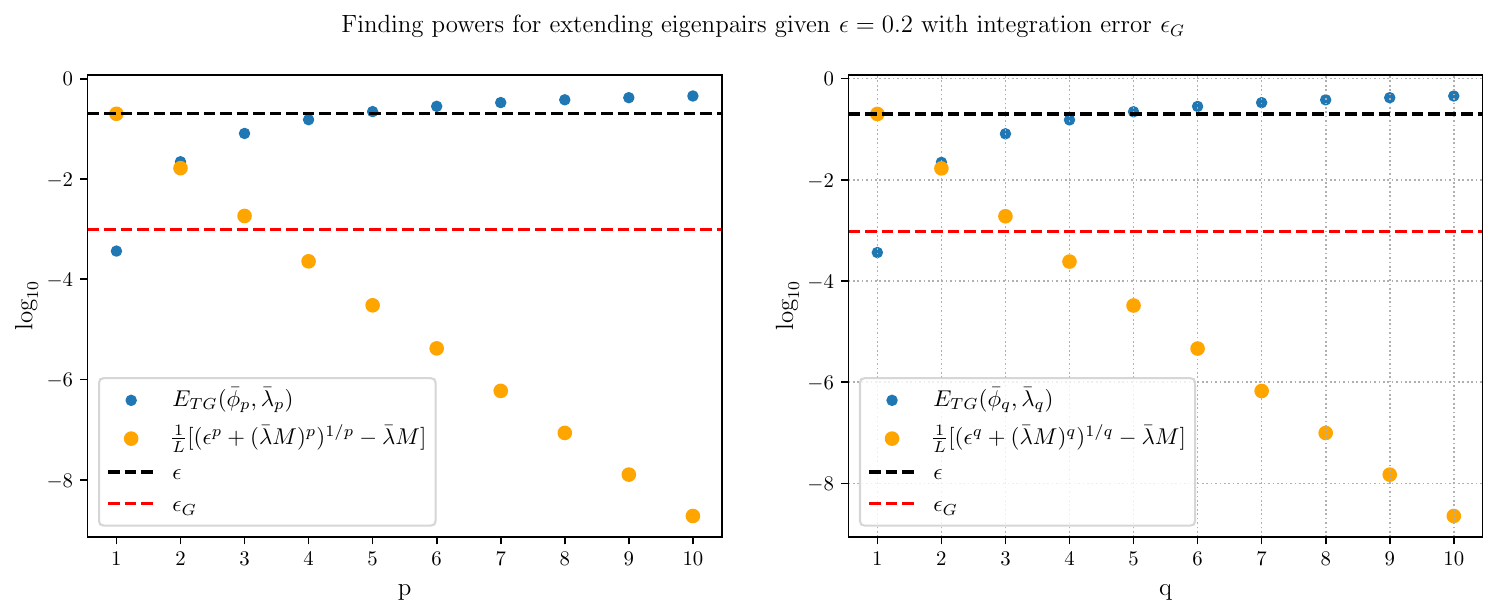}
     \caption{Results of Algorithm \ref{alg: extend_cont_system} applied to the DMD approximation of continuous linear system (\ref{eq: continuous_linear_system_example}) with Euler integration error $\epsilon_G$ and desired trajectory error $\epsilon = 0.2$. The value of $p$ and $q$ suggested by the algorithm-- where the upper bound for $\epsilon_G$ (orange) crosses $\epsilon_G$ (red line) is close to actual value of $p$ and $q$ where the trajectory error (blue) crosses the required $\epsilon$ (black).}
    \label{fig: linear_continuous_DMD_algorithm }
\end{figure}
\subsection{Example - Nonlinear (transformation of linear) system}\label{sec:example - nonlinear system 2d}

Consider the linear continuous system 
\begin{align}
    \label{eq: linear_system_example_2}
    \dot{\vx} = A\vx,
\end{align}
where $A = \begin{bmatrix}
    -0.9 & 0.1\\
    0 & -0.8\\
\end{bmatrix}$.
\\

We transform using the diffeomorphism $\vy = h(\vx) = \log(e^{\vx} + 1)$ (functions applied coordinate-wise) to obtain the new system 
\begin{align}
    \dot{\vy} =\begin{bmatrix}
                1-e^{-y_1} & 0\\
                0 & 1-e^{-y_2}
                \end{bmatrix} A \log(e^{\vy} -1)\label{eq: nonlin_transformed_eqn}.
\end{align}
Using Proposition \ref{prop: multiplicative_prop}, the explicit eigenfunctions of this nonlinear system are then given by ($\phi \circ h^{-1}$) with eigenvalue $\lambda$, where $\phi$ is an eigenfunction of the linear system (\ref{eq: linear_system_example_2}) with eigenvalue $\lambda$.

As the eigenfunctions of the linear system are given by $\phi_i(\vx) = \inner{\vw_i}{\vx}$ with eigenvalues $\lambda_i$,  $(i=1,2)$ where $\vw_i$ is left eigenvector of $A$ with eigenvalue $\lambda_i$, the eigenfunctions of system (\ref{eq: nonlin_transformed_eqn}) are given by 
\begin{align}
    \phi^{nonlin}_i(\vy) = \phi_i\circ h^{-1}(\vy) = \inner{\vw_i}{\log(e^{\vy} -1)}
\end{align}
with eigenvalues $\lambda_i$ for $i=1,2$.

We sample the linear system using the exact solution with $\Delta t = 0.02$, collecting 400 snapshot pairs with initial conditions uniformly randomly distributed between $[-2,2]  \times [-2,2]$. Then, we transform the sampled data using the diffeomorphism $h$. We use this transformed data to perform EDMD with a radial basis function (RBF) dictionary with 40 RBF Gaussian kernel functions with centers calculated using k-means clustering of the transformed data.

We take the grid $G$ with $a=1, b=2, n=100, h=0.01$. Some of the explicit eigenfunctions on the grid $G$ are shown in Figure \ref{fig: explicit_eigenfunctions_nonlin_from_lin} (Appendix \ref{appendix:figures_NTLS}), and the computed EDMD eigenfunctions are shown in Figure \ref{fig: edmd_eigenfunctions_nonlin_from_lin} (Appendix \ref{appendix:figures_NTLS}). Then we calculate $\epsilon_G$ by integrating over the grid and using the integration method \textit{RK45} with the explicit system (\ref{eq: nonlin_transformed_eqn})~\cite{dormand-1980}.

We calculate the first nine eigenpairs of the Koopman matrix.
Then, we use the Koopman eigensolver algorithm defined in Algorithm \ref{alg: cont_system_extend_with_bound} to extend the eigenfunctions of the system. The desired trajectory error is set to $\epsilon = 0.01$. We use the algorithm to get up to $p=3$ extended eigenfunctions for the first nine eigenfunctions with trajectory error less than $\epsilon$. Figure \ref{fig: spectrum_algorithm_extend} shows the spectrum and the powers up to which each eigenpair can be safely extended. Figure \ref{fig: algorithm_extended_eigenfunctions_diffeo_system} (Appendix \ref{appendix:figures_NTLS}) shows some of the extended eigenfunctions. The extended eigenfunctions do not match the explicit eigenfunctions in this case. This might be expected as the EDMD eigenfunctions can differ from the explicit eigenfunctions, as the transformed non-linear system can have an infinite number of independent eigenfunctions; we feel that given the size of the matrix and the accuracy of the calculation, such results are not unreasonable.
\begin{figure}[!htbp]
    \centering
    \includegraphics[scale=0.5]{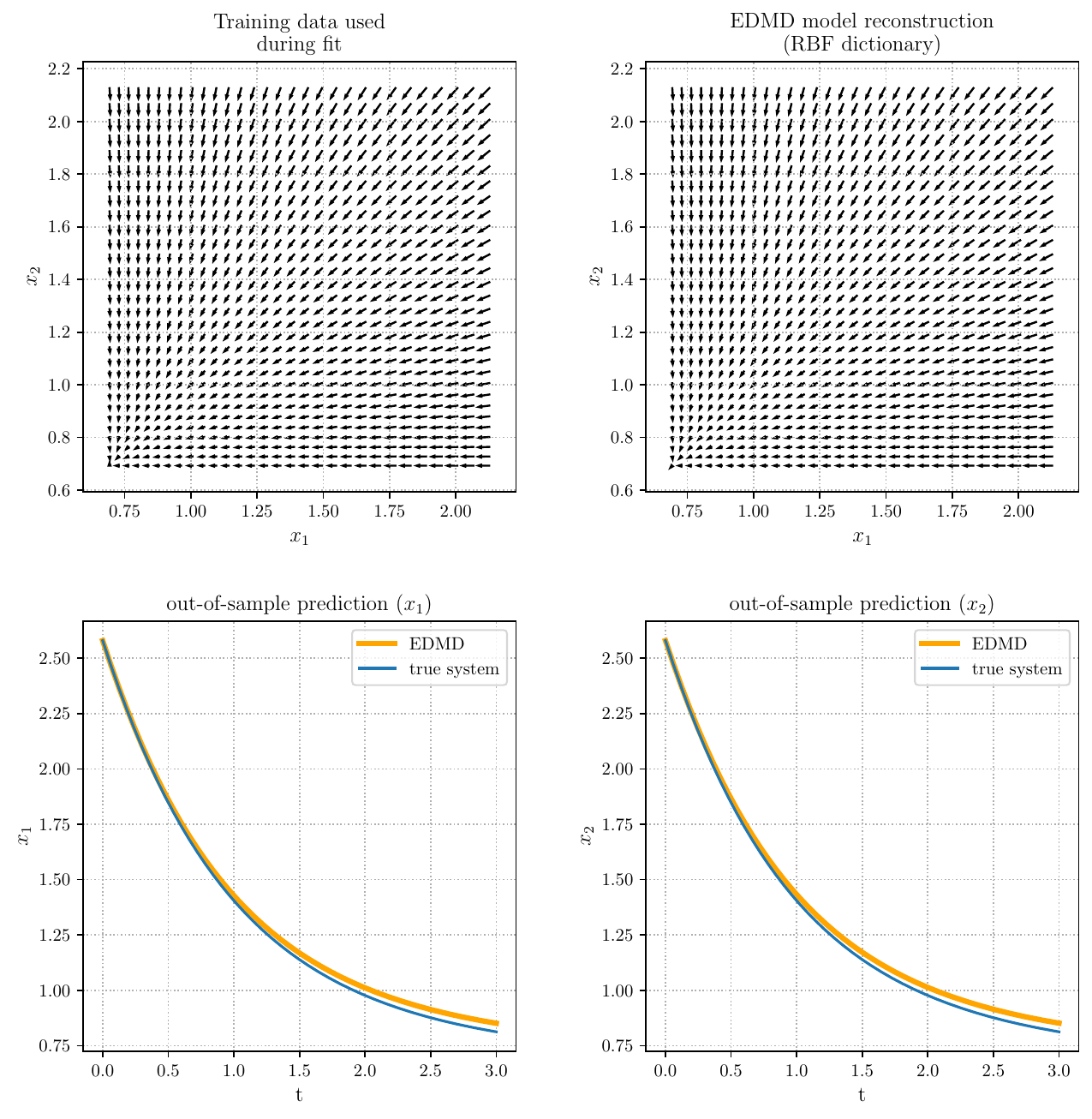}
    \caption{EDMD model for the non-linear system (\ref{eq: nonlin_transformed_eqn}). Top left shows the training data used for approximation, top right shows the reconstructed data, bottom shows the comparison between predicted trajectory and actual trajectory for one initial condition.}
\end{figure}
\begin{figure}[!htbp]
    \centering
 \includegraphics[scale=0.46]{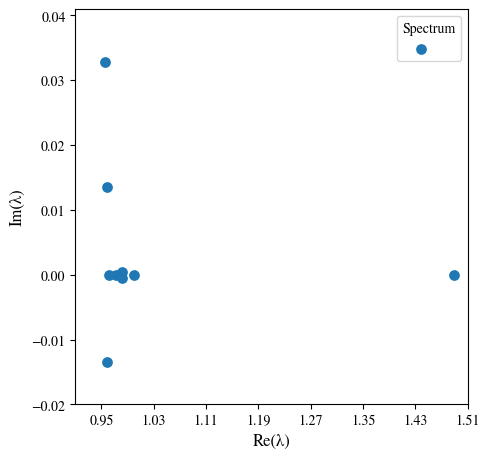}
\hspace{.6cm}\includegraphics[scale=0.46]{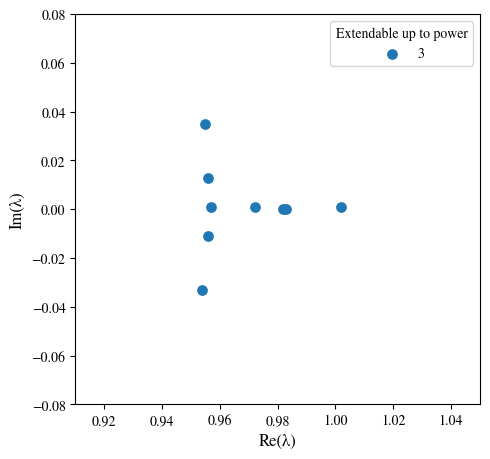}
    \caption{
    %
    %
    %
    Spectrum computed using Algorithm  \ref{alg: cont_system_extend_with_bound} for the nonlinear system (\ref{eq: nonlin_transformed_eqn}) and powers up to which the eigenfunctions can be extended for the first 9 eigenvalues.}
    \label{fig: spectrum_algorithm_extend}
\end{figure}

\subsection{Bridging eigenfunctions across singularities}
To generate new eigenfunctions by repeatedly multiplying them with themselves or with other approximated eigenfunctions, we need to approximate a small initial set of them. However, as discussed in Sect. \ref{sec-infinities}, in some cases, obtaining such initial sets of eigenfunctions is challenging, especially in cases where they involve infinities. In the following examples, we will explore how to extend eigenfunctions beyond infinity for systems with (possibly multiple) singularities.
\subsubsection{Nonlinear system with two steady states}\label{sec:example - separatrix}

Consider an ODE on $\mathbb{R}$, with steady states at $a=2$, $b=3$, s.t.
\begin{align}\label{system-1d}
\dot{x}=(x-a)(x-b).    
\end{align}
The eigenvalues of the linearization around $a$ and $b$ are $(a-b)$ and $(b-a)$, respectively.
Since scalar multiples $c\phi(x)$  of Koopman eigenfunctions $\phi(x)$ are also eigenfunctions, we are allowed to consistently select a single representative member of this family by ``pinning'' the slope of an eigenfunction at some convenient reference value (e.g., link it to the slope of the eigenvector of the linearization at that steady state that has the same eigenvalue).
%
The Koopman eigenfunctions of this system associated with the steady states are obtained by solving $\nabla \phi_{k_i} \cdot \dot{x}=\lambda_{k_i} \phi_{k_i}, \,  i=1,2$. This results in
\begin{align}\label{eigenfunctions_1d}
\phi_{k_1}(x)=\left|\frac{x-a}{x-b}\right|^{\lambda_{k_1} / (b-a)},\hspace{.3cm} \text{and} \hspace{.6cm} \phi_{k_2}(x)=\left|\frac{x-b}{x-a}\right|^{\lambda_{k_2} / (b-a)}.
\end{align}
When $\, \lambda_{k}= k(b-a) \,$ with $k \in \mathbb{Z}$, the eigenfunctions \eqref{eigenfunctions_1d} take the form
\begin{align}\label{eigenfunctions_1d_real}
\phi_{k_1}(x)=\bigg( \frac{x-a}{x-b}\bigg ) ^{k_1},\hspace{.3cm} \text{and} \hspace{.6cm} \phi_{k_2}(x)=\bigg( \frac{x-b}{x-a}\bigg )^{k_2}, \hspace{.6cm} 
 k_1, k_2 \in \mathbb{Z}.    
\end{align}
Then, almost by construction, real-valued Koopman eigenfunctions, 
 associated with one of the steady states become zero at that steady state and approach infinity at the other. For instance, for $k_1 =k_2 = 1$, the Koopman eigenfunctions \eqref{eigenfunctions_1d_real} are plotted in Fig.~\ref{fig_eigenfunctions_real}.
\begin{figure}[H]
\centering
\includegraphics[scale=.64]{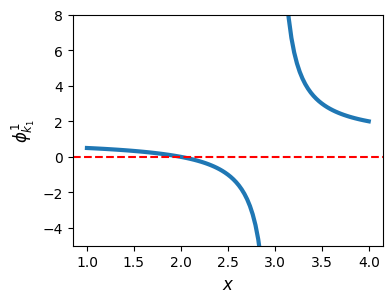}
\hspace{.1cm}
\includegraphics[scale=.64]{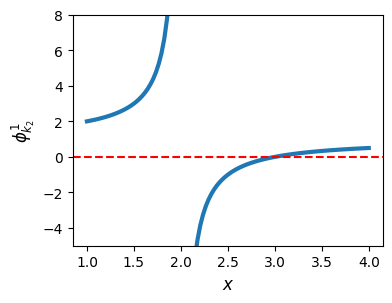}
\caption{The Koopman eigenfunctions \eqref{eigenfunctions_1d_real} associated with the steady states $x=2$ (left) and $x=3$ (right) for $k_1 =k_2 = 1$. As shown, $\phi_{k_1}$ becomes zero at $x=2$ and approaches infinity at $x=3$, while $\phi_{k_2}$ becomes zero at $x=3$ and approaches infinity at $x=2$. }\label{fig_eigenfunctions_real}
\end{figure}
Eigenfunctions of this system can be systematically extended beyond the ``next adjacent'' steady state and even beyond the ``next once removed" steady state. In the interval between
$a$ and $b$, {\em both} steady states lead to properly defined eigenfunction values; we show how this can be exploited to ``cross'' the singularity of the individual eigenfunctions. We first approximate the eigenfunctions associated with the linearization of each steady state in its neighborhood (so, either in a region $[a-\epsilon, a+\epsilon]$, or $[b-\epsilon, b+\epsilon]$, for an $\epsilon<b-a$). If we compute them in a region that is large enough, i.e., we choose $\epsilon$ large, we get approximations in a region $U\subset [a,b]$ for both sets.
At this point, it is appropriate to discuss some observations from computational experiments. 
Clearly, an eigencomputation of the EDMD matrix may converge to any power of the principal eigenvectors (discretized eigenfunctions). In principle, all eigenfunctions, including powers of eigenfunctions corresponding to each of the alternative nearby steady states, would be computable from data if we had an appropriate dictionary. What we practically observe is that data-driven computations with data collected {\em only in the neighborhood of one steady state} tend to numerically converge {\em to the principal eigenfunctions ``corresponding to" that steady state} (not even to their integer powers). We \textit{never} (in our experiments) saw the procedure converge to eigenfunctions associated with the other steady state.
In fact, it is quite challenging to numerically approximate the functions $\phi_{k_1}$ and $\phi_{k_2}$ when they approach their farther away, more remote,``unassociated" steady state, given that they approach infinity quite rapidly there.
%
We compute the logarithm of all functions in a region that excludes points close to the steady states, and test whether they are linearly dependent. The test consists of computing principal components of the logarithm of all eigenfunctions (evaluated on points in $[1,4]$), stacked together in one large dataset $L$: 
\begin{align*}
L=\left[\begin{matrix}
|&|& &|&|&|& &|\\
\log|\phi_{k_1}|&\log|\phi_{k_1}^2|&\cdots&\log|\phi_{k_1}^5| &
\log|\phi_{k_2}|&\log|\phi_{k_2}^2|&\cdots&\log|\phi_{k_2}^5|\\
|&|& &|&|&|& &|
\end{matrix}\right].
\end{align*}
Fig.~\ref{fig:LogAbsKEF_1Dperfect_PCA} (left) shows that {\em only a single direction} is present in this dataset, i.e. all logarithms of the eigenfunctions are linearly dependent. The corresponding principal component of $L$ is also shown (Fig.~\ref{fig:LogAbsKEF_1Dperfect_PCA}, right). Note that it is computed in log space, i.e. negative values here mean very small (but positive) values when considered in the original space.
We cannot compute this with EDMD yet, as we show below. One of the goals of this paper is to discuss how such a global eigenfunction could be constructed numerically from partial EDMD results in the neighborhood of different steady states.

\begin{figure}[ht!]
\centering
\includegraphics[width=.48\textwidth]{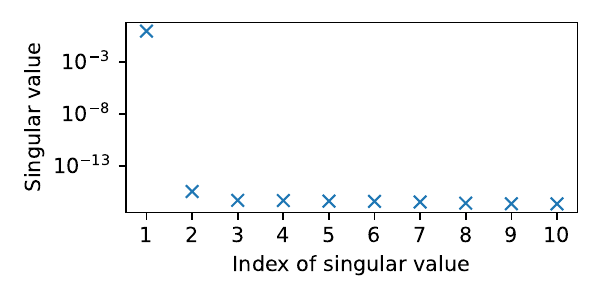}
\includegraphics[width=.48\textwidth]{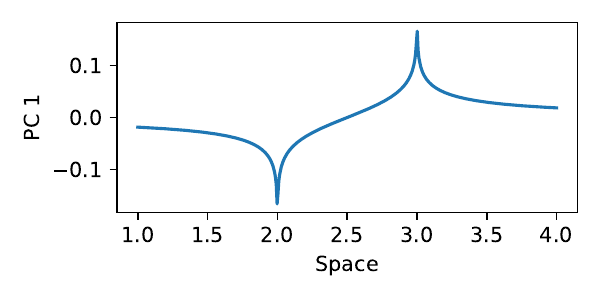}
\caption{\label{fig:LogAbsKEF_1Dperfect_PCA}Left: Energy (singular values) of the principal components of the logarithm dataset. Only one component is relevant, as expected---all the others have numerically zero energy. Right: the principal component associated to the largest singular value.}
\end{figure}
\paragraph{Approximation with EDMD.}
In practice, explicit forms of eigenfunctions are usually not available, and must be approximated from data. We thus now show how to approximate the eigenfunctions numerically, with EDMD, using a radial basis function dictionary (Gaussian kernels), centered at points in the region $[1, 4]$.
We already discussed that data in the neighborhood of each steady state tend to lead to computationally identified eigenfunctions ``corresponding'' to the linearization around that steady state.
Computations combining data across the entire interval tend to provide very poor eigenfunction approximations due to the limitations of the dictionary. We therefore  separately approximate eigenfunctions around $x=a=2$ and $x=b=3$, and then try to map them {\em to each other} in the intermediate region $[a,b]$.
In practice, we cannot accurately approximate eigenfunctions close to their singularities. We thus only work in the regime where their values are comparatively small. 
In the regime where one eigenfunction approaches infinity, we can instead work with the eigenfunction that is its appropriate inverse power - which then approaches zero.
Fig.~\ref{fig:LogAbsKEF_1Dapprox_KEFleft} shows the approximations. Several spurious eigenfunctions are visible (shown are three in the last three columns), which are incorrectly identified.
\begin{figure}[ht!]
\centering
\includegraphics[width=1\textwidth]{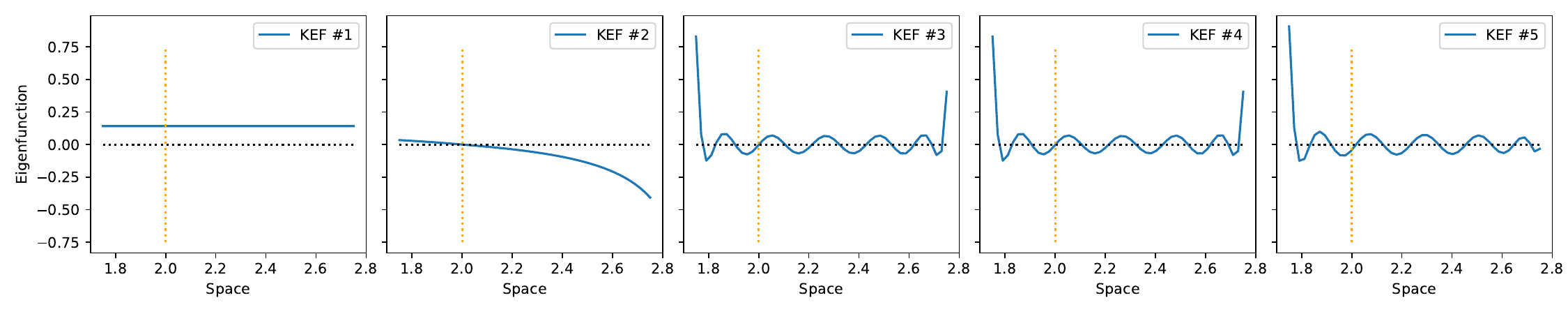}
\includegraphics[width=1\textwidth]{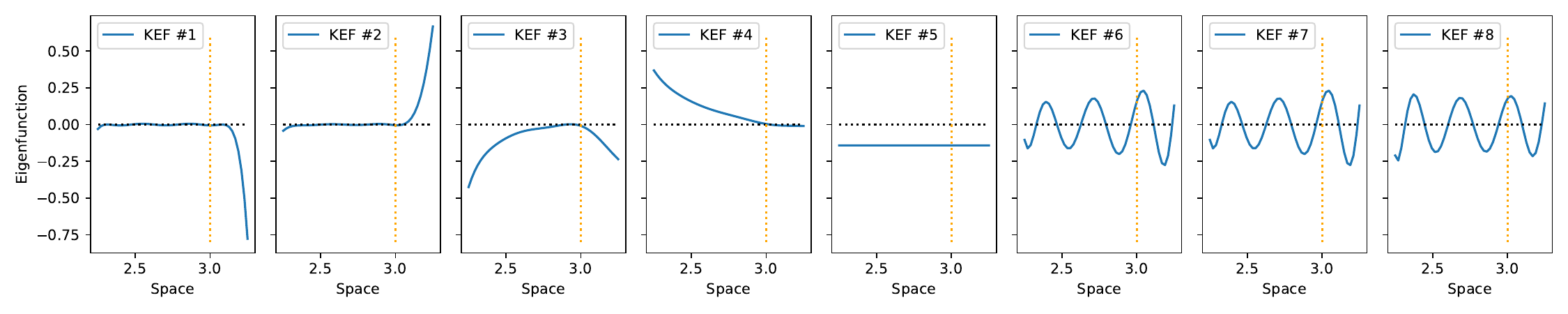}
\caption{\label{fig:LogAbsKEF_1Dapprox_KEFleft}Eigenfunctions approximated with EDMD (Gaussian radial basis function dictionary, kernel bandwidth $\epsilon=0.05$ and $\epsilon=0.15$ for top and bottom rows). Eigenfunctions associated to eigenvalues with nonzero imaginary part are excluded, and the eigenfunctions are normalized to have norm 1. The steady state is denoted by an orange vertical dashed line, indicating that the top row is computed around steady state $x=a=2$, the bottom row around steady state $x=b=3$. The last three  eigenfunctions in each row are spurious and incorrectly oscillate around zero.}
\end{figure}

In the last step, we try to find a linear map from one set of the eigenfunctions (associated with the left steady state, for example) to the other set (here, to the right one) {\em in logarithmic space}. This can only practically be identified (again, due to dictionary limitations) in the intermediate region of the interval between the steady states; we choose the domain $[2.25, 2.75]$. Fig.~\ref{fig:LogAbsKEF_1Dapprox_KEFlog_mapped_intermediate} (top) shows the logarithm of the absolute value of several non-spurious eigenfunctions in this region.
We then solve the following linear systems with a least squares method (incl. Tikhonov regularization) for the coefficients $\vect{c}=(c_1,c_2)$:
\begin{eqnarray*}
\log|\phi_{k_1}| c_1 &=& \log|\phi_{k_2}|,\\
\log|\phi_{k_2}| c_2 &=& \log|\phi_{k_1}|.
\end{eqnarray*}
The coefficients $c_1$ and $c_2$ allow us to map from one set of (logarithms of) eigenfunctions to the other. The result is shown in Fig.~\ref{fig:LogAbsKEF_1Dapprox_KEFlog_mapped_intermediate}.
\begin{figure}[ht!]
\centering
\includegraphics[scale=.38]{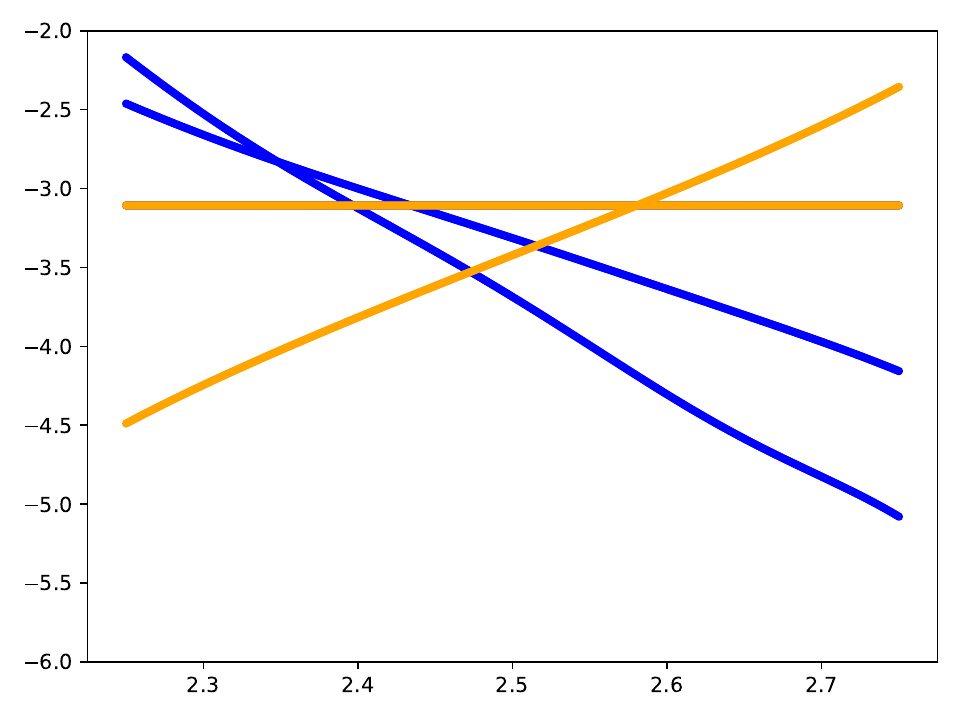}
\includegraphics[width=.9\textwidth]{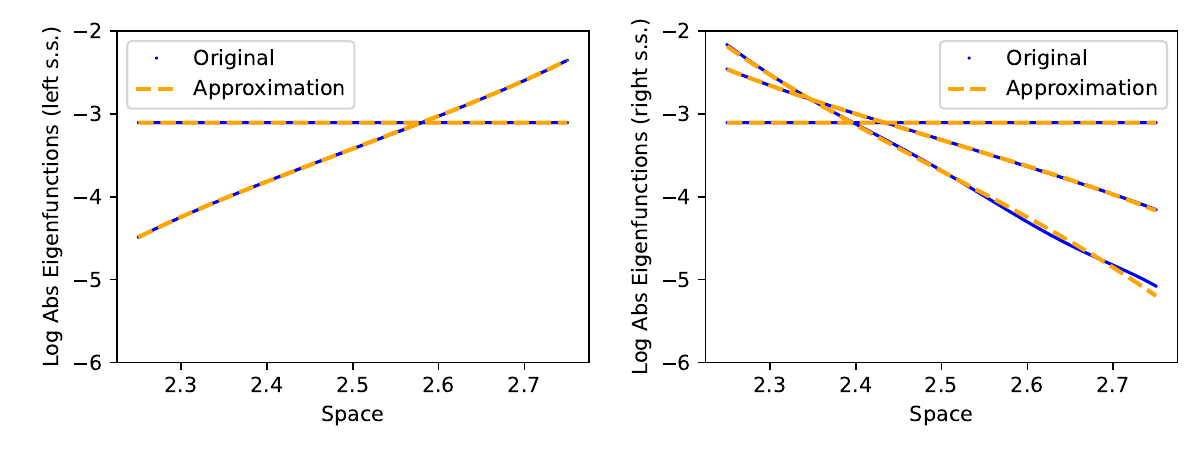}
\caption{\label{fig:LogAbsKEF_1Dapprox_KEFlog_mapped_intermediate}
Top: Logarithms of the absolute value of selected, non-spurious eigenfunctions in the intermediate region $[2.25, 2.75]$ between the steady states. The eigenfunctions associated to $a=2$ are orange, the ones associated to $b=3$ are blue.
Bottom: A linear map from the one set of eigenfunctions (associated to left/right steady state) to the other (right/left) provides a good approximation, meaning we can accurately construct the value of eigenfunctions ``on the other side of the steady state'' (not shown).}
\end{figure}

\subsection{Singularities in two dimensions and the use of isochrons}
We saw that in one dimension, Koopman eigenfunction singularities arise naturally as we approach adjacent steady states (which also naturally constitute the boundary of basins of attraction). 
We now proceed to explore singularities of Koopman eigenfunctions for continuous time dynamical systems {\em in two dimensions}. 
Here, the singularities will arise along entire one-dimensional curves: (a) separatrices, consisting of the stable manifolds of saddle steady states, or (b) limit cycles, surrounding a steady state in the neighborhood of which the Koopman eigenfunction computation is initiated.
Such a limit cycle is also the boundary of the basin of attraction of the steady state (e.g., forward in time if the steady state inside is stable, and the limit cycle unstable; or backward in time in the opposite case). 
In both cases, the important notion is that of an \textit{isochron}, first defined in terms of limit cycles~\cite{mauroy-2013}, but straightforward to also extend in the case of saddle-associated separatrices.
\begin{definition}(Dominant Koopman eigenvalue and eigenfunction)
Consider a dynamical system $ \dot{\vx}=F(\vx)$, where $ \vx \in \mathcal{M}$, and $ F \in C^2 $ in the neighborhood of an asymptotically stable equilibrium $\vx^*$. That is, the eigenvalues $\lambda_j$ of the Jacobian matrix $J(\vx^*)$ satisfy $\text{Re}(\lambda_j) < 0 \quad \text{for all } j$.
In this case, the eigenvalues $\lambda_j$ are directly related to the eigenvalues $\exp(t\lambda_j)$ of the associated Koopman operator ${K}^t_{F^t}$. 

The dominant eigenvalue \(\lambda_1\) is defined as the eigenvalue satisfying
\begin{align}
     \text{Re}(\lambda_1) > \text{Re}(\lambda_j) \quad \text{for all } \lambda_1 \neq \lambda_j.
\end{align}
We assume the existence of such an eigenvalue, which is guaranteed in monotone systems~\cite[Chapter 5.5]{hirsch-2006}.
The eigenfunction associated with $\lambda_1$ is referred to as the dominant eigenfunction and is denoted by $\varphi_{\lambda_1}$.

Under these assumptions, the dominant eigenfunction $\varphi_{\lambda_1}$ can be computed using the Laplace average:
\begin{align}
    f^*_{\lambda_1} (\vx) = \lim_{T \to \infty} \frac{1}{T} \int_0^T \big( f \circ F^t(x) \big) e^{-\lambda_1 t} dt,
\end{align}
    where $f \in C^1$  satisfies $f(\vx^*) = 0$ and $(\nabla f(\vx^*))^T v_1 \neq 0$,
    with  $v_1$  being the right eigenvector of the system Jacobian $J(\vx^*) $ corresponding to $\lambda_1$. The function  $f^*_{\lambda_1} $ represents the dominant eigenfunction $\varphi_{\lambda_1}(\vx)$ up to a scalar multiple \cite{mauroy2020koopman}. 

    For a system with a stable limit cycle $\Gamma$, the dominant Koopman eigenfunction associated with the linearization {\em around the limit cycle} is linked with the leading non-trivial Floquet exponent $\lambda_1$, which governs the slowest transverse decay of trajectories towards the cycle. In an action-angle coordinate system, the function $f_{\lambda_1}^*$ provides a global action coordinate that captures the attractivity of the limit cycle.
\end{definition}


For a dynamical system in $d$ dimensions with an exponentially stable limit cycle $\Gamma$ of period $\omega \in \mathbb{R}^+$, there exist Koopman eigenfunctions~\cite[Chapter 15]{mauroy2020koopman} associated with eigenvalues $\lambda_i$, $i=1,\dots,d$, such that $\lambda_1=\omega$, and $\lambda_i$ are the $(d-1)$ Floquet exponents.
\begin{definition}(Isostables, \cite[Chapter 15]{mauroy2020koopman})\label{def:isostables}
The {\em isostables} of the system are level sets of the Koopman eigenfunctions associated to the Floquet exponents.
Formally, for any value $r\in\mathbb{R}$ and any point $\vx$ in the state space $\mathcal{M}$, an isostable associated to one of the eigenvalues $\lambda_i$ is the set
\begin{align}
    \mathcal{I}_r = \{ \vx \in \mathcal{M} : |\varphi_{\lambda_i}(\vx)| = e^{r \lambda_i} \},
\end{align}
where $\varphi_{\lambda_i}(\vx)$ is the principal Koopman eigenfunction associated with the eigenvalue $\lambda_i$.

For an equilibrium, isostables describe trajectories that decay at the same exponential rate towards $\vx^*$. For a limit cycle, isostables represent sets of points that converge to the cycle with the same phase-independent transient behavior. Isostables provide a global partition of the state space and are closely linked to the stable and unstable manifolds of the system invariant sets, offering valuable insights into the transient dynamics of the system. They can be computed throughout the entire basin of attraction of an attractor using methods such as Laplace averages~\cite{mauroy-2013}.
\end{definition}
\begin{definition}(Isochrons)
    The isochrons of the limit cycle are the level sets of the Koopman eigenfunction $\varphi_{i\omega }$, formally defined as:
    \begin{align}
   \mathcal{I}_\theta = \{ \vx \in \mathcal{M} \mid \angle \varphi_{i\omega}(\vx) = \theta \}, \quad \theta \in [0,2 \pi ).     
    \end{align}
    Each isochron represents a set of initial conditions that asymptotically converge to the same phase on the limit cycle. The evolution of these sets follows the periodic relation $(F^t)^k(\mathcal{I}_\theta) = \mathcal{I}_{\theta + \omega t \, \, (\text{mod} \, 2\pi)}$. In particular, after a full period, the isochrons satisfy $(F^t)^k(\mathcal{I}_\theta) = \mathcal{I}_\theta$,  where $k = \frac{2\pi}{\omega}$ \cite{mauroy2020koopman}.

    Isochrons provide a global phase coordinate system for the dynamics, ensuring that all points on a given isochron approach the same asymptotic phase on the attractor.
\end{definition}

\subsubsection{Isochrons and isostables of limit cycles}\label{sec:example - isochrons of limit cycles}
Consider the Van der Pol system with the parameter $\mu=0.3$. For this parameter value, the system exhibits a stable limit cycle, shown in red in \Cref{fig:isochrons and isostables of limit cycles}, surrounding an unstable (source, focus) steady state.
This figure also illustrates the isochrons (left) and isostables (right) for this Van der Pol system.
Isochrons and isostables are (separately) Koopman eigenfunctions for the system; 
here, they are both anchored on the linearization around the stable limit cycle and their eigenvalues embody the attractivity of the limit cycle as well as its period.
Clearly, the isostable component goes to zero on the limit cycle and to infinity at the steady state in the middle (see \Cref{fig:LC:3d with bottom contour}). 
A negative power of the isostable would then go to infinity on the limit cycle. 

If we also computed the isochrons and isostables associated with the unstable spiral steady state inside the limit cycle, we would encounter a situation similar to the one in the interval between different steady states (different invariant objects) in one dimension. There, we used  the ratios of respective eigenvalues to transform between eigenfunctions. Doing this in the context of two dimensional systems with limit cycles is the context of current research (see~\cite{langfield-2015} for possible structures for ``inner'' and ``outer'' isochrons).

Inside the limit cycle, trajectories converge to it and cannot escape or cross it. Therefore, it is important to examine the region interior, and the one exterior of the limit cycle, along with how these two areas are connected.
%
%
%
%
One effective method for approximating them in the entire domain is to use Laplace averaging~\cite{mauroy-2013} of the observable $g(x_1,x_2)=\sin(x_1+x_2)$.
\begin{figure}[!htbp]
    \centering
    \includegraphics[scale=.5]{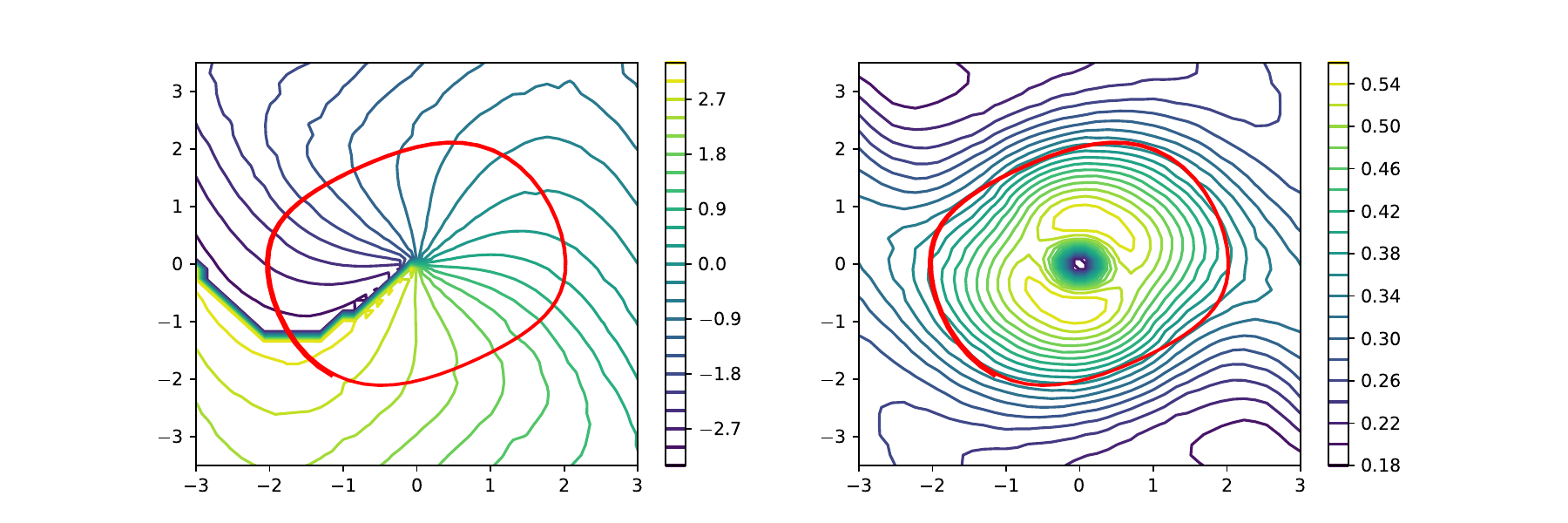}
    \caption{Isochrons (left) and isostables (right) associated with the limit cycle of the Van der Pol system ($\mu=0.3$), computed with Laplace averaging of the observable $g(x_1,x_2)=\sin(x_1+x_2)$.}
    \label{fig:isochrons and isostables of limit cycles}
\end{figure}
%

\subsubsection{Transformation of trajectories across the limit cycle}\label{sec:transformation across limit cycle}
We now describe a procedure to transform trajectories {\em from within the limit cycle} invertibly into trajectories outside of the limit cycle. The key idea is to represent the trajectories in terms of eigenfunctions (isochrons and isostables), and to construct the transformation by transforming between eigenfunctions ``inside'' and eigenfunctions ``outside'' of the limit cycle. 


The procedure works as follows.
\begin{enumerate}
    \item Compute two complex-valued eigenfunctions $\phi_1,\phi_2:\mathbb{R}^2\to\mathbb{C}$ (cf.~\Cref{fig:limitcycle mapping}, leftmost and rightmost plot), where $\angle\phi_1$ and $\angle\phi_2$ are isochrons associated  {\em with  the limit cycle} and eigenfunctions associated {\em with the steady state}, respectively, and $|\phi_1|$, $|\phi_2|$ are the corresponding isostables. The computation of $\phi_1$ is performed in the neighborhood of the limit cycle, the computation of $\phi_2$ in the neighborhood of the steady state.
    \item Extend the functions $\phi_1,\phi_2$ as much as possible (accurately) within the limit cycle, i.e., $\phi_1$ so that its domain approaches the steady state, and $\phi_2$ so that its domain approaches the limit cycle. This is possible by solving the Koopman PDE with the method of characteristics. Note that $|\phi_1|$ approaches infinity towards the steady state (cf. \Cref{fig:isochrons and isostables of limit cycles}, right), while $|\phi_2|$ is zero at the steady state and approaches infinity towards the limit cycle.
    \item Define an invertible transformation function $T_i:\mathbb{C}\to\mathbb{C}$ (subscript $i$ for ``inside'') between the values of $\phi_1$ and the values of $\phi_2$ within the limit cycle. This is possible, because both $\phi_1$ and $\phi_2$ separately can be used to represent the open 2D domain within the limit cycle (excluding it and the steady state).
    \item As isostables (and isochrons) of the limit cycle are defined on either side of it, and both sets have levels from zero to infinity, it is possible to identify isostables inside with isostables outside (cf.~\Cref{fig:limitcycle mapping}, rightmost plot, black circles). Using this identification, it is possible to create a separate transformation $T_o$ (subscript $o$ for ``outside'') that (a) maps isostables ``inside'' the limit cycle to isostables ``outside'' the limit cycle invertibly and (b) does so across the isochrons with identical values inside and outside.
    \item For any trajectory within the limit cycle, it is possible to (a) transform it from Cartesian coordinate representation into an isochron/isostable representation (i.e., range of $\phi_2$), then (b) map this representation to the corresponding one in the range of $\phi_1$ using $T_i$, and finally (c) map this representation to isochrons/isostables outside of the limit cycle using $T_o$. 
\end{enumerate}
\begin{figure}[!htbp]
    \centering
    \includegraphics[width=0.95\textwidth]{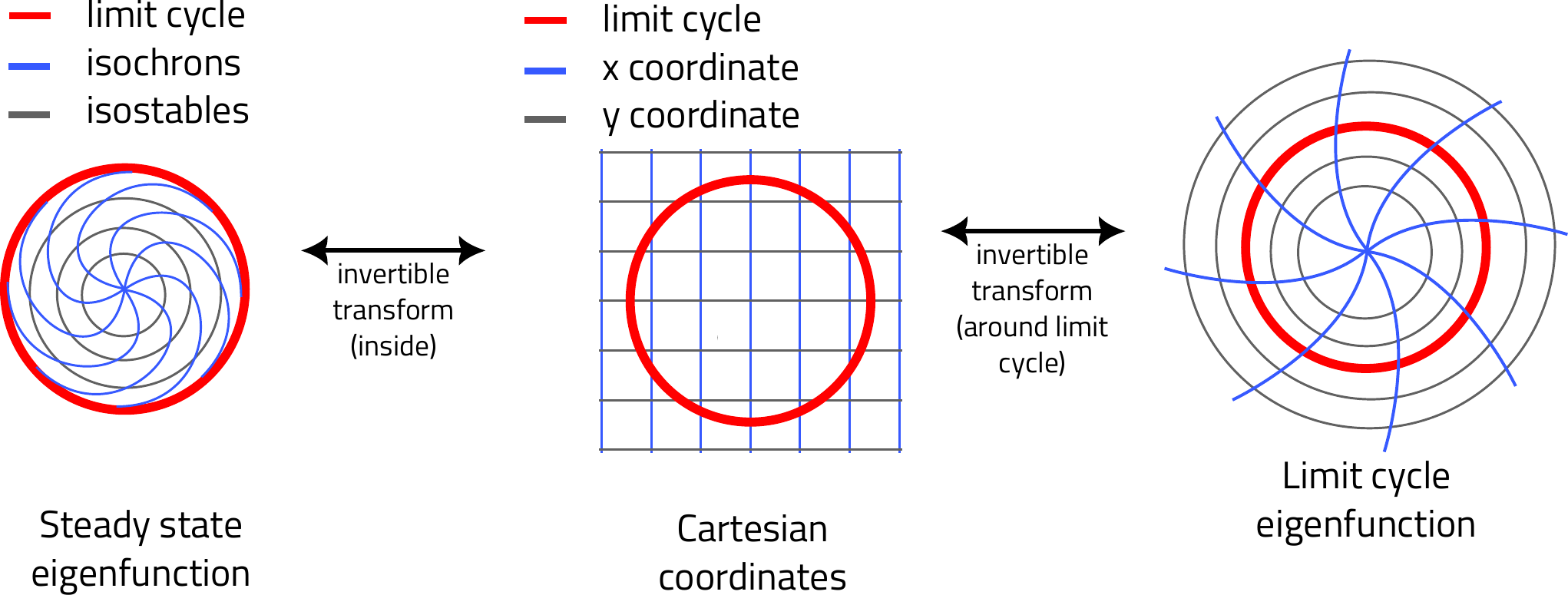}
    \caption{\label{fig:limitcycle mapping}Isochrons and isostables (combined as a single complex-valued Koopman eigenfunction) can be used to construct invertible transforms inside the limit cycle, by transforming to the Cartesian coordinates $(x,y)$. The transformation for the steady state breaks down close to the limit cycle, while the transformation for the limit cycle breaks down close to the steady state.}
    \label{fig:enter-label}
\end{figure}
We now provide explicit formulations for the eigenfunctions and the transformations in a specific example.

{  
\begin{example}
Consider a planar system (in polar coordinates) given by
\begin{align}\label{eq:polar_system}
\begin{cases}
\dot{r} \, = \,  r(\mu - r^2)
\\[1ex]
\dot{\theta} \, = \, \omega + \alpha\,(r - \sqrt{\mu}), \quad \mu,\omega>0, \alpha \in \mathbb{R}   
\end{cases}.
\end{align}
At $ r=0 $, the origin is an unstable equilibrium, and the circle $r = \sqrt{\mu} \,$ is a stable limit cycle (Fig. \ref{fig:LC:SS}).
\begin{figure}[!htbp]
    \centering
\includegraphics[width=0.285\textwidth]{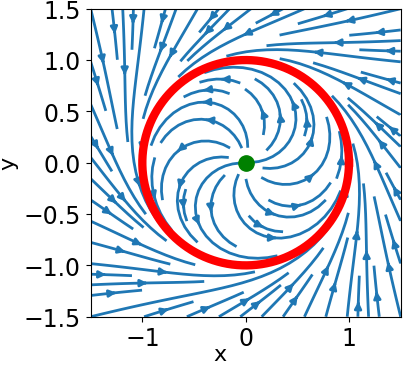}
    \centering
\includegraphics[width=0.35\textwidth]{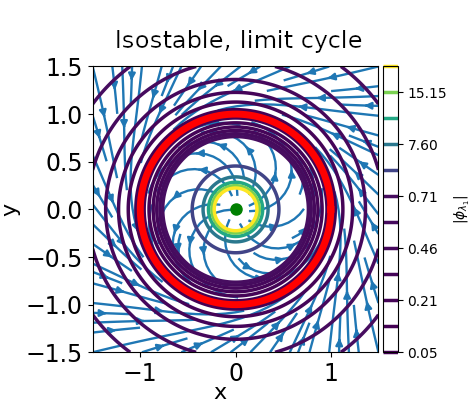}
\includegraphics[width=0.35\textwidth]{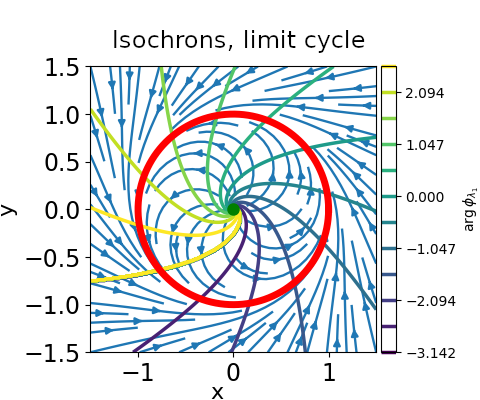}
\includegraphics[width=0.35\textwidth]{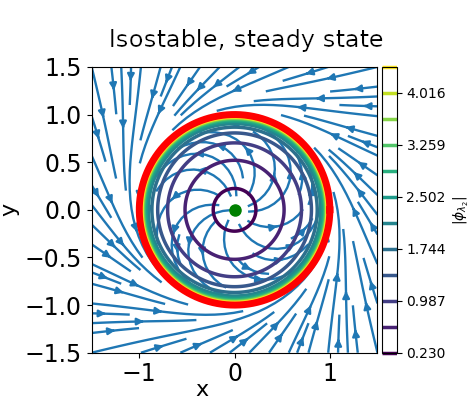}
\includegraphics[width=0.35\textwidth]{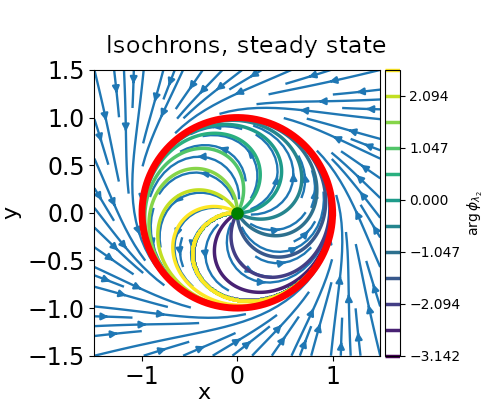}
    \caption{Top: From left to right, the phase portrait of system \eqref{eq:polar_system}, for $\mu = \omega = \alpha = 1$,  with an unstable steady state (green point) and a stable limit cycle (red circle), along with trajectories (light blue). Isostables and isochrons of the limit cycle (top, for $C = 1$) and the steady state (bottom row) are shown in four separate plots. Bottom: The isostables and isochrons of the steady state (for $C = 1$). The way this example is constructed means the linearization of the limit cycle and the steady state are identical, for general dynamical systems with limit cycles this is not the case.}
    \label{fig:LC:SS}
\end{figure}
\begin{figure}[!htbp]
    \centering
\includegraphics[width=1\textwidth]{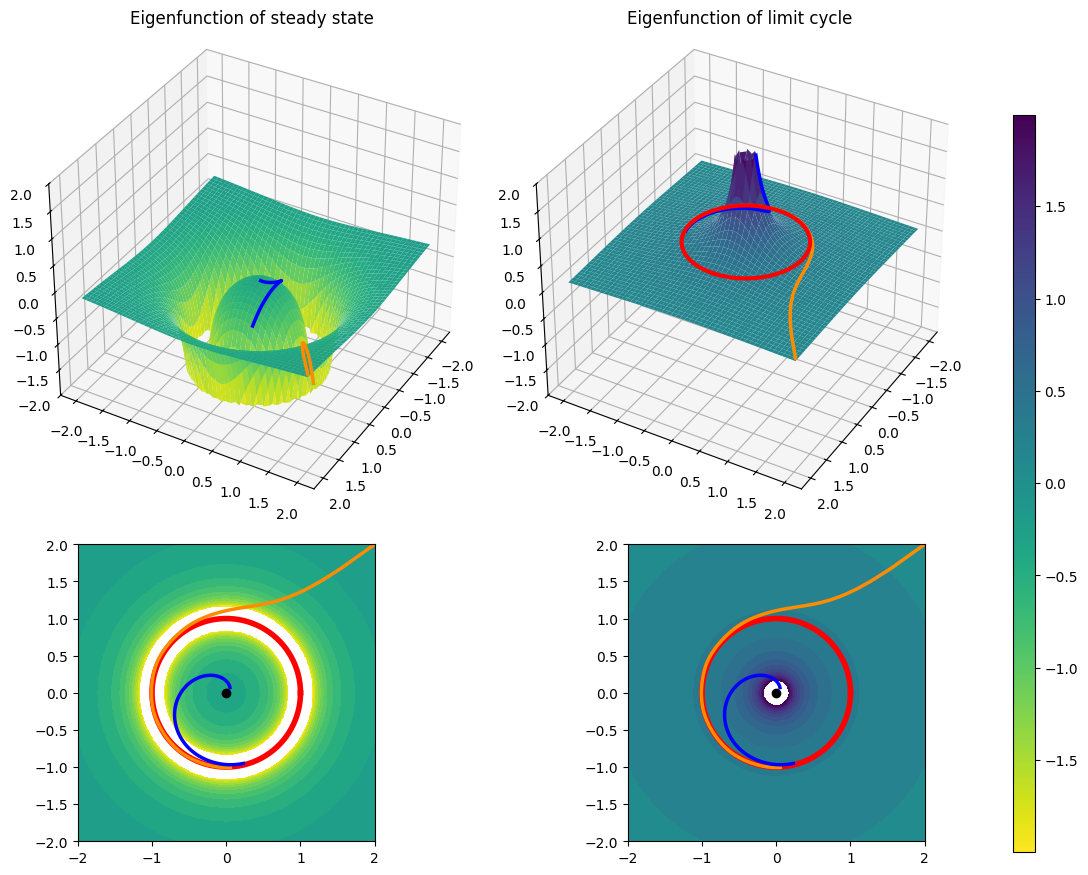}
    \caption{Isostables (Koopman eigenfunctions) for a limit cycle system, plotted in three and two dimensions. Two representative trajectories are shown, with the z-coordinate equal to the respective isostable value (blue inside, orange outside of limit cycle).
    Left: An isostable associated to the steady state. It approaches (negative) infinity on the limit cycle, and is zero on the steady state (also see Fig.~\ref{fig:LC:SS}, bottom left). Outside of the limit cycle, it is extended using the limit cycle isostable.
    Right: An isostable associated to the limit cycle. It is zero on the cycle, and approaches infinity towards the steady state (also see Fig.~\ref{fig:LC:SS}, top, center).}
    \label{fig:LC:3d with bottom contour}
\end{figure}

~\\
\textbf{I. Koopman eigenfunctions :} 
\\[1ex]
The separable eigenfunctions of the form 
\begin{align}
\phi_{\lambda,k}(r,\theta)= h_{\lambda,k}(r)\,e^{ik\theta},  \hspace{.3cm} k \in \mathbb{Z}, 
\end{align}
with eigenvalue $ \lambda \in \mathbb{C}$, can be obtained by solving the Koopman PDE using the method of characteristics: 
\begin{align}
\phi_{\lambda,k}(r,\theta) \,=\, C\;\Bigl(\dfrac{r^2}{|\mu - r^2|}\Bigr)^{\frac{\lambda - ik\omega}{2\mu}}
\;\Bigl(\dfrac{\sqrt{\mu} + r}{r}\Bigr)^{-ik\alpha/ \sqrt{\mu}} \, e^{ik\theta}.   
\end{align}
We consider $C>0$ for the sake of simplicity. Using linearization around the limit cycle $r = \sqrt{\mu } \, $ yields a radial Floquet exponent $ -2\mu$ and angular frequency $\omega$. Thus, for $k=1$ and $\lambda_1 = -2\mu + i\omega$, the eigenfunction associated with the limit cycle becomes
\begin{align}
\phi_{\lambda_1}(r,\theta) \,=\, C\; \dfrac{|\mu - r^2|}{r^2} \cdot \;\Bigl(\dfrac{\sqrt{\mu} + r}{r}\Bigr)^{-i\alpha/ \sqrt{\mu}} \, e^{i\theta}, \hspace{.4cm} C>0.   
\end{align}
Therefore, the isostables of the limit cycle are 
\begin{align}\label{eq:isostables:limit}
\mathcal{I}_{r, \text{LC}} \, = \, | \phi_{\lambda_1}(r,\theta) | \,=\,  C\; \dfrac{|\mu - r^2|}{r^2}, 
\end{align}
and the corresponding isochrons are given by
\begin{align}
\mathcal{I}_{\theta, \text{LC}} \, = \, \angle \phi_{\lambda_1}(r,\theta)  \,=\, \arg(\phi_{\lambda_1}(r,\theta)) \, = \, \theta - \dfrac{\alpha}{\sqrt{\mu}} \, \ln \Bigl(\dfrac{\sqrt{\mu} + r}{r}\Bigr) \hspace{.3cm} (\text{mod} \, \, 2 \pi).
\end{align}
Both isostables and isochrons of the limit cycle are defined {\em on either side of it}. For isostables, we have $\mathcal{I}_{r, \mathrm{LC}} = 0$ on the limit cycle $r = \sqrt{\mu}$. We consider two branches corresponding to the regions inside and outside the limit cycle ($0 < r< \sqrt{\mu} \,$ and $r>\sqrt{\mu}$):
\begin{itemize}
    \item \emph{Interior branch:} $\phi_{\lambda_1}\big|_{(0,\sqrt\mu)\times [0,2 \pi )}$ where $|\mu-r^2|=\mu-r^2>0$. On this punctured open disk  
$$  \phi_{\lambda_1}\big|_{(0,\sqrt\mu)\times [0,2 \pi )}:(0,\sqrt\mu)\times [0,2 \pi )\;\xrightarrow{\;\text{bijective}\;}\; (0, \infty)\times [0,2 \pi ),$$
  and $\mathcal{I}_{r,\mathrm{LC}}(r)= C \big(\frac{\mu}{r^2}-1 \big)\in(0,\infty)$.
    \item \emph{Exterior branch:} $\phi_{\lambda_1}\big|_{(\sqrt\mu,\infty)\times [0,2 \pi )} $ where $|\mu-r^2|=r^2-\mu>0$. On this exterior region
$$
  \phi_{\lambda_1}\big|_{(\sqrt\mu,\infty)\times [0,2 \pi )} :(\sqrt\mu,\infty)\times [0,2 \pi )\;\xrightarrow{\;\text{bijective}\;}\; (0, C)\times [0,2 \pi ),
$$
  and $\mathcal{I}_{r,\mathrm{LC}}(r)=C \big(1 - \frac{\mu}{r^2} \big) \in(0, C)$.
\end{itemize}
Moreover
\begin{itemize}
    \item If $\alpha = 0$, then the isochrons are $\angle \phi_{\lambda_1}(r,\theta)  \,=\, \theta \,$ , which correspond to straight radial lines.
    \item If $\alpha \neq 0$, then the isochrons are curved lines.
\end{itemize}

Similarly, linearization around the unstable steady state $r=0$ gives a radial exponent $\mu$ and angular frequency $\omega - \alpha \sqrt{\mu}$. Hence, for $k=1$ and $\lambda_1 = \mu + i(\omega- \alpha \sqrt{\mu})$, the eigenfunction associated with the steady state is 
\begin{align}
\phi_{\lambda_2}(r,\theta) \,=\, C\; \dfrac{r}{\sqrt{\mu - r^2}} \; \cdot \Bigl(\dfrac{\sqrt{\mu} + r}{\sqrt{\mu - r^2}}\Bigr)^{-i\alpha/ \sqrt{\mu}} \, e^{i\theta}, \hspace{.4cm} 0 \leq r < \sqrt{\mu}, \hspace{.4cm} C>0.   
\end{align}
The isostables of the steady state can be expressed as
\begin{align}\label{eq:isostables:steady}
\mathcal{I}_{r, \text{SS}} \, = \, | \phi_{\lambda_2}(r,\theta) | \,=\,  C\; \dfrac{r}{\sqrt{\mu - r^2}} \in [0, \infty), \hspace{.4cm} 0 \leq r < \sqrt{\mu}, 
\end{align}
and the associated isochrons are 
\begin{align}
 \mathcal{I}_{\theta, \text{SS}} \, = \, \angle \phi_{\lambda_2}(r,\theta)  \,=\, \arg(\phi_{\lambda_2}(r,\theta)) \, = \, \theta - \dfrac{\alpha}{\sqrt{\mu}} \, \ln \Bigl(\dfrac{\sqrt{\mu} + r}{\sqrt{\mu - r^2}}\Bigr) \hspace{.3cm} (\text{mod} \, \, 2 \pi), \hspace{.4cm} 0 \leq r < \sqrt{\mu},
\end{align}
which are straight radial lines for $\alpha = 0$. Furthermore, $\phi_{\lambda_2}$ is defined for $0\le r<\sqrt\mu$ (inside the limit cycle) and satisfies $\phi_{\lambda_2}(0,\theta)=0$. Its restriction to $(0,\sqrt\mu)\times [0,2 \pi )$ is a bijection onto $\mathbb C\setminus\{0\}$, while the full map is \emph{surjective} onto $\mathbb C$ but not injective at $0$ (all $\theta$ map to $0$).\\

From eq. \eqref{eq:isostables:limit} and eq. \eqref{eq:isostables:steady}, we can see that the isostables $| \phi_{\lambda_1}(r,\theta) |$ of the limit cycle are zero at the limit cycle and approach infinity toward the steady state inside (and toward infinity outside). In contrast, the isostables $| \phi_{\lambda_2}(r,\theta) |$ of the steady state are zero at the steady state and approach infinity towards the limit cycle. For more details, see Fig. \ref{fig:LC:SS}. \\[2ex]
\textbf{II. Invertible transformation $T_i$ :}
\\[1ex]
For fixed parameters $\mu, C>0$, and $\alpha\in\mathbb{R}$, the maps $\, \phi_{\lambda_1}\big|_{(0,\sqrt\mu)\times [0,2 \pi )} : (0,\sqrt{\mu}) \times [0,2 \pi )  \longrightarrow  \mathbb{C} \setminus \{0\} \, $ and $\, \phi_{\lambda_2}\big|_{(0,\sqrt\mu)\times [0,2 \pi )} : (0,\sqrt{\mu}) \times [0,2 \pi )  \longrightarrow \mathbb{C} \setminus \{0\} \,$
are bijective on  $(0,\sqrt{\mu})\times [0,2 \pi )$ 
inside the limit cycle on the punctured open disk where the steady state and the limit cycle are excluded. The transformation $T_i$ can be built by composition as 
\begin{align}\nonumber
& T_i :=   \phi_{\lambda_2}\big|_{(0,\sqrt\mu)\times [0,2 \pi )} \circ  \left(\phi_{\lambda_1} \big|_{(0,\sqrt{\mu}) \times [0,2 \pi )}\right)^{-1}: \, \mathbb{C} \setminus \{0\} \; \longrightarrow \; \mathbb{C} \setminus \{0\}  
 \\
& T_i(z) \, = \, \sqrt{\frac{C^3}{|z|}}\exp\!\Big(i\big[\arg z+\tfrac{\alpha}{2\sqrt\mu}\ln \left(\tfrac{|z|}{C} \right)\big]\Big),
\end{align}
which is invertible with the inverse 
\begin{align}\nonumber
& T_i^{-1} = \phi_{\lambda_1}\big|_{(0,\sqrt\mu)\times [0,2 \pi )} \circ  \left(\phi_{\lambda_2} \big|_{(0,\sqrt{\mu}) \times [0,2 \pi )}\right)^{-1}: \, \mathbb{C} \setminus \{0\} \; \longrightarrow \; \mathbb{C} \setminus \{0\}  
 \\
& T_i^{-1}(v)  \, = \, \frac{C^3}{|v|^2}\exp\!\Big(i\big[\arg v - \tfrac{\alpha}{\sqrt\mu}\ln \left(\tfrac{C}{|v|} \right)\big]\Big),
\end{align}
which implies that we can transform the values of $\phi_1$ and the values of $\phi_2$ within the limit cycle.
\\[2ex]
\textbf{III. Invertible transformation $T_o$ :}
\\[1ex]
We construct a transformation $T_o$ that maps the values of interior isostables of the limit cycle to their exterior counterparts while preserving the associated isochron values. 
For the isostables of the limit cycle, we have
\begin{align}
\mathcal{I}_{r, \mathrm{LC}} \,=\, \; 
\begin{cases} 
C \left(\dfrac{\mu}{r^2}-1 \right) \in (0,\infty), & \text{for} \, \, 0 < r < \sqrt{\mu} \; \; (\text{inside}) \\[2ex]
C \left(1- \dfrac{\mu}{r^2}\right) \in (0, C), &  \text{for} \, \, r > \sqrt{\mu} \; \; (\text{outside})
\end{cases}.
\end{align}
If we define a monotone bijection
\begin{align}\nonumber
 \tilde{T}_o :  \;  (0, \infty)  \times & [0,2 \pi ) \;\longrightarrow\; (0, C)  \times [0,2 \pi )  \\[1mm]
& (r, \theta)  \;\longmapsto\; 
\left( \dfrac{C \, r}{1+r}, \; \theta \right),
\end{align}
then the map 
\begin{align}\nonumber
 T_o \; = \; 
 \big(\phi_{\lambda_1}|_{(\sqrt\mu, \infty)\times [0,2 \pi )}\big)^{-1} \circ \; \tilde{T}_o \; \circ \phi_{\lambda_1}\big|_{(0,\sqrt\mu)\times [0,2 \pi )}  :  \;  (0, \sqrt{\mu}) \times [0,2 \pi ) \;\longrightarrow\; (\sqrt{\mu}, \infty)  \times [0,2 \pi ),
\end{align}
defines a bijection between the interior and exterior regions of the limit cycle while preserving the
Koopman phase along the isochrons by construction.
\\[2ex]
\textbf{IV. Mapping a trajectory inside the limit cycle to exterior isostables/isochrons:}
\\[1ex]
Transforming trajectories within the limit cycle to trajectories outside of it is possible if both isostables and isochrons are available. If only isochrons of limit cycle and steady state are available, then CIFT-bifurcations~\cite{langfield-2015} can prevent this type of mapping.
In the former case, any trajectory $\mathbf{x}(t) = (r(t),\theta(t))$ inside the limit cycle, with $0 < r(t) < \sqrt{\mu} \,$ and $t \in I$, can be mapped into an isochron/isostable representation of the steady state using the eigenfunction $\phi_{\lambda_2}$. Next, this representation can be transferred to the isochron/isostable representation {\em associated with the limit cycle} by applying $T_i^{-1}$. Finally, we can map this representation to isostables/isochrons {\em outside of the limit cycle} using $\big(\phi_{\lambda_1}|_{(\sqrt\mu, \infty)\times [0,2 \pi )}\big)^{-1} \circ \; \tilde{T}_o$. 
Thus the overall transformation is
\begin{align}
(r,\theta) 
\;\longmapsto\;  \big(\phi_{\lambda_1}|_{(\sqrt\mu, \infty)\times [0,2 \pi )}\big)^{-1} \circ \; \tilde{T}_o \circ T_i^{-1} \circ \phi_{\lambda_2} \big|_{(0,\sqrt{\mu}) \times [0,2 \pi )}(r,\theta).
\end{align}

\end{example}
}


\subsection{Isochrons for systems with saddle points}\label{sec:example - saddle isochrons}

Figures~\ref{fig_eigenfunctions_ss}--\ref{fig_eigenfunctions_combination_3d} show Koopman eigenfunctions of the bi-stable system (\Cref{system:nonlinear2D}) whose two basins of attraction are separated by the stable manifold of the saddle. Starting with each of the stable steady states, and constructing the Koopman eigenfunctions based on their linearization, we can clearly see these eigenfunctions going to infinity at the separatrix. 
Two important questions arise: is there a way of extending each one of them across the separatrix to the ``other'' basin? Is there Is there (and if yes, can we construct) an analogy with the power transformation in one dimension?
To explore this, we first performed a simple numerical experiment in a linear system that only possesses a single saddle (not multiple basins of attractions, and accordingly no separatrices). 
The results are shown in \Cref{fig:eigenfunction for linear system with saddle}. We visualize that the level sets of the two principal eigenfunctions of the system intersect the stable/unstable manifolds of the saddle exactly in the same way that isochrons and isostables would for a limit cycle.
Here, the limit cycle corresponds to the unstable manifold of the saddle, the level sets of the eigenfunction associated to its dynamics (\Cref{fig:eigenfunction for linear system with saddle}, left) corresponds to the isochrons, and the level sets of the eigenfunction associated to the stable dynamics (\Cref{fig:eigenfunction for linear system with saddle}, left) corresponds to the isostables converging to the limit cycle.
Points on these level sets will, backward in time, approach the unstable eigendirection of the saddle 
and will asymptotically approach backward in time the point at which the level sets cross the unstable eigendirection. In that sense, these level sets are isochrons of the unstable eigendirection. They will help us bridge eigenfunctions defined on alternate sides of it.
For more complicated systems with limit cycles (cf. section~\ref{sec:transformation across limit cycle}) or separatrices (cf. section~\ref{sec:example - saddleSystem}), the same concept can, in principle, be applied.

\subsubsection{Isochrons for a nonlinear systems with a single saddle point}

 Consider a nonlinear 2D system with a saddle at the origin (red dot, with trajectories as gray curves in \Cref{fig:eigenfunction for linear system with saddle}). For this saddle, we can construct one eigenfunction associated with the unstable direction/manifold and another associated with the stable direction/manifold, both of which precisely correspond to the saddle's eigenvalues by construction.

 To construct this example, we start with a linear system associated with the eigenvalues $(\lambda_1,\lambda_2)=(-1,1.5)$ of a saddle point and then transform the state space to obtain a nonlinear system. For the original linear system, the eigenfunctions are analytically available. Here, this linear system is defined by
\begin{eqnarray*}
    \dot{x}_1&=&\lambda_1 x_1,\\
    \dot{x}_2&=&\lambda_2 x_2.
\end{eqnarray*}
Hence, the eigenfunctions are simply $\phi_1(x)=x_1$, $\phi_2(x)=x_2$, and associated to the eigenvalues $\lambda_1,\lambda_2$ of the saddle.
To calculate these eigenfunctions for the saddle in the nonlinear system, a nonlinear transformation (diffeomorphism) is used to transform the state space. Here, we choose $x\mapsto \exp(\frac{1}{\pi} R x) - (1,1)^T$, with $R$ a matrix that rotates $x$ by $60$ degrees and the exponential function applied coordinate-wise. This also transforms the eigenfunctions (through the same function).
%
\Cref{fig:eigenfunction for linear system with saddle}  shows these transformed Koopman eigenfunctions associated with the stable and unstable manifolds of the saddle point of the nonlinear system. 
If we take any points on one contour line associated with the unstable manifold (left plot), they will diverge in the same manner towards infinity in the unstable direction. The right plot illustrates the parameterization of the stable direction. All points on one of the contour lines will converge backwards in time to the same point on the stable manifold.  This is the analog to isochrons being associated with limiting points on the limit cycle, and to isostables being associated with convergence to the cycle. 

\begin{figure}[!htbp]
    \centering
\includegraphics[width=.7\textwidth]{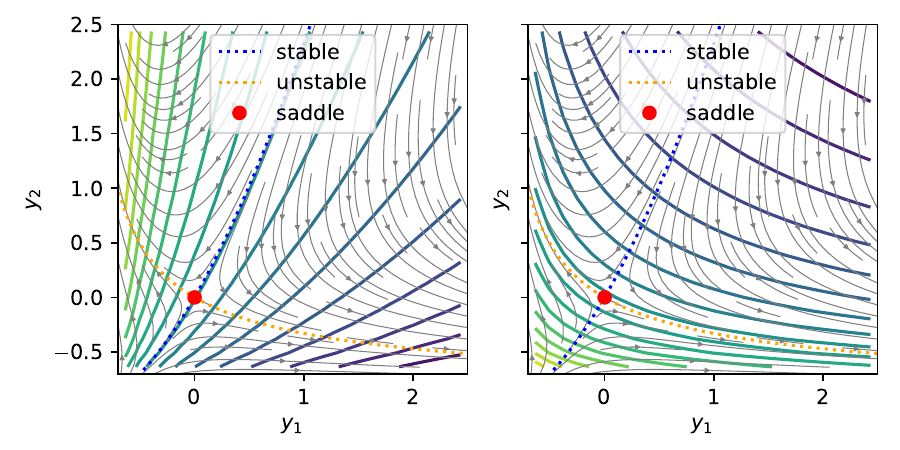}
    \caption{Koopman eigenfunctions (contour lines) of a system with a saddle (red dot) at the origin, and example trajectories as thin, gray curves in the background. The eigenfunctions  
    associate level sets of the state space with points on the unstable (left) and stable (right) manifold of the saddle. }
    \label{fig:eigenfunction for linear system with saddle}
\end{figure}

 \FloatBarrier

\subsubsection{
Separatrices for multistable systems}\label{sec:example - saddleSystem}

%
Consider now a 2D nonlinear system (c.f.~\Cref{fig:2d saddle point nonlinear}, left) given by 
\begin{align}\label{system:nonlinear2D}
\begin{cases}
\dot{y}_1 \, = \, - (y_1 - 1/4)(y_1 + 1/4) \, y_1, \\
\dot{y}_2  \, = \, - y_2
\end{cases}.
\end{align}
This system has a single saddle point at $P_0 = (0, 0)$ 
with eigenvalues $\lambda_{0,1} = 1/16$ and $\lambda_{0,2} = -1$, and two stable nodes at $P_1 = ( -1/4, 0 )$ and $P_2 = ( 1/4, 0 )$, both having eigenvalues
%
 $\lambda_{1,1} = \lambda_{2,1} = -1/8$ and $\lambda_{1,2} = \lambda_{2,2} = -1$. 
To make the discussion more visually compelling, we transform this system using an analytic, non-linear diffeomorphism 
\begin{align}
   \begin{pmatrix}
   x_1 \\ x_2    
   \end{pmatrix} 
=  \vx = h (\vy) = 2\left(\begin{matrix}
    y_1+y_1^4 +2 y_1^2y_2+y_2^2\\y_1^2+y_2
\end{matrix}\right).
\end{align}
The inverse of $h$ is also analytic, and given by
\begin{align}
    \begin{pmatrix}
   y_1 \\ y_2    
   \end{pmatrix} 
= \vy =h^{-1}(\vx)=\left(
\begin{matrix}
    x_1/2-(x_2/2)^2\\
    -(x_1/2)^2+x_2/2 +1/4 (x_1 x_2^2)-(x_2/2)^4
\end{matrix}
\right).
\end{align}

The transformed system has a saddle at $\Tilde{P}_0 = (0, 0)$, and two stable nodes at $\Tilde{P}_1 = ( 0.5078125,  1/8 )$ and $\Tilde{P}_2 = ( -0.4921875,  1/8 )$.
Nearly all initial conditions - except those on the saddle point's stable manifold - will converge towards one of the stable equilibria. The stable manifold of the saddle point is represented by a green curve, and it constitutes the boundary separating the basins of attraction of the two stable nodes. Meanwhile, the two sides of the unstable manifold of the saddle are shown in red and asymptote each to one of the two stable nodes (Fig. \ref{fig:2d saddle point nonlinear}).
\begin{figure}[!htbp]
    \centering
\includegraphics[width=1\textwidth]{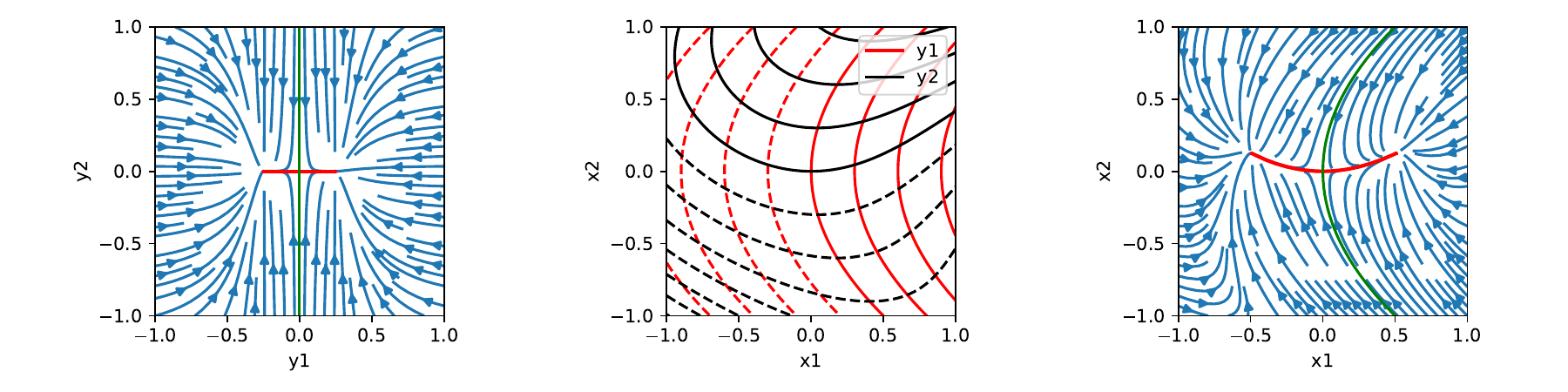}
    \caption{Phase portrait of the 2D nonlinear system \eqref{system:nonlinear2D} with two stable nodes and one saddle. The stable and unstable manifolds of the saddle point are depicted as green and red curves respectively. The left plot shows the system in original coordinates $\vy$, the center plot shows the nonlinear transformation $h^{-1}$ from $\vx$ to $\vy$, and the right plot shows the transformed system in $\vx$ coordinates.}
    \label{fig:2d saddle point nonlinear}
\end{figure}
%
\begin{figure}[!htbp]
\centering
\includegraphics[width=.31\textwidth]{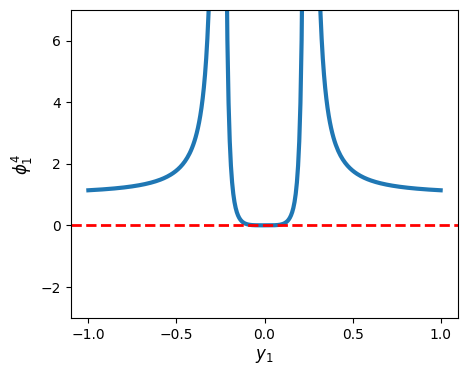}
\hspace{.5cm}
\includegraphics[width=.31\textwidth]{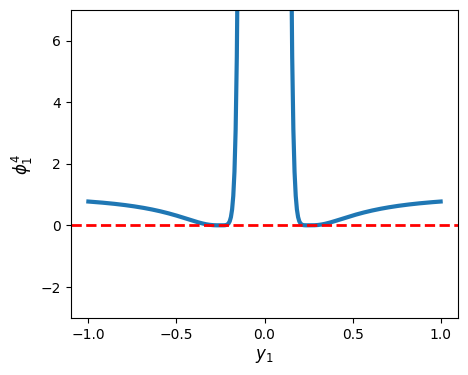}
\hspace{.5cm}
\includegraphics[width=.31\textwidth]{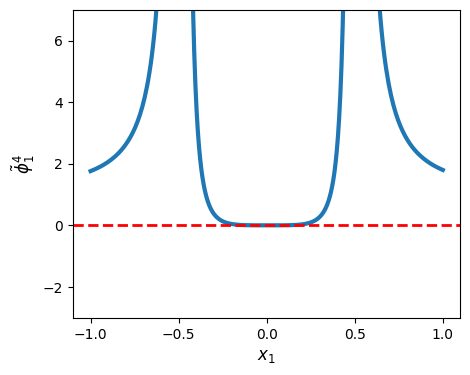}
\hspace{.5cm}
\includegraphics[width=.31\textwidth]{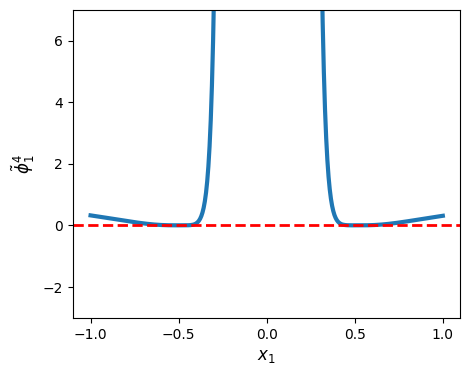}
\caption{Koopman eigenfunction $\phi_1^4$ of the system~\eqref{system:nonlinear2D} in the coordinates $\vy=(y_1,y_2)$ for 
$\lambda_1 = 1/16$ (top left); and 
$\lambda_3 = -1/8$ (top right). 
The transformed eigenfunction $\tilde{\phi}_1^4$ is represented in the coordinates $\vx=(x_1,x_2)$ for 
$\lambda_1 = 1/16$  (bottom left); and 
$\lambda_3 = -1/8$ (bottom right).}\label{fig_eigenfunctions_ss}
\end{figure}
\begin{figure}[!htbp]
\centering
\includegraphics[width=.46\textwidth]{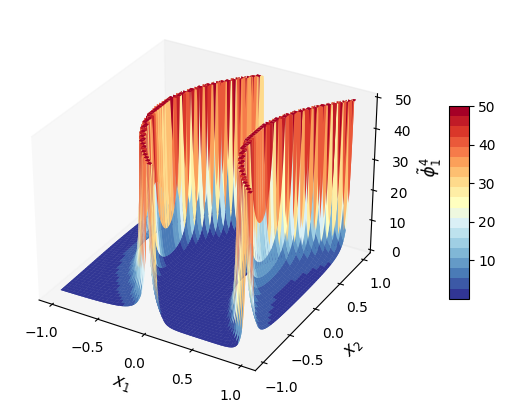}
\includegraphics[width=.46\textwidth]{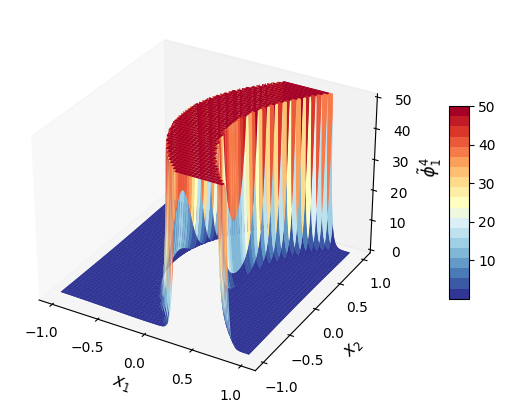}
\caption{The transformed eigenfunction $\tilde{\phi}_1^4$ in 3D space with coordinates $(x_1,x_2, \tilde{\phi}_1^4)$ for 
$\lambda_1 = 1/16$ (left); and 
$\lambda_3 = -1/8$ (right). Note that the functions extend to positive infinity, but the plot shows a flat plateau at $\phi=50$ (where our plotting saturates) instead.}\label{fig_eigenfunctions_ss_3d}
\end{figure}
\begin{figure}[!htbp]
\centering
\includegraphics[width=.32\textwidth]{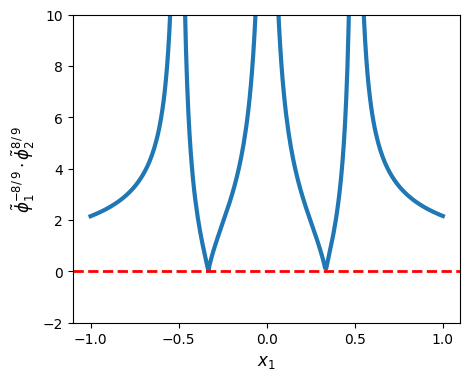}
\hspace{.5cm}
\includegraphics[width=.32\textwidth]{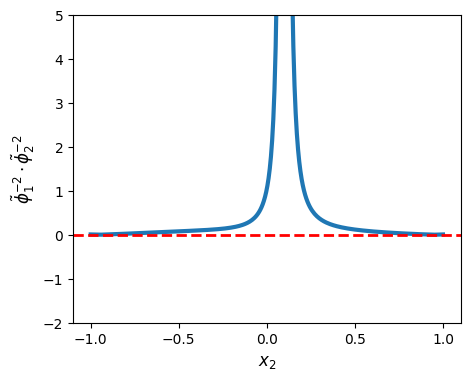}
\caption{Combinations of the transformed eigenfunctions $\tilde{\phi}^{m_1,m_2}_{12} $ in the coordinates $\vx=(x_1,x_2)$ for $\, m_1 = -8/9,  m_2 = 8/9 $ and 
$\lambda_3 = -1/8$ (left); and $\, m_1 =  m_2 = -2 $ and 
$\lambda_1 = 1/16$ (right).}\label{fig_eigenfunctions_combination}
\end{figure}
\begin{figure}[!htbp]
\centering
\includegraphics[width=.46\textwidth]{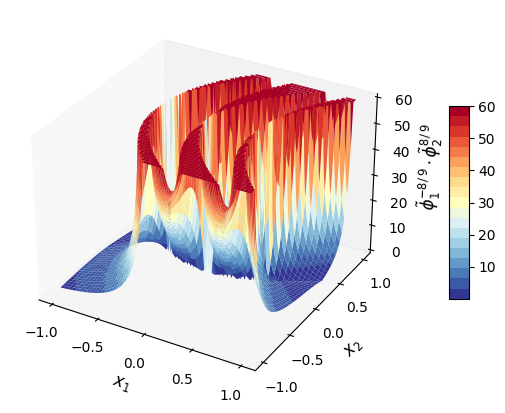}
\includegraphics[width=.46\textwidth]{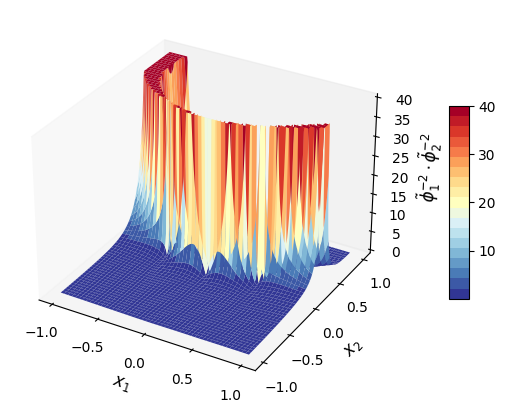}
\caption{Combinations of the transformed eigenfunctions $\tilde{\phi}^{m_1,m_2}_{12} $ in the coordinates in 3D space with coordinates $(x_1,x_2, \tilde{\phi}^{m_1,m_2}_{12})$ for $\, m_1 = -8/9,  m_2 = 8/9 $ and 
$\lambda_3 = -1/8$ (left); and $\, m_1 =  m_2 = -2 $ and 
$\lambda_1 = 1/16$ (right). }\label{fig_eigenfunctions_combination_3d}
\end{figure}
The system in $(x_1,x_2)$ coordinates is constructed through a diffeomorphism $h$ from a simpler system in $(y_1,y_2)$ coordinates, whose Koopman eigenfunctions can be constructed explicitly (and thus also transformed explicitly). 
Although the equilibrium points $P_0, P_1$ and  $P_2$ give rise to six local eigenvalues, there are only three distinct eigenvalues $\lambda_1 = 1/16, \lambda_2 = -1, \lambda_3 = -1/8 $, since some of them are equal.
For system~\eqref{system:nonlinear2D} in coordinates $\vy=(y_1,y_2)$, 
Koopman eigenfunctions $\phi_i$, associated with the distinct eigenvalues $\lambda_i, i= 1, 2, 3$, can be constructed separately for each coordinate, so that   
\begin{align}
\phi^k_{1}(\vy) &\, = \, \Big|(y_1 - 1/4)(y_1 + 1/4) \, y_1^{-2} \Big|^{ k \, \lambda_i/ \lambda_3}, \, \, i=1,3, \hspace{.3cm} k\in\mathbb{Z},
\\[1ex]\nonumber
\text{and}
\\[1ex]
   \phi_2^k(\vy) &\, = \, \big|y_2 \big|^{- k \, \lambda_2} = \,  \big|y_2\big|^k, \hspace{.3cm} k\in\mathbb{Z}.
\end{align}
Of course, by Proposition~\ref{prop: multiplicative_prop}, all combinations $\phi_{12}^{m_1,m_2}=\phi_1^{m_1} \cdot \phi_2^{m_2}$ with $m_1,m_2\in\mathbb{R}$ are also eigenfunctions. \\

The eigenfunctions of the transformed system in $\vx =(x_1,x_2)$ are given by 
\begin{align}
    \tilde{\phi}^{k}_1(\vx) &\, =  \, \phi_1\circ h^{-1}(\vx) 
    \\ \nonumber  
  &\, =  \, \Big|\big(x_1/2-(x_2/2)^2 - 1/4 \big) \big(x_1/2 -(x_2/2 )^2 + 1/4\big) \, \big(x_1/2-(x_2/2)^2 \big)^{-2} \Big|^{k \, \lambda_i/ \lambda_3},
  \\ \nonumber  
  &  
  \, \, \, i=1,3, \hspace{.3cm} k\in\mathbb{Z},
    \\[1ex]\nonumber
& \text{and}
    \\[1ex]
  \tilde{\phi}^{k}_2(\vx) &\, = \, \phi_2\circ h^{-1}(\vx) = \Big| -(x_1/2)^2+x_2/2+ 1/4 (x_1 x_2^2)-(x_2/2)^4 \Big|^k, \hspace{.3cm} k\in\mathbb{Z}.
\end{align}
Similarly, all combinations $\tilde{\phi}^{m_1,m_2}_{12}=  \tilde{\phi}^{m_1}_1 \cdot \tilde{\phi}^{m_2}_2 \, $ for $\, m_1,m_2\in \mathbb{R}$ are again eigenfunctions for the transformed system. See the eigenfunctions plotted in Figures~\ref{fig_eigenfunctions_ss}--\ref{fig_eigenfunctions_combination_3d} and Figure~\ref{fig_eigenfunctions_saddle_four} in Appendix \ref{appendix:fig_bistable}.
%

%
\FloatBarrier
\section{Discussion} \label{sec:discussion}
%
We introduce an approach to numerically construct a large family of eigenfunctions of the Koopman operator, given a small number of eigenvectors from a Koopman matrix, approximated using extended dynamic mode decomposition.
Because integer powers amplify numerical errors exponentially fast, we also introduce error metrics to track how many new eigenfunctions we can credibly (i.e. accurately enough) construct.
We demonstrate the approach in several computational experiments.

The construction of a larger set of eigenfunctions has several benefits. Since integer powers are typically not linearly related to each other, the newly constructed eigenfunctions help to span a larger subspace of functions over the data. This allows us to approximate general observables of dynamical systems more accurately.
Another important observation concerns the construction of eigenfunctions across singularities. We demonstrate how such eigenfunctions arise and how they approach infinity as the input approaches the singularity.
We then show how numerical approximations of the functions on both sides of singularities allow us to construct numerical approximations of the entire function, across the singularity.
%
If we used neural networks with singularities to approximate such eigenfunctions, we would need tools such as rational activation functions~(see \cite{derevianko-2025} for a recent pre-print; \cite{boulle2020rational} discuss rational functions, but without singularities).

The representation of trajectories in terms of eigenfunctions poses challenging new questions for systems with bifurcations related to those eigenfunctions (e.g., so-called ``cubic isochron foliation tangency'' bifurcations, CIFT, see~\cite{langfield-2015}). Such qualitative changes of the system are only visible through tangential crossing of isochrons, but not apparent in the system's dynamic behavior. 
The representation of the state in terms of isochron eigenfunctions becomes singular at the bifurcation point, and thus may provide a path to analyze this behavior.

\backmatter

%
\section*{Declarations}

\begin{itemize}
\item \textbf{Funding}:
The authors are grateful to Prof. Alex Townsend for several comments on the manuscript. F.D. was partially funded by the German Research Foundation/DFG, project 468830823. Z.M. is grateful to the Bundesministerium für Forschung, Technologie und Raumfahrt (BMFTR, Federal
Ministry of Research, Technology and Space) for funding through project OIDLITDSM, No. 01IS24061. 
The work of IGK was partially supported by the US Department of Energy and the US National Science Foundation.\\[1ex]
\item \textbf{Conflict of interest/Competing interests}:
The authors declare that they have no Conflict of interest.
\\[1ex]
\item \textbf{Ethics approval and consent to participate}: Not applicable.
 \\[1ex]
\item \textbf{Consent for publication}:
Not applicable. \\[1ex]
\item \textbf{Data availability}: No datasets were generated during the current study.
\\[1ex]
\item \textbf{Materials availability}:
Not applicable.\\[1ex]
\item \textbf{Code availability}: All codes developed in this study will be made publicly available upon publication.
\\[1ex]
\item \textbf{Author contribution}: F.D.; Y.G.K. devised the main ideas, Z.M.; F.D.; Y.G.K. devised the experiments and wrote the paper; S.M. and S.H. conducted experiments and computed the numerical error estimates.
\end{itemize}

 \bibliography{sn-bibliography}

%

\newpage
\appendix
\section*{Appendix}
\section{Ergodic dynamical systems}\label{sec:ergodic system}

Ergodic systems provide a setting in which the Koopman operator is well-studied. For these dynamical systems, the evolution function $T$ is invertible and measure-preserving, i.e., there exists an invariant measure $\mu$ such that for any $S \in \mathfrak{B}$,
$\mu(S) = \mu(\flow^{-1}(S)),$
where $\flow^{-1}(S)$ is understood as the pre-image of the set $S \in \mathfrak{B}$, i.e.,
$\flow^{-1}(S) := \{\vx \in \mathcal{M} \mid \flow(\vx) \in S\}.$
Ergodic theory then describes statistical properties of such dynamical systems and their long-term behavior.
The theory is developed in the context of measure-preserving transformations in measure theory. Ideas related to the Koopman operator are based on dynamical systems theory (work from Budi\v{s}i\'{c}, Mohr, and Mezi\'{c} \cite{applied-koopmanism:2012}), specifically, ergodic theory (work from Cornfeld, Fomin, and Sinai \cite{ergodic-theory:2012}). Ergodic theory has applications in physics, probability theory, and information theory. We now briefly introduce ergodic theory following Cornfeld, Fomin, and Sinai \cite{ergodic-theory:2012}.
A measure-preserving function $g$ is called \textit{invariant} with respect to the evolution function $\flow$ if for all $x\in \mathcal{M},$
$g(\flow(\vx)) = g(\vx) = g(\flow^{-1}(\vx)).$
If this is true almost everywhere instead of for all $\vx \in \mathcal{M}$, namely, if it is true for $\vx \in \mathcal{M}\setminus \{B \in \mathfrak{B} \mid \mu(B) = 0\}$, 
then it is said to be \textit{invariant} mod $0$. 
Analogously, a set $A \in \mathfrak{B}$ is called \textit{invariant}, or \textit{invariant} mod $0$, if the indicator function 
$\chi_A (\vx)$ (with $\chi_A(\vx)=1$ if $\vx \in A$ and $\chi_A(\vx)=0$ otherwise) is an invariant function, or an invariant mod $0$ function, respectively.
A dynamical system $(\mathcal{M}, \mathfrak{B}, \mu, \flow)$ is called \textit{ergodic} if the measure $\mu(A)$ of any invariant set $A$ equals $0$ or $1$. There are several equivalent conditions of ergodicity, and we use the following definition.
\begin{definition}(Equivalent definition of ergodic system)
    A dynamical system $(\mathcal{M}, \mathfrak{B}, \mu, \flow)$ is called \textit{ergodic} if any function $f\in L^2(\mathcal{M}, \mathfrak{B}, \mu)$ that is invariant with respect to the Koopman operator $\koop$ is constant almost everywhere.
\end{definition} 

\section{Algorithms}
\label{apx:algorithms}
\subsection{Eigensolvers}

\begin{algorithm}
\caption{Deflation based algorithm for real non-Hermitian matrix $A$ with real eigenvalues}
\label{alg: deflation_asymmetric}
\begin{algorithmic}[0]
    \State $i \gets 0$.
    \While{$i < n$}
     \State $\lambda_i, \vv_i \gets \text{power iteration}(A)$.
     \State $\lambda_i, \vw_i \gets \text{power iteration}(A^T)$.
     \State $\vw_i \gets \dfrac{\vw_i}{\vw_i^T \vv_i}$.
     \State $A \gets A - \lambda_i \vv_i \vw_i^T$.
     \State $i \gets i+1$.
    \EndWhile
\end{algorithmic}
\end{algorithm}

\begin{algorithm}
\caption{(power iteration complex) Arnoldi-based power iteration algorithm for real non-Hermitian matrix $A$ with complex dominant eigenvalues $\lambda_{max}, \bar{\lambda}_{max}$ and eigenvectors $\vv, \bar{\vv}$, given tolerance and maximum iterations $N$}
\label{alg: power_iteration_asymmetric_complex}
\begin{algorithmic}[0]
    \State $\vx \gets \text{power iteration}(A, \text{max iterations}=500)$.
    \State $\lambda_o \gets \infty$.
    \State $i \gets 0$.
    \While{$i < N$}
     \State $V[:,0] \gets \vx$.
     \State $h,V \gets \text{modified Gram-Schmidt}(A, V, \text{degree}=2)$.
     
     \State $\lambda_1,\lambda_2 \gets \text{eigenvalue2D}(h)$.
     \State $k  \gets \text{arg}\max_{1,2}\{\abs{\lambda_1}, \abs{\lambda_2}\}$.
     \State $\lambda_{max} \gets \lambda_k$
     \State $\vw \gets \text{eigvector2D}(h, \lambda_{max})$.
     
     \State $\vv \gets V[:\, 0:2]\vw $.
     \State $\vv \gets \dfrac{\vv}{\norm{\vv}}$.
    
     \If{$ \abs{\lambda_{max} - \lambda_o} < \text{tolerance}$}
        \State \textbf{break}.
        \Else
        \State $\lambda_o \gets \lambda_{max}$.
    \EndIf
     \State $i \gets i+1$.
    \EndWhile
\end{algorithmic}
\end{algorithm}

\section{Proofs of Propositions} \label{proofs}
\subsection{Proof of Proposition \ref{prop: upper_bound_continuous_prop}}\label{proof-prop: upper_bound_continuous_prop}
\begin{proof}
Using the definition of Koopman operator,
\begin{align*}
      \phi(F^{\Delta t}(\vx)) & = \phi(\vx^{\Delta t} + \varepsilon(x))\\
                            & \approx \phi(\vx^{\Delta t}) + \varepsilon(\vx)^T \nabla \phi(\vx^{\Delta t})\\
                            & = \lambda^{\Delta t} \phi(\vx) + \varepsilon(\vx)^T \nabla\phi(\vx^{\Delta t}).
\end{align*}

Using the bound on the spectral norm on the Jacobian of the dictionary basis $\Psi$, we get
\begin{align}
    \norm{\nabla\phi(\vx)} = \norm{J_{\Psi}(\vx) \vw}\leq \norm{J_{\Psi}(\vx)}_2 \norm{\vw} \leq L, \;\; \forall \vx \in G,
\end{align}
where $\norm{.}_2$ is the spectral norm. Then using the above equations, the Cauchy-Schwarz inequality, the Binomial theorem, and $\phi_p \equiv \phi^p$, we get
\begingroup
\allowdisplaybreaks
\begin{align*}
    E_{FG}(\bar{\phi}_p, \lambar_p)^{2 p} & =
    \norm{ \phibar_{p}(\flow^{\Delta t} (\cdot) ) - \lambar_{p} \phibar_{p}(\cdot)}_G^{2} =
    \norm{ \bar{\phi}(F^{\Delta t}(\cdot) )^p - (\lambar^{\Delta t} \bar{\phi}(\cdot))^p}_G^2 \\ 
    & = \frac{1}{\abs{G}}\sum_{x \in G} \bigg((\bar{\phi}(F^{\Delta t}(\vx)))^p - (\lambar^{\Delta t}\bar{\phi}(\vx)))^p\bigg)^2 \\
    & = \frac{1}{\abs{G}} \sum_{x \in G} \bigg((\lambar^{\Delta t} \bar{\phi}(\vx) + \varepsilon(\vx)^T \nabla \bar{\phi}(x^{\Delta t}))^p - (\lambar^{\Delta t} \bar{\phi}(\vx)))^p\bigg)^2\\
    & = \frac{1}{\abs{G}} \sum_{x \in G} \bigg( \sum_{k=0}^p \binom{p}{k} (\lambar^{\Delta t} \bar{\phi}(\vx))^{p-k} (\varepsilon(\vx)^T \nabla \bar{\phi}(x^{\Delta t}))^k  - (\lambar^{\Delta t}\bar{\phi}(\vx)))^p\bigg)^2\\
    & = \frac{1}{\abs{G}} \sum_{x \in G} \bigg( \sum_{k=1}^p\binom{p}{k}  (\lambar^{\Delta t} \bar{\phi}(\vx))^{p-k} (\varepsilon(\vx)^T \nabla \bar{\phi}(x^{\Delta t}))^k  \bigg)^2\\
    & =  \frac{1}{\abs{G}} \sum_{x \in G} \bigg( \sum_{k=1}^p \binom{p}{k}  (\lambar^{\Delta t} w^T \Psi(\vx))^{p-k} (\varepsilon(\vx)^T \nabla \bar{\phi}(x^{\Delta t}))^k  \bigg)^2\\
    & \leq \frac{1}{\abs{G}} \sum_{x \in G} \bigg( \sum_{k=1}^p \binom{p}{k}  (\lambar^{\Delta t})^{p-k} \norm{w}^{p-k} \norm{\Psi(\vx)}^{p-k} \norm{\varepsilon(\vx)}^k \norm{\nabla \bar{\phi}(x^{\Delta t})}^k  \bigg)^2\\
    & \leq \frac{1}{\abs{G}} \sum_{x \in G} \bigg( \sum_{k=1}^p \binom{p}{k} (\lambar^{\Delta t})^{p-k}  M^{p-k} \norm{\varepsilon(\vx)}^k L^k  \bigg)^2\\
    & \leq \frac{1}{\abs{G}} \sum_{x \in G}  \bigg( \sum_{k=0}^p \binom{p}{k}  (\lambar^{\Delta t})^{p-k}  M^{p-k} \norm{\varepsilon(\vx)}^k L^k  -  (\lambar^{\Delta t})^{p}M^p \bigg)^2\\
    & \leq \frac{1}{\abs{G}} \sum_{x \in G}  \bigg( \big( \lambar^{\Delta t}  M + L\norm{\varepsilon(\vx)} \big)^p -  (\lambar^{\Delta t})^{p}M^p \bigg)^2\\
    & = \norm{\big( \lambar^{\Delta t}  M + L\norm{\varepsilon(\cdot)}\big)^p -  (\lambar^{\Delta t} M)^{p}}_G^2\\
    & \leq \bigg( \big( \lambar^{\Delta t}  M + L\epsilon_G\big)^p -  (\lambar^{\Delta t} M)^{p} \bigg)^2.
\end{align*}
\endgroup
\end{proof}

\subsection{Proof of Proposition \ref{prop: upper_bound_discrete_prop}}\label{proof-prop: upper_bound_discrete_prop}
\begin{proof}
Using the identity $(a^p -b^p)^2 = (a - b)^2 \big(\sum_{i=0}^{p-1} a^{p - 1 - i}b^i\big)^2$, and the Cauchy–Schwarz inequality, we get
\begin{align*}
E_{FG}(\bar{\phi}_p, \lambar_p)^{2 p} & = \norm{ \phibar_p(F (\cdot) ) - \lambar_p\phibar_p(\cdot)}_G^2\\
    & = \norm{ \big(w_c^T \Psi (F (\cdot)\big)^p - \big(\lambar w_c^T \Psi(\cdot)\big)^p}_G^2\\
    & = \dfrac{1}{\abs{G}} \sum_{\vx \in G} \bigg( (w_c^T \Psi(F(\vx)))^p - ( w_c^T \lambar \Psi(\vx))^p  \bigg)^2\\
    & = \dfrac{1}{\abs{G}} \sum_{\vx \in G} \big(w_c^T \Psi(F(\vx)) - w_c^T \lambar \Psi(\vx))^2 \bigg(\sum_{i=0}^{p-1} (w_c^T \Psi(F(\vx)))^{p-1-i} (w_c^T \lambar \Psi(\vx))^i \bigg)^2\\
    & = \dfrac{1}{\abs{G}} \sum_{\vx \in G} (\delta w^T (\Psi(F(\vx)) - \lambar \Psi(\vx))^2 \ \bigg(\sum_{i=0}^{p-1} (w_c^T \Psi(F(\vx)))^{p-1-i} (w_c^T \Psi(\vx))^i \lambar^i \bigg)^2\\
    & \leq \dfrac{1}{\abs{G}} \sum_{\vx \in G} \norm{\delta w}^2 \norm{\Psi(F(\vx)) - \lambar \Psi(\vx)}^2 \bigg(\sum_{i=0}^{p-1} \norm{w_c}^{p-1-i}\norm{\Psi(F(\vx))}^{p-1-i} \norm{w_c}^i \norm{\Psi(\vx)}^i \lambar^i \bigg)^2\\
    & = \dfrac{1}{\abs{G}} \sum_{x \in G} \norm{\delta w}^2 \norm{\Psi(F(\vx)) - \lambar \Psi(\vx)}^2 \bigg(\sum_{i=0}^{p-1} \norm{\Psi(F(\vx))}^{p-1-i}  \norm{\Psi(\vx)}^i \lambar^i \bigg)^2\\
    & = \norm{\delta w}^2 \norm{  \norm{\Psi(F (\cdot)) - \lambar \Psi(\cdot)} \bigg(\sum_{i=0}^{p-1} \norm{\Psi(F (\cdot))}^{p-1-i}  \norm{\Psi(\cdot)}^i \lambar^i \bigg) }_G^2\\
    & =  C_{TG}(p, \lambar)^2 \norm{\delta w}^2.
\end{align*}
\end{proof}
\section{Analysis of real powers of complex numbers}\label{appendix:analysis of real powers}
Recall that the real-exponential of a complex number is defined for $\alpha \in \mathbb{R}$ and $z = re^{i\theta} \in \mathbb{C}, r, \theta \in \mathbb{R}$ as
$$z^\alpha := e^{\alpha \log(z)},$$
where the complex logarithm $\log(z)$ is defined as
$$\log(z) = \ln(r) + i(\theta + 2\pi k), k\in \mathbb{Z}.$$
Note that for $r=1$, $\log(z) = i(\theta + 2\pi k) \equiv \theta \pmod{2\pi}$ but $\log(e^x) \neq x$ in general. Furthermore, consider $e^{ik\alpha x} = e^{ix\pi + 2n\pi}, n\in \mathbb{Z}$. For $(e^{ik x})^\alpha$, take $\theta = \arg{e^{ik x}} = kx + 2n\pi \in [-\pi, \pi]$, and return $e^{i\theta \alpha}$. However, for $\alpha \in \mathbb{R}\setminus \mathbb{Z}$,
    $$e^{i\alpha x\pi} = (e^{(ix\pi + 2n\pi)})^\alpha = e^{i\alpha x\pi + 2\alpha n\pi} = e^{i\alpha x\pi + 2n'\pi}, n' = \alpha n \in \mathbb{R} \setminus \mathbb{Z}.$$
Thus, the angle is not unique $\pmod{2\pi}$.
The following is a small numerical experiment for calculating $e^{ik\alpha}$, where $i$ is the imaginary unit, $k \in \mathbb{Z}$, and $\alpha \in \mathbb{R}\setminus \mathbb{Z}$. As an example, consider $k = 4$ and $\alpha = 0.5$. Define the following three functions in Python:
\begin{itemize}
    \item $e1(x) = e^{ik\alpha x}$
    \item $e2(x) = (e^{i\alpha x})^k$
    \item $e3(x) = (e^{ikx})^\alpha$
\end{itemize}
These functions should represent the same function, however, the function, which does real power later, shows a different function as is shown in the following image.
Furthermore, consider for $f(\vx) = e3(\vx) = (e^{ikx})^\alpha$. Since $e^{ikx}$ is $\frac{2\pi}{k}$ periodic function (with respect to $x$), $f(\vx)$ is also $\frac{2\pi}{k}$ periodic. By definition, $f(\vx) = e^{\alpha \log(e^{ikx})} = e^{i\alpha (kx + 2n\pi)}$ with $n\in \mathbb{Z}, kx + 2n\pi \in [\pi, \pi)$. Thus, consider the value of $f(\vx)$ in one period, for $m\in \mathbb{Z}$,
\begin{align*}
    &\frac{2\pi}{k} m \leq x < \frac{2\pi}{k}(m+1)\\
    &\therefore 2\pi m \leq kx < 2\pi (m+1)\\
    &\therefore 2\pi (m+n) \leq kx + 2\pi n < 2\pi (m+n+1)\\
    &\therefore 2\pi (m+n) \alpha \leq (kx + 2\pi n) \alpha < 2\pi (m+n+1) \alpha\\
    &\therefore e^{2\pi (m+n) \alpha} \leq e^{(kx + 2\pi n) \alpha} = f(\vx) < e^{2\pi (m+n+1) \alpha}.
\end{align*}
Here, for small $\alpha \approx 0$, both $e^{2\pi (m+n) \alpha}$ and $e^{2\pi (m+n+1) \alpha}$ are close to $1$, and thus, the values $f(\vx)$ are close to $1$. Hence, for small $\alpha$, $f(\vx)$ always takes value close to $1$.
%
\section{Additional figures for the non-linear system \eqref{eq: nonlin_transformed_eqn} in Sect. \ref{sec:example - nonlinear system 2d}}\label{appendix:figures_NTLS}

\begin{figure}[htbp]
    \centering
    \includegraphics[width=.91\textwidth]{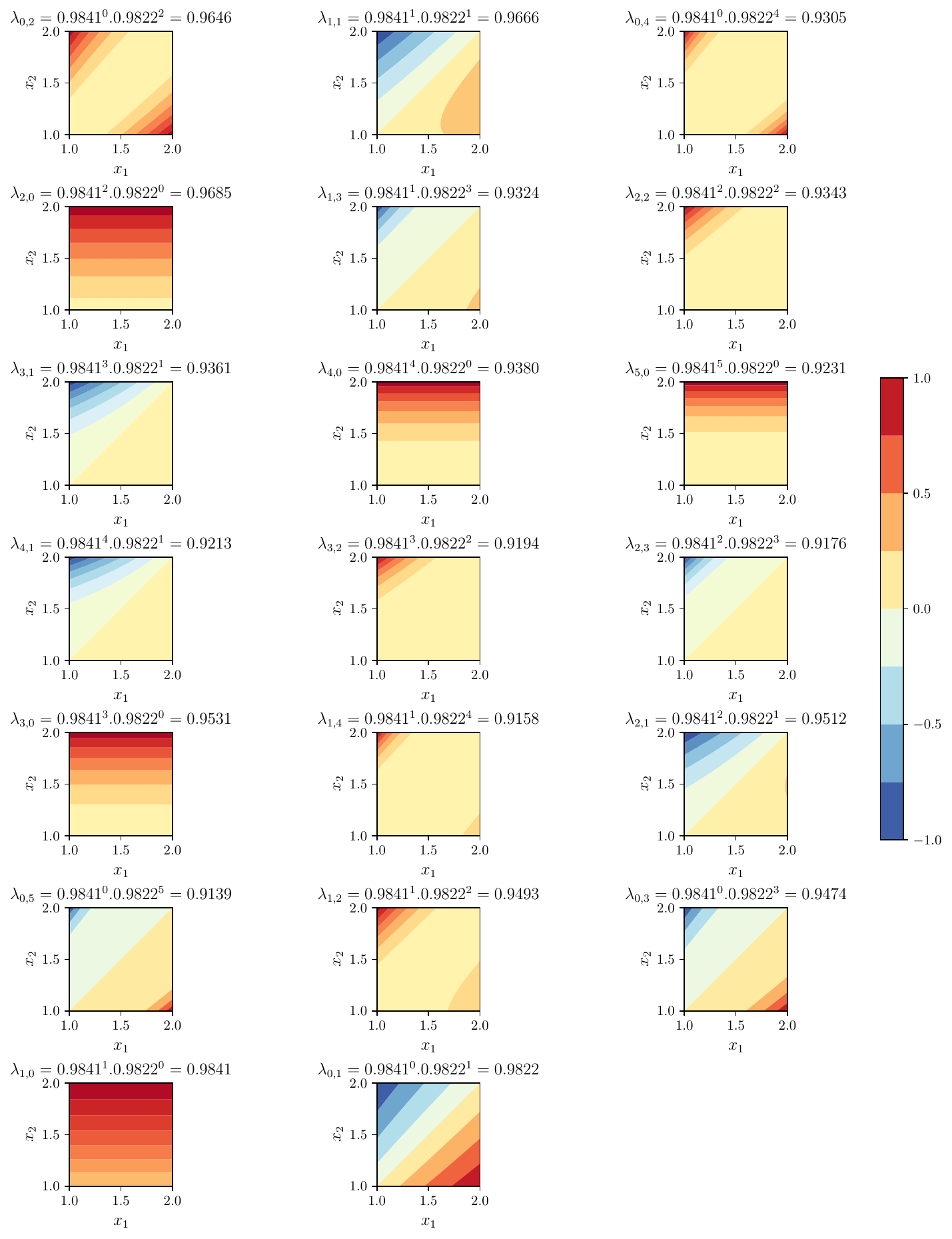}
    \caption{Explicit eigenfunctions for the non-linear system (\ref{eq: nonlin_transformed_eqn}) computed using $\phi \circ h^{-1}$.}
    \label{fig: explicit_eigenfunctions_nonlin_from_lin}
\end{figure}
\begin{figure}[htbp]
    \centering
    \includegraphics[width=.95\textwidth]{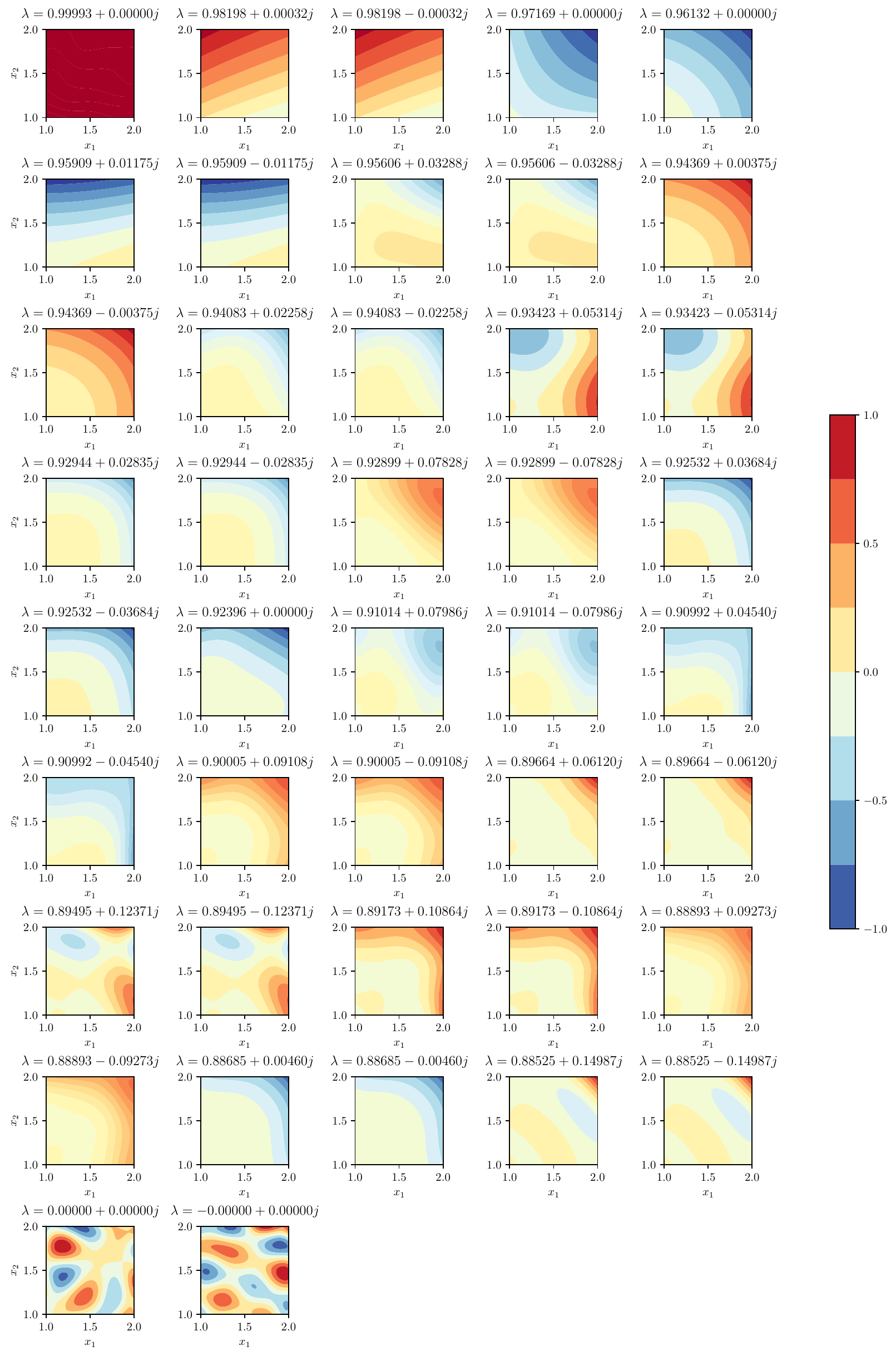}
    \caption{Eigenfunctions for the non-linear system (\ref{eq: nonlin_transformed_eqn}) approximated using EDMD.}
    \label{fig: edmd_eigenfunctions_nonlin_from_lin}
\end{figure}
\begin{figure}[ht]
    \centering
    \includegraphics[width=1\textwidth]{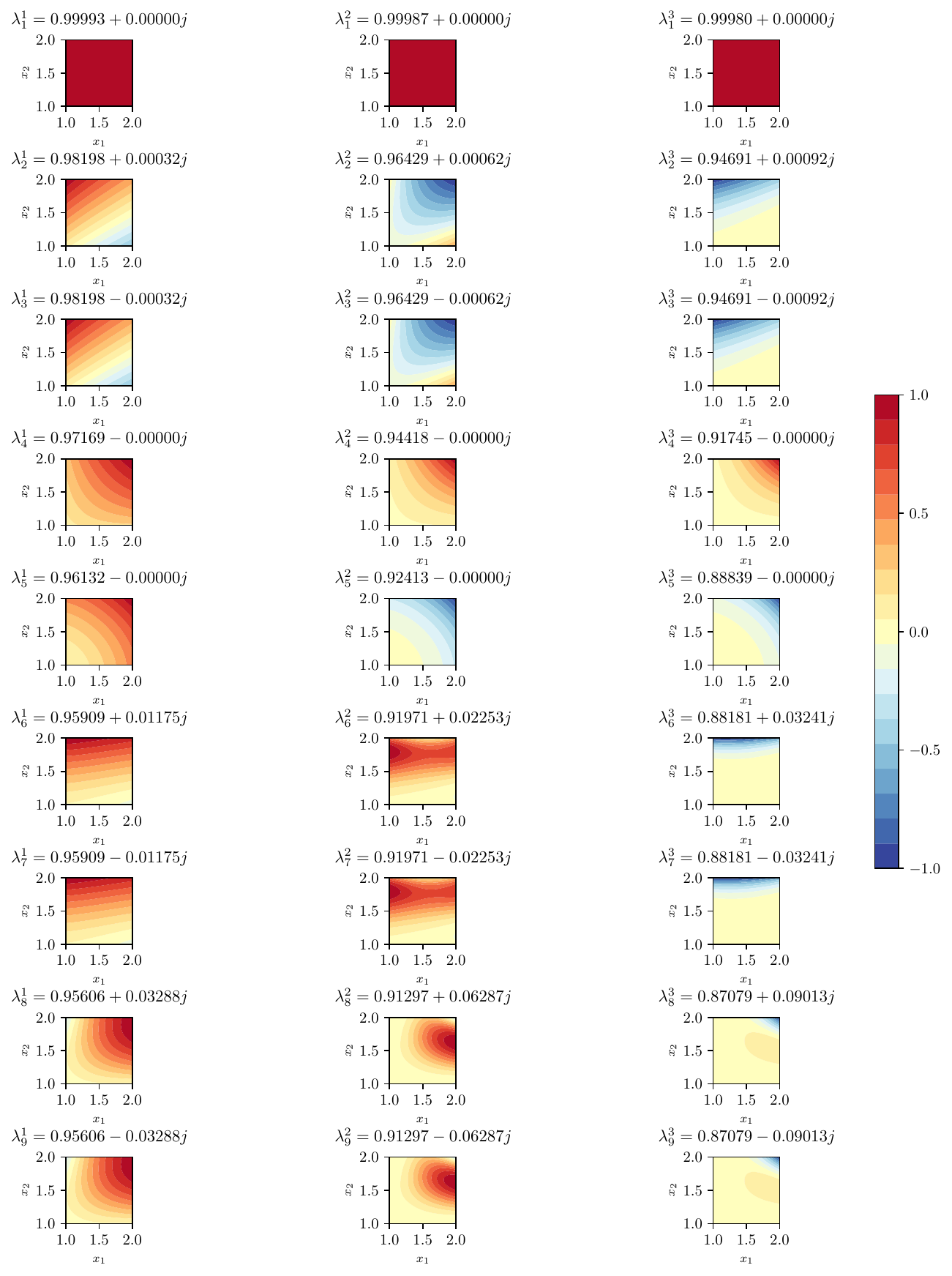}
    \caption{Extended eigenfunctions for the first 9 eigenvalues computed using Algorithm \ref{alg: cont_system_extend_with_bound} for the non-linear system (\ref{eq: nonlin_transformed_eqn}).}
     \label{fig: algorithm_extended_eigenfunctions_diffeo_system}
\end{figure}
\FloatBarrier
\section{
Separatrices for multistable systems}\label{sec:example - duffing}
Consider the unforced Duffing equation
\begin{align}\label{eq-Duffing}
\ddot{x} \, = \, - \delta \dot{x}  - x (\beta + \alpha x^2 ),  
\end{align}
with $\delta=0.5, \, \beta=-1, \, $ and $\alpha = 0.1$. For this setting, there exist two stable spirals at $ (\pm 3.1623, 0)$ and a saddle at the origin. Thus, nearly all initial conditions, excluding those on the stable manifold of the saddle point, will be attracted to one of the spiral equilibria. The stable manifold of the saddle point is plotted as a blue curve, which separates the basins of attraction of the two stable spirals. The unstable manifold of the saddle is depicted in orange, along which the two stable spirals lie (see Fig. \ref{fig:duffing}).
\begin{figure}[H]
    \centering
\includegraphics[width=0.63\textwidth]{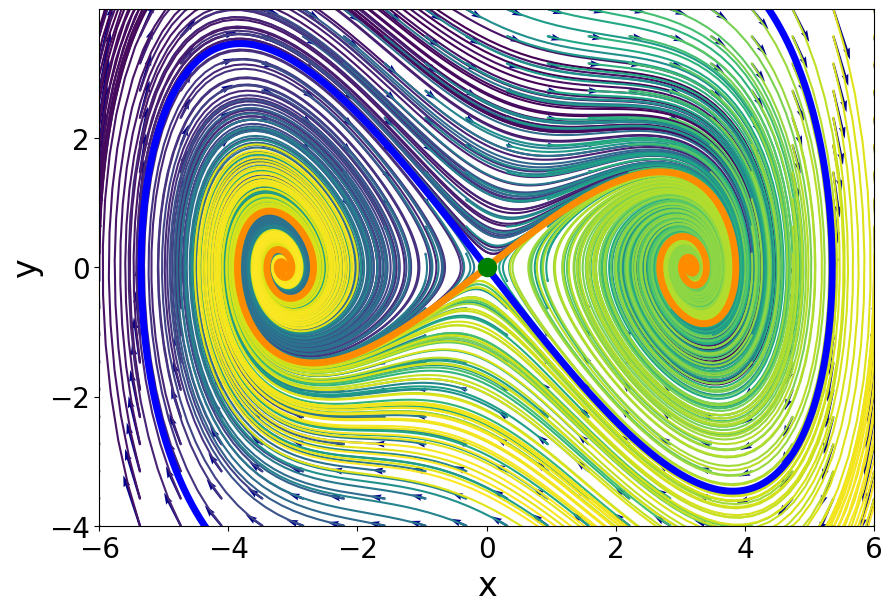}
    \caption{Phase portrait of the unforced Duffing system ($y=\dot{x}$), with two stable spirals and one saddle. The stable and unstable manifolds of the saddle point are depicted as blue and orange curves respectively. }
    \label{fig:duffing}
\end{figure}
\begin{figure}[H]
    \centering
\includegraphics[width=.83\textwidth]{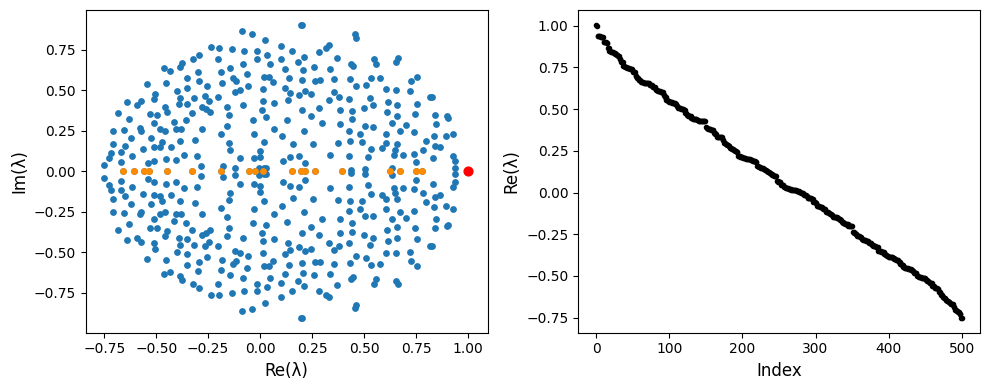}
    \caption{Koopman spectrum computed using EDMD for the unforced Duffing system \eqref{eq-Duffing}. Orange dots indicate real eigenvalues for which $Im(\lambda) = 0$.}
    \label{fig:Duff_spec}
\end{figure}
\begin{figure}[htbp]
    \centering
\includegraphics[width=1\textwidth]{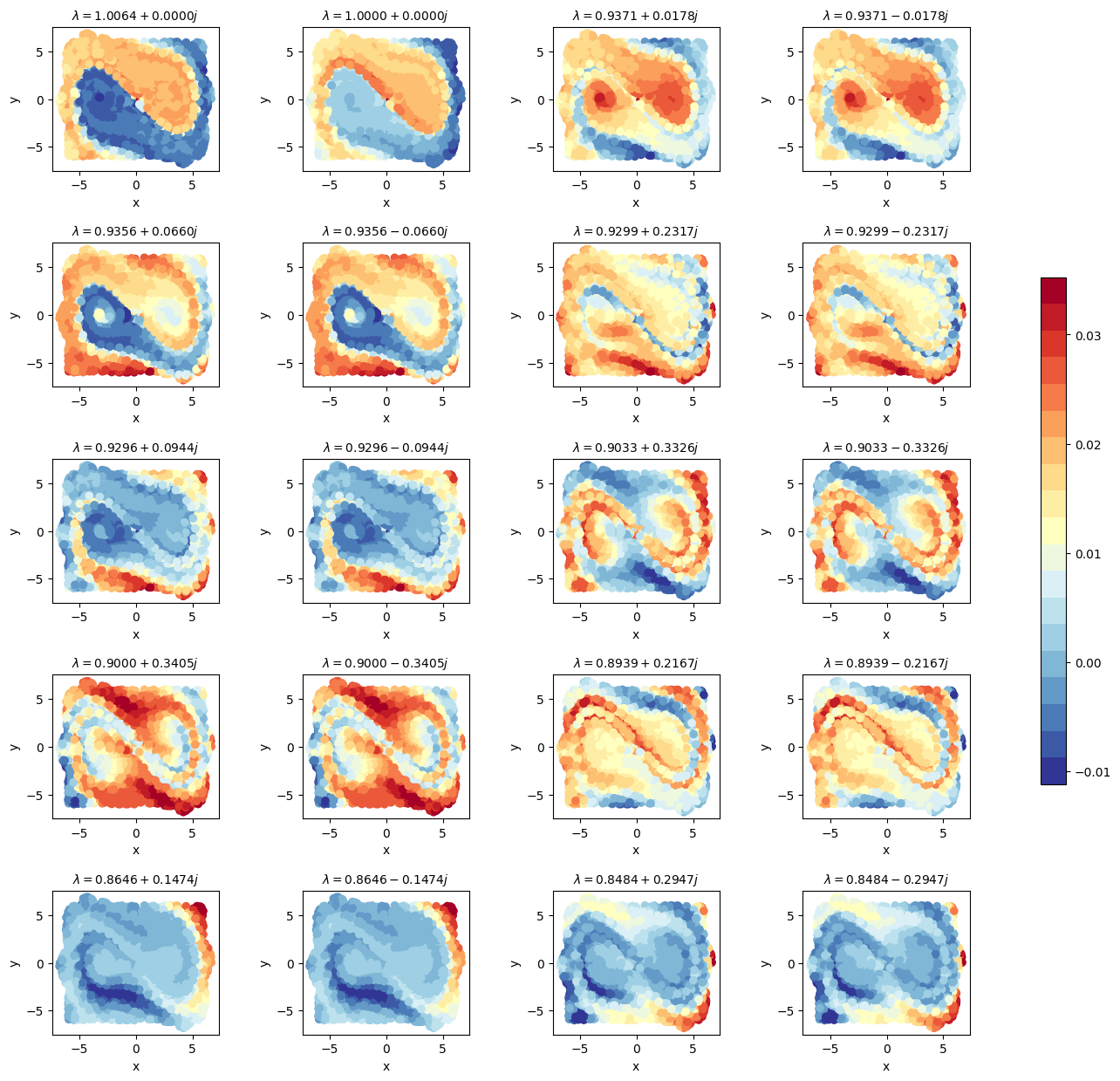}
    \caption{Pseudocolor plots of the first twenty eigenfunctions for the unforced Duffing system \eqref{eq-Duffing} approximated using EDMD.}
    \label{fig:eigenfunction_20}
\end{figure}
 We utilize EDMD to approximate the Koopman eigenfunctions associated with the attractors at $ (\pm 3.1623, 0)$. Following \cite{edmd:2015}, we use a dataset consisting of $3 \times 10^3$ trajectories, each with $11$ samples taken at a sampling interval of 
$\Delta t = 0.25$. The data points are represented as 
$X,Y \in \mathbb{R}^ 
{6\times 10^4}$
 , with initial conditions uniformly distributed over 
$x , \dot{x} = y \in [-6,6]$. We used a dictionary that included $500$ radial basis functions (RBFs). The RBF centers were selected using k-means clustering on the entire data set. Fig. \ref{fig:Duff_spec} and Fig. \ref{fig:eigenfunction_20} illustrate the Koopman spectrum and the first twenty eigenfunctions for the unforced Duffing system\eqref{eq-Duffing}. For the Koopman eigenfunction corresponding to the stable manifold, the magnitude and phase of the eigenfunction parameterize the basin of attraction for each stable spiral. But, the analytical eigenfunctions go to infinity at the boundary between the basins \cite{mauroy-2013}. 
\begin{figure}[H]
    \centering
\includegraphics[width=.55\textwidth]{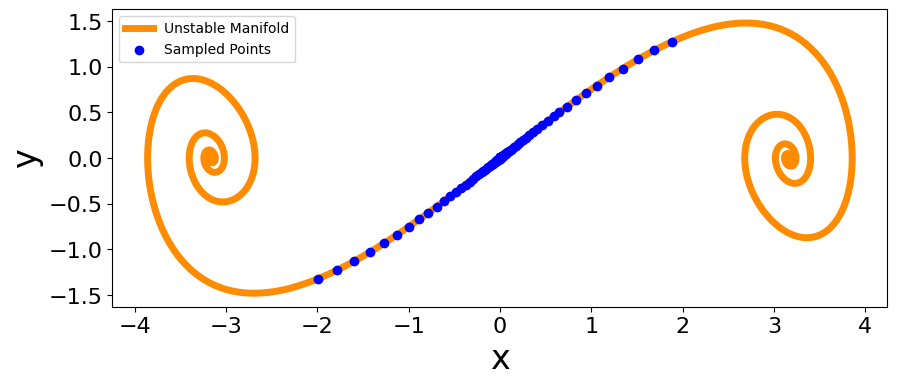}
    \caption{Sampled points $\mathcal{S}$ on the unstable manifold of the saddle points with $(x , y) \in W^u (0,0) \cap [-2,2] \times [-1.33, 1.3]$.}
    \label{fig:sampled_points}
\end{figure}
\begin{figure}[H]
    \centering
\includegraphics[width=1\textwidth]{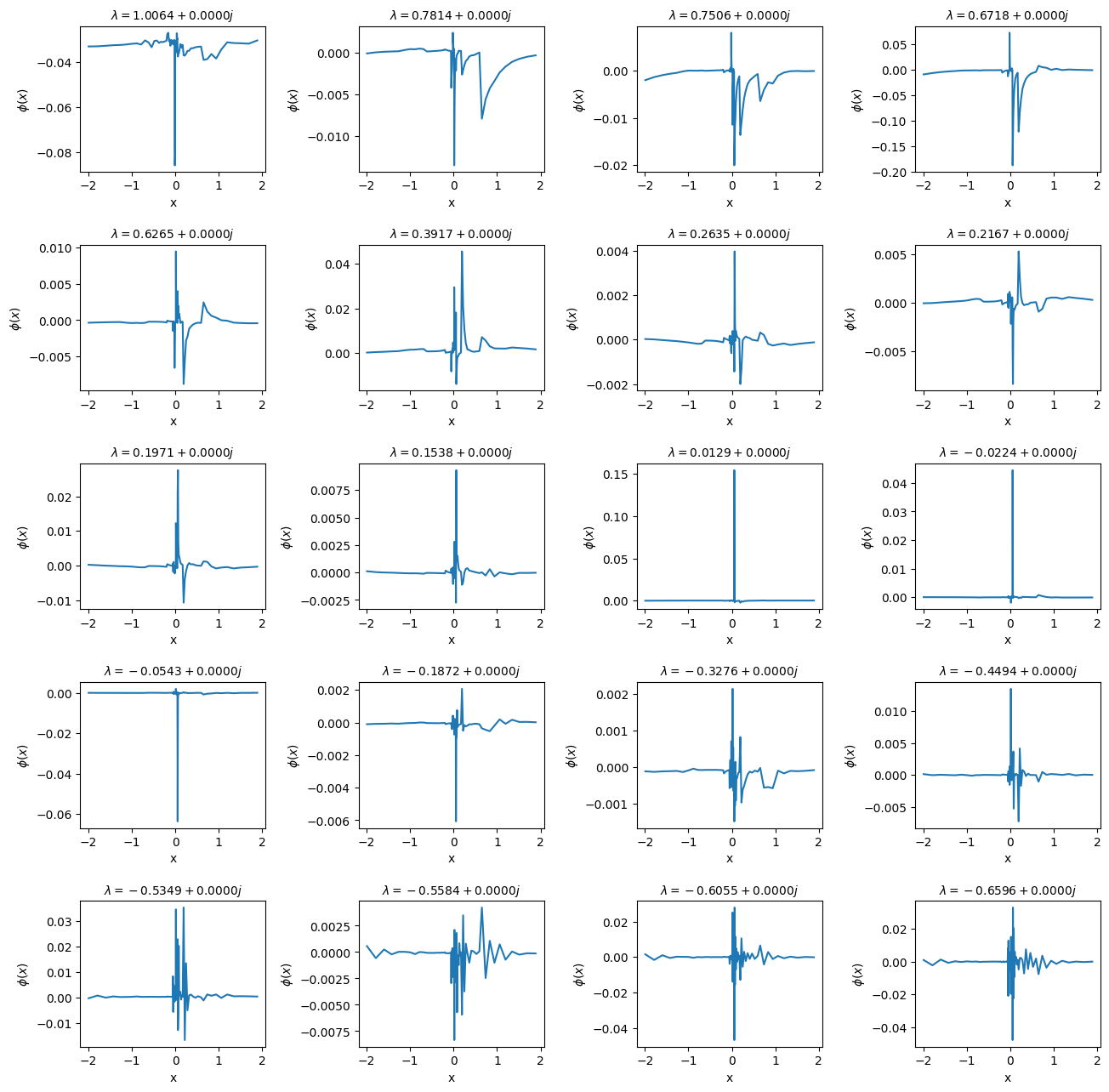}
    \caption{Evaluated eigenfunctions $\phi$ associated with real eigenvalues ($Im(\lambda) = 0$) except for $\lambda = 1$ over the set $\mathcal{S}$.}
    \label{fig:eigenfunction_inf}
\end{figure}
 Theoretically, the eigenfunctions associated with real eigenvalues also approach infinity along the unstable manifold at the saddle point. 
To show this, first we accurately sample the unstable manifold $W^u (0,0)$ around the saddle with $100$ points $ \mathcal{S} \, = \, \{ (x_i , y_i) \, : \, (x_i , y_i) \in W^u (0,0) \cap [-2,2] \times [-1.33, 1.3], \, i= 1, \ldots, 100 \}$ as shown in Fig.  \ref{fig:sampled_points}. Then, we evaluate the previously computed Koopman eigenfunctions (corresponding to the attractors) over the set $\mathcal{S}$. These evaluated eigenfunctions associated with real eigenvalues ( $Im(\lambda) = 0$) except for $\lambda = 1$, are plotted in Fig. \ref{fig:eigenfunction_inf}. 
\section{Additional figures for the bi-stable system \eqref{system:nonlinear2D} in Sect. \ref{sec:example - saddleSystem}}\label{appendix:fig_bistable}
\begin{figure}[htbp]
\centering
\includegraphics[width=.95\textwidth]{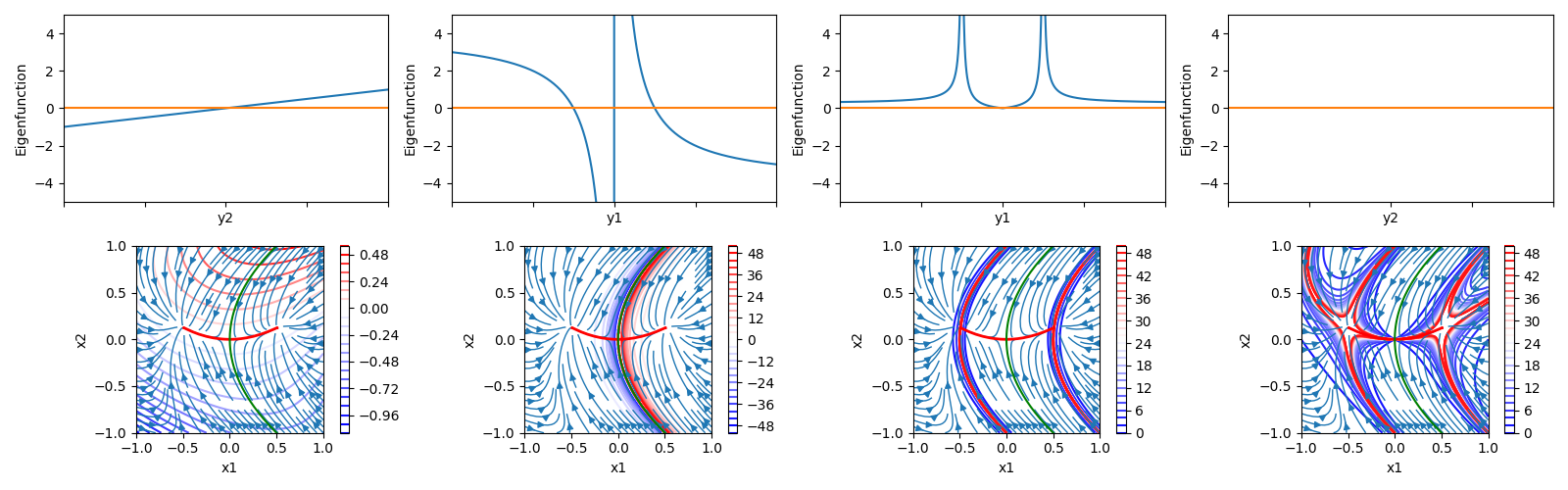}
\caption{Four Koopman eigenfunctions for the system \eqref{system:nonlinear2D}, plotted over the separatrix ($y_1$ coordinate) or the connection between the two steady states through the saddle ($y_2$ coordinate). From the left, the first function is zero on the connection and does not diverge. The second function diverges at the separatrix and is zero on the two steady states. The third function is the inverse of the second. The fourth function is the inverse of the product of the first and the second.}\label{fig_eigenfunctions_saddle_four}
\end{figure}

\end{document}